\documentclass{article}
\usepackage{amsfonts}
\usepackage{amssymb}
\usepackage{bbm}
\usepackage{comment}
\usepackage{enumitem}
\usepackage{hyperref}
\usepackage{dsfont}
\usepackage{amsthm}
\usepackage{amsmath}
\usepackage{commath}
\usepackage{url}
\usepackage{epsfig}
\usepackage{subfigure}
\usepackage{adjustbox}
\usepackage{mathrsfs}  
\usepackage[ruled,vlined,linesnumbered]{algorithm2e}
\usepackage{graphicx}
\usepackage{a4wide}
\usepackage{mathtools}

\makeatletter
\@namedef{subjclassname@2010}{%
	\textup{2010} Mathematics Subject Classification}
\makeatother

\usepackage[T1]{fontenc}
 
\DeclareMathOperator{\supp}{supp}

\DeclareMathOperator{\N}{\mathbb{N}}

\DeclareMathOperator{\bS}{\mathbb{S}}
\newtheorem{thm}{Theorem}
\newtheorem*{que}{Question}
\newtheorem*{thm*}{Theorem}
\newtheorem{lem}[thm]{Lemma}

\newtheorem{prop}[thm]{Proposition}
\newtheorem{cor}[thm]{Corollary}
\newtheorem{lemma}[thm]{Lemma}

\theoremstyle{definition}

\def \Zz {\mathbb{Z}}

\def \cS {\mathcal S}

\def \bl {\mathbf{\ell}}

\def \bl {\mathbb{L}}
\newcommand{\cA}{\mathcal{A}}

\newcommand{\cM}{\mathcal{M}}

\newcommand{\pr}{\mathbb{P}}
\newcommand{\p}{\mathcal{P}}
\newcommand{\A}{\mathcal{A}}

\usepackage{graphicx} % Required for inserting images

\title{The Prime times of twisted Diophantine approximation}
\author{Manuel Hauke}
\date{March 2025}

\begin{document}

\maketitle

\begin{abstract}
    The seminal work of Kurzweil (1955) provides for any fixed badly approximable $\alpha$ and monotonically decreasing $\psi$ a Khintchine-type statement on the set of the inhomogeneous real parameters $\gamma$ for which $\lVert n \alpha + \gamma\rVert \leq \psi(n)$ has infinitely many integer solutions, and further shows that the assumption of $\alpha$ being badly approximable is necessary. In this article, we generalize Kurzweil's statement to restricting $n \in \A $, where $\A \subseteq \mathbb{N}$ is a set with some multiplicative structure. We show that for badly approximable $\alpha$, the result of Kurzweil extends to a general class of sets $\A$, which allows us to establish the Kurzweil-type result in particular along the primes and along the sums of two squares. Furthermore, we construct non-trivial sets $\A$ where the assumption of $\alpha$ being badly approximable is necessary. In particular, this criterion applies to $\A$ being the set of square-free numbers, providing a novel characterization of the badly approximable numbers.\\
    These statements in particular allow for improving the best known bounds for $\lVert n \alpha + \gamma\rVert \leq \psi(n)$ for infinitely many $n \in \A$ for fixed badly approximable $\alpha$ and for various sets $\A$ of number-theoretic interest when accepting an exceptional set for $\gamma$ of Lebesgue measure $0$.
\end{abstract}

\section{Introduction}

One of the main goals in metric Diophantine approximation is to establish so-called Khintchine-type results, named after the famous Khintchine Theorem \cite{Khi_thm}: 

\begin{thm*}[Khintchine's Theorem (1924)]
Let $\psi: \N \to [0,\infty)$ be a monotonically decreasing function.
Writing
\[ W(\psi) := \left\{\alpha \in [0,1): \left\lVert n\alpha \right\rVert \leq \psi(n) \text{ for infinitely many } n \in \N \right\},
\]
where $\lVert.\rVert$ stands for the distance to the nearest integer, we have that

\begin{equation}\label{khin_condition}
\lambda(W(\psi))= \begin{cases}
0 &\text { if }  \sum\limits_{n \in \N} \psi(n) < \infty,\\
    1 &\text{ if } \sum\limits_{n \in \N}\psi(n) = \infty,
\end{cases}
\end{equation}
where $\lambda$ denotes the ($1$-dimensional) Lebesgue measure.
\end{thm*}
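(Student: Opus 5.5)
The convergence half is the easy direction and follows from the first Borel--Cantelli lemma. Write $E_n := \{\alpha \in [0,1): \lVert n\alpha\rVert \leq \psi(n)\}$. We may assume $\psi(n) < 1/2$, since $\sum \psi(n) < \infty$ forces $\psi(n) \to 0$. Then $E_n$ is a union of $n$ intervals of length $2\psi(n)/n$, one centred at each point $a/n$, so $\lambda(E_n) = 2\psi(n)$. If $\sum \psi(n) < \infty$, Borel--Cantelli gives $\lambda(\limsup E_n) = 0$, and $W(\psi) = \limsup E_n$. This half does not use monotonicity.

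For the divergence half I would reduce to coprime approximations and prove a quasi-independence-on-average estimate. Set $E_n' := \bigcup_{\gcd(a,n)=1} \{\alpha : |n\alpha - a| \leq \psi(n)\}$, which satisfies $\lambda(E_n') = 2\varphi(n)\psi(n)/n$. Monotonicity is used here. Since $\psi$ is decreasing, $\sum \psi(n) = \infty$ forces $\sum_{n} 2^n\psi(2^n) = \infty$ by Cauchy condensation. On each dyadic block, at least a positive proportion of the $n$ satisfy $\varphi(n)/n \gg 1$; for example one can use $\sum_{n \le N} \varphi(n)/n \sim 6N/\pi^2$. Combining these facts gives $\sum_n \lambda(E_n') = \infty$.

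The core step is the overlap estimate
\[ \lambda(E_m' \cap E_n') \ll \lambda(E_m')\lambda(E_n') \quad\text{for } m \neq n, \]
up to an error that is summable on average. I would establish it by counting the pairs $(a,b)$ with $|a/m - b/n| \leq \psi(m)/m + \psi(n)/n$. Writing $d = \gcd(m,n)$, the quantities $an - bm$ are multiples of $d$. Each nonzero value is attained by about $d$ pairs, and the coprimality conditions exclude the value $0$. This yields the bound $\ll \psi(m)\psi(n) + d\,\min(\psi(m)/m,\psi(n)/n)\cdot\mathbb{1}[\ldots]$. The main obstacle is controlling this $d$-dependent error term summed over $m,n \le N$. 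The clean approach is to use monotonicity to restrict to a sparse, well-spaced subsequence, or to bound the error by $\sum_{m<n} \psi(n)\,\gcd(m,n)/n$. The latter should be compared with $\left(\sum \psi(n)\right)^2$ using $\sum_{m<n}\gcd(m,n) \ll n\log n$ together with the dyadic regularity of $\psi$. If this bound is too weak, I would first replace $\psi$ by $\min(\psi(n), 1/(n^{1/2}))$-type truncations, or use the classical Cassels/Gallagher variant of the argument.

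Given quasi-independence on average, the Chung--Erdős (Kochen--Stone) lemma gives $\lambda(\limsup E_n') \geq c > 0$. To finish, I would apply Gallagher's (or Cassels') zero-one law for $\limsup E_n'$, which holds because these sets are invariant under the ergodic maps $\alpha \mapsto p\alpha$ modulo the usual dilation argument. This upgrades positive measure to full measure, and then $W(\psi) \supseteq \limsup E_n'$ gives $\lambda(W(\psi)) = 1$.
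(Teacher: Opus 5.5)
The paper does not prove this statement. Khintchine's theorem is quoted only as classical background (\cite{Khi_thm}), so your proposal has to stand on its own.

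Your convergence half is fine. The architecture of your divergence half (coprime sets $E_n'$, an overlap bound, Chung--Erd\H{o}s, a zero-one law) is the standard Duffin--Schaeffer/Gallagher route and is sound. The gap is the core step, which you never close. You posit an extra term $d\min(\psi(m)/m,\psi(n)/n)$, call controlling it the main obstacle, and offer fixes. The only concrete fix fails. Bounding the extra contribution by $\sum_{m<n\le N}\psi(n)\gcd(m,n)/n$ gives, since $\frac1n\sum_{m<n}\gcd(m,n)$ has average order $\log n$, something of size $\sum_{n\le N}\psi(n)\log n$. For $\psi(n)=1/(n\log n)$ this is of order $\log N$, while $(\sum_{n\le N}\psi(n))^2$ is of order $(\log\log N)^2$. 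The other suggestions (sparse subsequences, truncations) are not worked out and do not address this. Your stated pairwise target $\lambda(E_m'\cap E_n')\ll\lambda(E_m')\lambda(E_n')$ is also not what one can prove. The natural pairwise bound is of size $\psi(m)\psi(n)$, which exceeds $\lambda(E_m')\lambda(E_n')$ by a factor of order $mn/(\varphi(m)\varphi(n))$, and that factor is unbounded.

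The missing observation is that there is no error term at all. Take $\psi\le 1/2$ (replace $\psi$ by $\min(\psi,1/2)$), fix $m\neq n$, and put $d=\gcd(m,n)$ and $X=n\psi(m)+m\psi(n)$. The arc of radius $\psi(m)/m$ at $a/m$ meets the arc of radius $\psi(n)/n$ at $b/n$ only if $an-bm\equiv j \pmod{mn}$ with $|j|\le X$ and $d\mid j$. Each such residue is hit by at most $d$ pairs $(a,b)$, and each intersection has measure at most $2\min(\psi(m)/m,\psi(n)/n)$. Because coprimality excludes $j=0$, there are at most $2\lfloor X/d\rfloor\le 2X/d$ admissible $j$, with no $+1$. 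Hence
\[
\lambda(E_m'\cap E_n')\le 4X\min\Bigl(\frac{\psi(m)}{m},\frac{\psi(n)}{n}\Bigr)\le 8\psi(m)\psi(n).
\]
Even if you count a $+1$, it only arises when $d\le X$, and then $d\min(\cdot)\le X\min(\cdot)\le 2\psi(m)\psi(n)$, so your extra term is harmless.

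Monotonicity then enters exactly once. Your dyadic argument gives not merely divergence but the comparison $\sum_{n\le N}\psi(n)\ll\sum_{n\le N}\lambda(E_n')+O(1)$. Therefore $\sum_{m,n\le N}\lambda(E_m'\cap E_n')\ll\sum_{n\le N}\lambda(E_n')+(\sum_{n\le N}\lambda(E_n'))^2$, and Chung--Erd\H{o}s yields $\lambda(\limsup E_n')\ge c>0$.

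For the final upgrade, it is simplest to apply Cassels' zero-one law \cite{cass01} directly to $W(\psi)\supseteq\limsup E_n'$. Alternatively, localize the Chung--Erd\H{o}s bound to intervals and invoke Lebesgue density. That is exactly the mechanism of Lemma \ref{ramirez_lem}, which the paper uses in the twisted setting where no zero-one law is available.
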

This result was generalized by Sz\"usz \cite{Szusz} to the inhomogeneous setup, establishing \eqref{khin_condition} for $W(\psi,\gamma)$ where 
\[ W(\psi,\gamma) := \left\{\alpha \in [0,1): \left\lVert n\alpha + \gamma \right\rVert \leq \psi(n) \text{ for infinitely many } n \in \N \right\}.
\]
While the theorems of Khintchine and Szüsz fix the inhomogeneous parameter $\gamma$ and take $\alpha$ drawn uniformly at random, 
the setup of ``twisted Diophantine approximation'' has recently gained a lot of interest in various aspects (see e.g. \cite{mosh1, mosh2, bdgw, Bsv_mani, chow_zhou, fuchs, mosh3, mum1, kim, kim2, mosh4, mosh5, simmons} for works in twisted Diophantine approximation in the last decade) :
Here, one fixes the parameter $\alpha$, and lets $\gamma$ be drawn uniformly at random. More formally, given a monotonically decreasing function $\psi: \N \to [0,\infty)$ and fixed $\alpha \in [0,1)$, we define
\[T(\psi,\alpha) := \{\gamma \in [0,1): \lVert n\alpha + \gamma \rVert \leq \psi(n) \text{ for infinitely many } n \in \N\}.\]
This can be translated into a question of ``shrinking targets'' (see e.g., the foundational work of Hill and Velani \cite{hv}) in dynamical systems, i.e., we ask about
the visits of orbits $(R_{\alpha}^n(\gamma))_{n \in \N}$ to a shrinking neighborhood around $\alpha$ for generic starting points $\gamma$, where $R_{\alpha}$ denotes the irrational rotation $x \mapsto x + \alpha \pmod 1$. 
In this language, the set $T(\psi,\alpha)$ above describes exactly those $\gamma$ where 

\[\sum_{n \in \N}\mathds{1}_{[R_{\alpha}^n(\gamma) \in B(0,\psi(n))]} = \infty,\]
where $B(x,\rho)$ denotes the ball with center $x$ and radius $\rho$. In this sense, the twisted setup is from a dynamical perspective more natural than the Khintchine setup, since here the dynamical system $R_{\alpha}$ is fixed and the averaging happens only for the starting point, whereas in the Khintchine setup, the averaging happens over the dynamical systems.\\

A naturally arising question is to study an analogue of \eqref{khin_condition}, i.e., to classify pairs $(\psi,\alpha)$ such that

\begin{equation}\label{twisted_BC}
\lambda(T(\psi,\alpha))= \begin{cases}
0 &\text { if }  \sum\limits_{q \in \N} \psi(q) < \infty,\\
    1 &\text{ if } \sum\limits_{q \in \N}\psi(q) = \infty.
\end{cases}
\end{equation}
 As in \eqref{khin_condition}, the convergence Borel--Cantelli-Lemma implies immediately that for all $\alpha \in \mathbb{R}$ and all $\psi$ with $\sum_{n \in \mathbb{N}}\psi(n) < \infty$, \eqref{twisted_BC} holds true. Thus, the question of \eqref{twisted_BC} can be reduced to the set of functions 
 
\[\cM := \left\{\psi: \N \to [0,\infty) \text{ mon. decr., } \sum_{n \in \N} \psi(n) = \infty\right\}\]
and the sets
\[\mathcal{K}(\psi) := \{\alpha \in [0,1): \lambda(T(\psi,\alpha)) = 1\}.\]
The seminal paper by Kurzweil \cite{kurzweil} shows in a beautiful way that the question whether \eqref{twisted_BC} holds depends delicately on the Diophantine properties of $\alpha$:

\begin{thm*}[Kurzweil's Theorem (1955)]
\item
\begin{itemize}
    \item[(i)] For any $\psi \in \cM$ we have $\lambda(\mathcal{K}(\psi)) = 1$. 
    \item[(ii)] \[\bigcap_{\psi \in \cM}\mathcal{K}(\psi) = BAD,\]
    where
    $BAD := \{\alpha \in \mathbb{R}: \liminf_{n \to \infty} n \lVert n \alpha\rVert > 0\}$
    denotes the set of badly approximable numbers.
    \item[(iii)] In particular,
$\bigcap_{\psi \in \cM}\mathcal{K}(\psi)$ has Lebesgue measure zero but Hausdorff dimension $1$.
\end{itemize}
\end{thm*}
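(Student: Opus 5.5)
The plan is to prove (ii) first, split into the two inclusions $BAD \subseteq \mathcal{K}(\psi)$ for every $\psi \in \cM$ and $\bigcap_{\psi}\mathcal{K}(\psi) \subseteq BAD$. Part (i) will then follow from a separate metric argument, and (iii) is a corollary.

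\emph{Step 1: $BAD\subseteq \mathcal{K}(\psi)$.} Fix $\alpha \in BAD$ and $\psi\in\cM$, and set $E_n := \{\gamma: \lVert n\alpha+\gamma\rVert\le \psi(n)\}$, so $\lambda(E_n)=2\psi(n)$ (capping $\psi\le 1/2$). I would use a quasi-independence version of the divergence Borel--Cantelli lemma (Erd\H{o}s--R\'enyi / Chung--Erd\H{o}s). This requires the bound
\[\sum_{m,n\le N}\lambda(E_m\cap E_n)\le C\Big(\sum_{n\le N}\psi(n)\Big)^2 + C\sum_{n\le N}\psi(n).\]
Here $\lambda(E_m\cap E_n)\le 2\min(\psi(m),\psi(n))\,\mathds 1[\lVert (n-m)\alpha\rVert\le \psi(m)+\psi(n)]$. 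For fixed $m<n$ with $n-m=k$, monotonicity gives the bound $\psi(m)\cdot\mathds 1[\lVert k\alpha\rVert\le 2\psi(m)]$. The badly-approximable property gives a three-distance/discrepancy estimate: the number of $k\le K$ with $\lVert k\alpha\rVert\le \delta$ is $\ll K\delta+1$, uniformly. The main obstacle is that this ``$+1$'' term gives $\sum_m \psi(m)$ times the number of $k$ up to the scale $1/\psi(m)$. I would handle it with dyadic blocks, grouping $m$ by $\psi(m)\approx 2^{-j}$ and using the gap property of $BAD$: $\lVert k\alpha\rVert\le\delta$ forces $k\gg 1/\delta$. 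With this, the count of $k\le N$ with $\lVert k\alpha\rVert\le 2\psi(m)$ is $\ll N\psi(m)$ whenever $N\psi(m)\gg1$, and otherwise it is $0$. Summing then gives the quasi-independence bound. Positive measure of $\limsup E_n$ follows, and full measure comes from a zero-one law. The set $T(\psi,\alpha)$ is invariant under $\gamma\mapsto\gamma+\alpha$ up to replacing $\psi(n)$ by $\psi(n+1)$. To get genuine invariance, I would handle this by a standard sandwich in which $\psi$ is replaced by $c\psi$, and then use ergodicity of $R_\alpha$ (or Cassels' lemma on $\limsup$ of intervals with scaled radii).

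\emph{Step 2: non-BAD $\alpha$ fail for some $\psi$.} If $\alpha\notin BAD$, choose convergent denominators $q_k$ with $q_k\lVert q_k\alpha\rVert\to0$ rapidly, along a sparse subsequence. On $[q_k, q_{k+1})$ the points $n\alpha$ for $n$ in a block of length $\approx q_{k+1}$ essentially lie on $q_k$ short arcs of length $\approx q_{k+1}\lVert q_k\alpha\rVert$. Let $\psi$ be roughly constant $\approx 1/(q_k \cdot$ slowly growing$)$ on such blocks, so that $\sum\psi$ diverges. The union $\bigcup_{n\in\text{block}}E_n$ is then contained in $q_k$ intervals of total length $\ll q_k(\psi+q_{k+1}\lVert q_k\alpha\rVert)$, which can be made summable in $k$. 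Convergence Borel--Cantelli then gives $\lambda(T(\psi,\alpha))=0$. The delicate point is choosing $\psi$ monotone across blocks while keeping $\sum\psi=\infty$, which needs the sparse subsequence with $q_{k+1}\lVert q_k\alpha\rVert$ tiny.

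\emph{Steps 3--4: parts (i) and (iii).} For (i), apply Fubini to $\{(\alpha,\gamma)\}$: by Sz\"usz's inhomogeneous theorem, for each $\gamma$ almost every $\alpha$ lies in $W(\psi,\gamma)$. Hence almost every $\alpha$ has $\lambda(T(\psi,\alpha))=1$, using the zero-one law from Step 1 for fixed irrational $\alpha$. For (iii), the intersection equals $BAD$ by (ii), and $BAD$ is classically null (by Khintchine) and of full Hausdorff dimension (Jarn\'ik).
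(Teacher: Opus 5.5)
Your Steps 1 and 2 each contain a concrete error; Steps 3 and 4 are fine.

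\textbf{Step 1: the pair-correlation estimate does not give quasi-independence.} For $m<n$ you bound $\lambda(E_m\cap E_n)$ by $\psi(m)\,\mathds{1}[\lVert (n-m)\alpha\rVert\le 2\psi(m)]$ and then count all $k=n-m\le N$. The best this can yield is $\sum_{m\le N}\psi(m)\cdot N\psi(m)$, which is genuinely too large. Take $\psi(n)=1/(2n)$ and $\alpha\in BAD$. For $m\le cN$ the number of $k\le N/2$ with $\lVert k\alpha\rVert\le 2\psi(m)$ really is $\asymp N\psi(m)$. So the quantity you are bounding is $\asymp N$, whereas $(\sum_{n\le N}\psi(n))^2\asymp(\log N)^2$. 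The ``$+1$'' term is not the obstacle, and grouping $m$ by the value of $\psi(m)$ does not help.

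The fix is to make the measure factor the smaller radius $\min(\psi(m),\psi(n))=\psi(n)$, let only the window be governed by $\psi(m)$, and count $n$ only at the scale where $\psi(n)$ lives. Localize both indices dyadically, $m\in[2^k,2^{k+1})$ and $n\in[2^\ell,2^{\ell+1})$ with $k\le\ell$. Then $\lambda(E_m\cap E_n)\le 2\psi(2^\ell)\,\mathds{1}[\lVert (n-m)\alpha\rVert\le 2\psi(2^k)]$. For $\alpha\in BAD$, the number of $n\neq m$ in the $\ell$-th block with $\lVert (n-m)\alpha\rVert\le 2\psi(2^k)$ is $\ll_\alpha 2^\ell\psi(2^k)$, and by the gap property it is $0$ unless $2^\ell\psi(2^k)\gg_\alpha 1$. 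This gives $\ll 2^k\psi(2^k)\cdot 2^\ell\psi(2^\ell)$ per pair of blocks, hence $O\bigl((\sum_{n\le N}\psi(n))^2+\sum_{n\le N}\psi(n)\bigr)$ overall. This is exactly the computation in the proof of Theorem~\ref{thm_pos_dens}.

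Your passage from positive to full measure is correct, and for $\mathcal{A}=\mathbb{N}$ it is simpler than the paper's route. Monotonicity gives $T(\psi,\alpha)+\alpha\subseteq T(\psi,\alpha)$ outright, so no $c\psi$-sandwich or Cassels' lemma is needed. Hence $T(\psi,\alpha)$ is $R_\alpha$-invariant up to a null set, and ergodicity gives the zero-one law. The paper instead needs the local-density Lemma~\ref{ramirez_lem}, because for general $\mathcal{A}$ a zero-one law can fail.

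\textbf{Step 2: the blocks are far too long.} For $n$ in an interval of length $L$, the points $n\alpha$ with $n$ in a fixed residue class mod $q_k$ drift by about $(L/q_k)\lVert q_k\alpha\rVert$. Any block reaching the convergent denominator $q'$ that follows $q_k$ (as $[q_k,q_{k+1})$ does) therefore produces arcs of length $\gg 1/q_k$. The points on these arcs are spaced by $\lVert q_k\alpha\rVert<1/q'$, far below your radius $\psi\approx 1/(q_k\cdot\text{slowly growing})$. So the union of the $E_n$ over each block has measure bounded below by an absolute constant, and convergence Borel--Cantelli is unavailable. Correspondingly, the condition you say you need can never hold: $q_{k+1}\lVert q_k\alpha\rVert\ge q'\lVert q_k\alpha\rVert>1/2$ always. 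Your per-arc length $q_{k+1}\lVert q_k\alpha\rVert$ also overstates the drift by a factor $q_k$.

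The missing idea is to stop each block at a slowly growing multiple of $q_k$, tied to how small $q_k\lVert q_k\alpha\rVert$ is. As in Proposition~\ref{counterex_non_bad} with $f\equiv 1$:
\begin{itemize}
\item pick convergent denominators $q=q_{n_k}$ with $q\lVert q\alpha\rVert\le 4^{-k}$;
\item set $S_k=[1,2^kq]\setminus\bigcup_{j<k}S_j$ and $\psi\equiv 2^{-k}/q$ on $S_k$.
\end{itemize}
For $n=m+jq$ with $0\le m<q$ and $j\le 2^k$, the point $n\alpha$ lies within $O(2^{-k}/q)$ of $\frac{1}{q}\mathbb{Z}$ modulo $1$. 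So $\bigcup_{n\in S_k}E_n$ is contained in $q$ intervals of length $O(2^{-k}/q)$, of total measure $O(2^{-k})$. At the same time $\sum_{n\in S_k}\psi(n)\ge 1/2$, and $\psi$ is automatically decreasing.

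Steps~3 and~4 follow the route the paper indicates: Fubini with the inhomogeneous Khintchine theorem, then Khintchine and Jarn\'ik. In Step~3 you do not even need the zero-one law, since $\int_0^1\lambda(T(\psi,\alpha))\,\mathrm{d}\alpha=1$ already forces $\lambda(T(\psi,\alpha))=1$ for almost every $\alpha$.
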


In the present article, we examine this question by restricting $n$ to be in certain subsets $\A \subseteq \mathbb{N}$.
We define
\[W_{\A}(\psi,\gamma) := \left\{\alpha \in [0,1): \left\lVert n\alpha + \gamma \right\rVert \leq \psi(n) \text{ for infinitely many } n \in \A \right\},
\]
where $\psi$ is a function in

\[\cM_{\cA} := \left\{\psi: \N \to [0,\infty) \text{ mon. decr.}, \sum_{n \in \cA} \psi(n) = \infty\right\}.\]

Note that in the classical Khintchine setup, this question has been extensively studied by various illustrious researchers, providing analogues to \eqref{khin_condition} along certain sets $\mathcal{A}$, i.e., proving in various situations that for a fixed $\gamma$, we have

\begin{equation}\label{khin_sub}
\psi \in \mathcal{M}_{\A} \implies \lambda(W_{\A}(\psi,\gamma)) = 1.\end{equation}
For example, the Duffin--Schaeffer Theorem \cite{duff} (not to be confused with the Koukoulopuolos--Maynard Theorem \cite{km}, formerly known as the Duffin--Schaeffer Conjecture) implies that for sets $\A$
where in an averaged sense over $\A$, $\varphi(n)/n$ is bounded away from zero, \eqref{khin_sub} holds true for $\gamma = 0$. This is in particular satisfied for $\A$ being the set of primes, or for any set $\A$ that has positive lower density in $\N$. Further sequences where \eqref{khin_sub} was established for arbitrary $\gamma$ include polynomials with integer coefficients \cite{schmidt_poly} as well as lacunary sequences \cite[Chapter 3]{harm_book}. Note however, that \eqref{khin_sub} does not hold for an arbitrary infinite set $\A$: While the Counterexample of Duffin and Schaeffer \cite{duff} and the inhomogeneous generalization due to Ram\'irez \cite{ram_counter} already gives an indication about that, an explicit proof of this fact can be found in the recent work \cite{chpr}. We also remark that there is a complicated criterion formulated by Catlin \cite{catlin}, which was confirmed by Koukoulopoulos and Maynard \cite{km}, which provides a theoretical answer to whether or not \eqref{khin_sub} holds for $\gamma = 0$. However, this condition is not a clean Khintchine-type statement, and is in many instances essentially impossible to check.\\

The purpose of the present article is to study the twisted  concept for subsequences of the integers: For a fixed infinite set $\mathcal{A} \subseteq \mathbb{N}$, instead of \eqref{khin_sub}, we ask about necessary and sufficient conditions on $\alpha$ such that

\begin{equation}\label{twist_sub}
\psi \in \mathcal{M}_{\A} \implies \lambda(T_{\A}(\psi,\alpha)) = 1,\end{equation}
where

\[T_{\A}(\psi,\alpha) := \{\gamma \in [0,1): \lVert n\alpha + \gamma \rVert \leq \psi(n) \text{ for infinitely many } n \in \A\}.\]
More precisely, defining
\[\mathcal{K}_{\A}(\psi) := \{\alpha \in [0,1): \lambda(T_{\A}(\psi,\alpha)) = 1\},\]
 we study the question of whether the analogues of (i) - (iii) of Kurzweil's Theorem hold true for various sets $\A$.\\

In dynamical language, \eqref{twist_sub} asks for 
\[\sum_{n \in \A}\mathds{1}_{[R_{\alpha}^n(\cdot) \in B(0,\psi(n))]} = \infty \text{ almost surely},\]
and thus can be interpreted as the shrinking target problem only stopped at certain times.
Such study can be compared to the seminal works of Bourgain, where for arbitrary measure-preserving dynamical systems $(T,\mathcal{B},\mu,\Omega)$, the convergence of 
\[\frac{1}{\#\A \cap [0,N]}\sum_{\substack{n \in \A\\n \leq N}}f(T^n(\cdot))\]
for $f \in L^p, p > 1$ was considered. This was considered in particular for $\A$ being the set of values of an integer polynomial \cite{bour1}, as well as for $\A$ being the prime numbers \cite{bour2}, and has since been generalized in various setups. The case of prime numbers will also take a central role in this article, but is of course of a different flavor: In our case, the target window is shrinking, but the dynamical system is specialized to be the irrational rotation.\\

Coming back to the analogues of (i) - (iii) in Kurzweil's Theorem for various sets $\A$, we start to consider the analogue of (i), i.e., for fixed $\psi \in \mathcal{M}_{\A}$, does \eqref{twist_sub} hold for Lebesgue-a.e. $\alpha$? It turns out that this is true for \textit{all} infinite sets $\A$ by an application of the doubly inhomogeneous Khintchine Theorem due to Cassels \cite{cas_book}, and a standard application of Fubini's Theorem.
In the past, researchers \cite{hr_twist, kp25} tried to generalize this by replacing the Lebesgue measure on the rotation parameter by other probability measures $\mu$, asking for $\mu(\mathcal{K}_{\A}(\psi)) = 1$ for (specific) $\psi \in \mathcal{M}_{\A}$. Furthermore, the special case of $\psi(n) = \frac{1}{n}, \A = \mathbb{P}$ was considered in recent work of the author with Kowalski \cite{HK}.
The main aim of this article is to study analogues of (ii) and (iii) from Kurzweil's Theorem for various sets $\A$, which, to the best of the author's knowledge, has not been studied prior to this manuscript. The class of sets $\A$ where our analysis will be applicable will include prime numbers and sieve-theoretic generalizations thereof, as well as sequences with positive lower density.

\subsection{Kurzweil's Theorem on prime numbers and generalizations}

The examination of the prime rotation $(p\alpha)_{p \in \mathbb{P}}$ has a rich history, dating back to the foundational works of Vinogradov, Rhin, and Vaughan \cite{vino_primes,rhin,vaughan_primes} who showed that for every irrational $\alpha$, the sequence $(p\alpha)_{p \in \mathbb{P}}$
equidistributes in $[0,1)$. Further, it was proven that for every irrational $\alpha$, we have
\begin{equation}\label{prime_approx}\lVert p \alpha + \gamma \rVert \leq p^{-\tau + \varepsilon} \text{ for i.m. }p \in \mathbb{P},\end{equation}
where Vinogradov \cite{vino_primes} proved $\tau = 1/5$, which was then improved to $\tau = 1/4$ by Vaughan \cite{vaughan_primes}. This result was further strengthened by Harman \cite{harm_primes, harm_primesII} and Jia \cite{JiaI,JiaII} with the current record being $\tau = 9/28$. In the case of $\gamma = 0$, this was further improved by 
Heath--Brown and Jia \cite{HB_Jia}, with the recent best bound $\tau = 1/3$ established by Matomäki \cite{mat_primes}.\\

Similar to the primes, Diophantine approximation with denominators in
\[\mathbb{S}_2 := \{n \in \mathbb{N}: \exists k,\ell \in \mathbb{N}_0: n = k^2 + \ell^2\}\]
has been studied. This was first treated by Cook \cite{C72} (for recent advancements in this topic, see \cite{BaiI,BaiII}) who showed that for all irrational $\alpha$,
\begin{equation}\label{s2_approx}\lVert n \alpha\rVert \leq n^{-\tau + \varepsilon} \text{ for i.m. } n \in \bS_2\end{equation}
where $\tau = 1/2$.
From a sieve-theoretic point of view, $\mathbb{P}$ and $\bS_2$ are related objects since the classical Theorem on the sum of two squares characterizes 
\begin{equation}\label{s2_char} n \in \bS_2 \quad \Longleftrightarrow \quad \forall p^{k}\Vert n \text{ such that } p \equiv 3 \pmod 4,\; k \text{ is even}.\end{equation}
Note that $\bS_2$ is actually denser than the primes: Proven by Landau \cite{landau}, we have 
\[\#\{n \leq N: n \in \bS_2\} \sim c\frac{N}{\sqrt{\log N}}\]
with $c = 0.7642\ldots$ denoting the Landau--Ramanujan constant. 
While some classical analytic tools established for the prime numbers (such as convolution identities via the von Mangoldt function) are not available for $\bS_2$, the latter is actually easier to handle from a sieve-theoretic perspective, since \eqref{s2_char} gives rise to a $1/2$-dimensional sieve compared to the $1$-dimensional sieve that is usually employed in the prime case.\\

The sieve perspective is the point of view we take in this article: We treat both sets $\mathbb{P},\mathbb{S}_2$ as special instances of a more general sieve-theoretic setup. The first theorem of this article allows us to make a statement about twisted Diophantine approximation on certain sieve-theoretic sets that include $\mathbb{P}$ and $\bS_2$ as special cases (see Corollaries \ref{prime_cor} and \ref{s2_cor} below).

\begin{thm}\label{thm_prime_like}
Let $\A \subset \N$ be an infinite set of integers so that there exists $\p \subseteq \pr, \varepsilon,\delta,\rho > 0$ with:

\begin{itemize}
 \item[(I)] For all sufficiently large $k$, we have
 \[n \in \A \cap [2^k,\infty] \implies \forall p \in \p \cap [0,2^{\rho \cdot k}]: p \nmid n.\]
    \item[(II)] \[\frac{1}{(\log x)^{\delta}} \gg \frac{\#\{x \leq n < 2x: n \in \A\}}{x} \asymp \prod_{\substack{p \in \p\\p \leq x}}\left(1 - \frac{1}{p}\right).\]
    \item[(III)] For all $\alpha \in BAD$, we have the discrepancy estimate
    \[\sup_{0 \leq a < b \leq 1} \left\vert\#\{n \in \A \cap [0,x]: \{n\alpha\} \in [a,b]\} - \#\{n \in \A \cap [0,x]\} \cdot (b-a)\right\vert \ll_{\alpha} x^{1- \varepsilon}.\]
    Assuming (I), (II), and (III), we have

    \begin{equation}\label{kw_ii_sieve}
        BAD \subseteq \bigcap_{\psi \in \mathcal{M}_{\A}}\mathcal{K}_{\A}(\psi) \subseteq BAD_{\mathcal{P}}
    \end{equation}
    where 
    \[BAD_{\mathcal{P}} := \{
    \alpha: \liminf_{n \to \infty} n \lVert n \alpha\rVert f_{\p}(n) > 0
    \}, \quad \text{with}\quad f_{\p}(n) := \prod_{\substack{p \leq n\\p \in \p}}\left(1 +\frac{1}{p}\right).\]
    In particular, $\bigcap_{\psi \in \mathcal{M}_{\A}}\mathcal{K}_{\A}(\psi,\alpha)$ has Lebesgue measure $0$, but Hausdorff dimension $1$. 
\end{itemize}
\end{thm}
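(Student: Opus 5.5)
We prove the two inclusions in \eqref{kw_ii_sieve} separately and then deduce the measure/dimension statement from them.

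\textbf{First inclusion.} Fix $\alpha \in BAD$ and $\psi \in \mathcal{M}_{\A}$. We follow Kurzweil's strategy via a quasi-independence (Chung--Erd\H{o}s / Kochen--Stone type) Borel--Cantelli argument on dyadic blocks. Let $E_n := \{\gamma: \lVert n\alpha+\gamma\rVert \le \psi(n)\}$ for $n\in\A$, so that $\lambda(E_n) = 2\psi(n)$.

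\emph{Regularization.} Using monotonicity of $\psi$ and (II), we may first reduce to the case $\psi(n) \ge 1/(n(\log n)^{C})$ and $\psi(n)\le 1/\log n$. To do this, replace $\psi$ by $\max(\psi,\,1/(n\,\#\A\cap[1,n]))$-type adjustments; this is standard, since divergence of $\sum_{n\in\A}\psi(n)$ is governed by $\sum_k \psi(2^k)\#(\A\cap[2^k,2^{k+1}))$.

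\emph{Pairwise overlaps.} The overlap $\lambda(E_m\cap E_n)$ is essentially $\lambda$ of the set of $\gamma$ lying in two intervals whose centers differ by $\lVert (m-n)\alpha\rVert$. Summing over pairs $m<n$ in a block, we must count pairs $(m,n)\in\A^2$ with $\lVert (n-m)\alpha\rVert \le 2\psi(m)$. There are two regimes.
\begin{itemize}
\item[(a)] For differences $h=n-m$ that are large compared with $1/\psi$, we use (III) in the form of discrepancy of $\{n\alpha\}$ for $n\in\A$ relative to the shifted point $\{m\alpha\}$. For fixed $m$, the number of $n\in\A\cap[2^k,2^{k+1})$ with $\{n\alpha\}$ in an interval of length $4\psi$ around $-\{m\alpha\}$ is $\#\A_k\cdot 4\psi + O(2^{k(1-\varepsilon)})$. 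The error term is negligible provided $\psi(2^k)\#\A_k \ge 2^{k(1-\varepsilon/2)}$, which the regularization ensures via (II) (note that $\#\A_k \ge 2^k (\log 2^k)^{-C}$ because $\prod(1-1/p)\gg 1/\log x$).
\item[(b)] When $\psi$ is very small, we cannot use (III) with a short window. Instead we use $\alpha\in BAD$ directly: $\lVert h\alpha\rVert\le\eta$ forces $h\gg 1/\eta$, and the $h$ satisfying this are well spaced by the three-distance structure. This bounds the number of such $h\le 2^{k+1}$ by $O(2^k\eta)$, uniformly in bounded ratios. For each such $h$ we then count $m$ with $m,m+h\in\A$ using the upper-bound sieve from (I): both $m$ and $m+h$ avoid primes of $\p$ up to $2^{\rho k}$. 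The Selberg (or Brun) upper sieve gives
\[\ll 2^k\prod_{p\in\p,\,p\le 2^{\rho k}}\bigl(1-\tfrac{\omega_h(p)}{p}\bigr)\ll 2^k\Bigl(\prod_{p\in\p,\,p\le 2^k}(1-1/p)\Bigr)^2\prod_{p\mid h,\,p\in\p}(1+1/p),\]
where we use (II) and Mertens-type comparability of the products up to $2^{\rho k}$ versus $2^k$, with a constant depending on $\rho$.
\end{itemize}
The singular-series factor $\prod_{p\mid h}(1+1/p)$ must be averaged over the well-spaced $h$ with small $\lVert h\alpha\rVert$. For $\alpha\in BAD$ these $h$ are (up to bounded multiplicity) combinations of consecutive convergent denominators, and the average of $\sum_{d\mid h} \mu^2(d)/d$ over such a set is $O(1)$ via counting $h\equiv0\bmod d$ with $\lVert h\alpha\rVert\le\eta$, which is $\ll 2^k\eta/d+1$ again by badness. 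Together these give
\[\sum_{m\ne n}\lambda(E_m\cap E_n)\ll\Bigl(\sum\lambda(E_n)\Bigr)^2 + \sum\lambda(E_n).\]
Kochen--Stone then yields positive measure of $\limsup E_n$, and a zero--one law upgrades this to full measure. For the zero--one law we use Cassels' lemma for limsup sets of intervals, or invariance of $T_{\A}(\psi,\alpha)$ under $\gamma\mapsto\gamma+\alpha$ together with ergodicity of the rotation after a harmless fattening of $\psi$.

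\textbf{Second inclusion.} Suppose $\alpha\notin BAD_{\p}$. Then there are convergent denominators $q_j$ with $q_j\lVert q_j\alpha\rVert f_{\p}(q_j)\to0$ rapidly (after passing to a subsequence). We build $\psi\in\mathcal{M}_{\A}$ that is supported, in the monotone sense, on ranges $[q_j, N_j]$, with $\psi$ constant $\approx c_j/q_j$ there. On such a range, $n\alpha \bmod 1$ lies within $N_j\lVert q_j\alpha\rVert$ of the $q_j$ points $\{r\alpha\}$. The key point is that, by (I), elements of $\A$ avoid residues $r\bmod q_j$ sharing a factor $p\in\p$ with $q_j$. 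Hence only $\approx q_j\prod_{p\mid q_j,\,p\in\p}(1-1/p)$ residue classes occur, and the union of targets has measure $\ll \bigl(\psi+N_j\lVert q_j\alpha\rVert\bigr)\cdot q_j\prod_{p\mid q_j}(1-1/p)$. Meanwhile the contribution to $\sum_{n\in\A}\psi(n)$ on the range is $\asymp \psi N_j\prod_{p\le N_j}(1-1/p)$. Choosing $N_j$ so that the latter is $\asymp1$ and comparing the two products via $f_{\p}$ and $\prod_{p\mid q}(1-1/p)\ge\prod_{p\le q}(1-1/p)$ (these must be matched carefully with $f_{\p}(q_j)$), the target measures become summable. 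Convergence Borel--Cantelli then gives $\lambda(T_{\A}(\psi,\alpha))=0$, and we fill $\psi$ in monotonically between ranges at negligible cost.

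\textbf{Measure and dimension.} Finally, $BAD$ has full Hausdorff dimension, and $BAD_{\p}$ is null by Khintchine's theorem because $\sum 1/(n f_{\p}(n))=\infty$: by (II), $f_{\p}(n)\ll\log n$ up to constants, indeed $(\log n)^{1-\delta}$-type.

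\textbf{Main obstacle.} The main obstacle is the overlap estimate in regime (b), namely controlling the singular series over the special set of $h$ with small $\lVert h\alpha\rVert$. This is where $\alpha\in BAD$ is genuinely needed, and it is also the step that must mesh with the lower bound from (II).
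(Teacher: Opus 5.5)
Your overall architecture (an upper-bound sieve, Bohr-set averaging of the singular series and (III) for the second moment; Kurzweil's construction; Khintchine for the null set) is largely the paper's. However, the variance estimate has a genuine gap for pairs at different scales, and the passage to full measure does not work.

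\textbf{Pairs at different scales.} Your regime (a) needs $\psi(2^k)\#\mathcal{A}_k\ge 2^{k(1-\varepsilon/2)}$, i.e.\ windows of width $\gtrsim 2^{-k\varepsilon/2}$. No regularization provides this: for $\mathcal{A}=\mathbb{P}$ and $\psi(n)=1/n$ one has $\psi(2^k)\#\mathcal{A}_k\asymp 1/k$. In the relevant range $\mu_k\psi_k\lesssim 1$, a window at scale $2^k$ has width $\lesssim d_k2^{-k}$, far below the discrepancy at that scale, so (III) is useless within one scale.

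Your sieve in (b) sieves $m$ over an interval of length $2^k$. Hence for $m\approx 2^k$ and $n=m+h\approx 2^{\ell}$ it only gives $2^kd_k^{-2}$ instead of $2^k(d_kd_\ell)^{-1}$. The resulting bound $\sum_{k\le\ell}\mu_k\psi_k\mu_\ell\psi_\ell\, d_\ell/d_k$ is unbounded over Chung--Erd\H{o}s blocks. For example, for the primes with $\mu_k\psi_k\asymp 1/(k\log k)$, a block with $\sum\mu_k\psi_k\in[1,2]$ runs from $k=X$ to $k\ge X^{e}$, and this sum is $\gg X^{e-1}(\log X)^{-2}$.

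The missing idea is to split by the ratio of scales. For $m\in D_k$, $n\in D_\ell$ with $\ell\ge 3k/\varepsilon$, apply (III) at scale $2^\ell$ to the window around $m\alpha$ of the smaller index. Its width $\psi_k\ge 2^{-2k}\ge 2^{-2\varepsilon\ell/3}$ beats the relative discrepancy $\ll \ell\,2^{-\varepsilon\ell}$, precisely because of the lower-bound regularization. For $k\le\ell<3k/\varepsilon$, use the sieve, where $d_\ell/d_k\ll_\varepsilon 1$.

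\textbf{The averaging step.} This is also wrong as stated: $\#\{h\le H: d\mid h,\ \lVert h\alpha\rVert\le\eta\}\ll H\eta/d+1$ fails. Take $d=q_r\asymp M/\eta$ and $H=Mq_r$; the $M$ multiples $aq_r$ with $a\le M$ lie in the set, whereas $H\eta/d+1=M\eta+1$. The rank-$2$ progression structure gives $\ll Z^2/d+Z$ with $Z=\sqrt{H\eta}$. So you must truncate $d$ (e.g.\ via smooth parts), and this gives $O(Z^2)$ only when $Z\ge(\log H)^{\xi}$. For smaller Bohr sets the paper falls back on $h/\varphi(h)\ll\log\log h$ and absorbs the loss via $d_k\gg k^{\delta}$. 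That is the left-hand inequality of (II), which your argument never uses.

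\textbf{From positive to full measure.} $T_{\mathcal{A}}(\psi,\alpha)$ is not invariant under $\gamma\mapsto\gamma+\alpha$: the shift replaces $\mathcal{A}$ by $\mathcal{A}+1$ (for the primes, essentially the even numbers). Nor is there a soft zero--one law: for $\mathcal{A}=\{n:\{n\alpha\}\in[0,1/2]\}$ the limsup set has measure $1/2$, so any such law must use equidistribution essentially. Cassels' lemma yields a zero--one law only together with an ergodic transformation preserving the limsup set, which is unavailable here.

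What works is a localized version of your second-moment argument. Equidistribution of $(n\alpha)_{n\in D_k}$, from (II) and (III), gives $\sum_n\lambda(E_n\cap I)\gg\lambda(I)\sum_n\lambda(E_n)$ on all late blocks, for every fixed interval $I$. Chung--Erd\H{o}s then yields $\lambda(\limsup E_n\cap I)\gg\lambda(I)^2$ uniformly in $I$. The Beresnevich--Dickinson--Velani density lemma upgrades this to full measure (Lemmas~\ref{ramirez_lem} and~\ref{bdv_applied}).

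\textbf{Second inclusion.} This is Kurzweil's construction as in the paper, but one computation needs correcting. For $n=r+tq_j\le N_j$ the spread is $\lVert tq_j\alpha\rVert\le (N_j/q_j)\lVert q_j\alpha\rVert$, not $N_j\lVert q_j\alpha\rVert$; with your bound the union of targets loses a factor $q_j$ and is not summable. The residue-class restriction from (I) is unnecessary. With the correct spread the union has measure $\ll c_j+N_j\lVert q_j\alpha\rVert+q_j\lVert q_j\alpha\rVert$. This is summable for $c_j=2^{-j}$, $N_j\asymp 2^jq_jf_{\mathcal{P}}(q_j)$ and $q_j\lVert q_j\alpha\rVert f_{\mathcal{P}}(q_j)\le 4^{-j}$, using only the density in (II).
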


While the above statement is quite general, we illustrate its application to certain sets $\A$ of number-theoretic interest below. Indeed, for $\p = \mathbb{P}$, we obtain the following by proving (I),(II), and (III) for the respective setups (see Section \ref{subsec_prime_s2} for a detailed proof of Corollaries \ref{prime_cor} - \ref{intersec_cor}):

\begin{cor}[Prime case]\label{prime_cor}
    Let $\alpha$ be badly approximable. Then for any monotonically decreasing $\psi: \mathbb{N} \to [0,\infty)$, we have 
    \[\lambda(\{\gamma \in [0,1): \lVert p\alpha + \gamma \rVert \leq \psi(p) \text{ for infinitely many } p \in \mathbb{P}\}) = 
 \begin{cases}
0 &\text { if }  \sum\limits_{p \in \mathbb{P}} \psi(p) < \infty,\\
    1 &\text{ if } \sum\limits_{p \in \mathbb{P}}\psi(p) = \infty.
\end{cases}
    \]
    However, for $\alpha$ satisfying
    \[\liminf_{n \to \infty} n \lVert n \alpha\rVert \log n = 0,\]
    there exists a monotonically decreasing $\psi = \psi_{\alpha}$ with $\sum\limits_{p \in \mathbb{P}}\psi(p) = \infty$, but 
    \[\lambda(\{\gamma \in [0,1): \lVert p\alpha + \gamma \rVert \leq \psi(p) \text{ for infinitely many } p \in \mathbb{P}\}) = 0.\]
\end{cor}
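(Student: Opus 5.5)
Corollary \ref{prime_cor} will follow from Theorem \ref{thm_prime_like} applied with $\A = \mathbb{P}$ and $\p = \mathbb{P}$, so the plan is to verify (I), (II), (III) and then identify $BAD_{\p}$.

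Condition (I) holds for any $\rho \in (0,1)$. If $n \in \mathbb{P}$ with $n \geq 2^k$, then $n$ has no prime divisor $p \leq 2^{\rho k} < n$ once $k$ is large.

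Condition (II) comes from the prime number theorem together with Mertens' theorem:
\[\#\{x \leq p < 2x\} \asymp \frac{x}{\log x}, \qquad \prod_{p \leq x}\left(1 - \frac1p\right) \asymp \frac{1}{\log x}.\]
This also gives the upper bound $\ll x(\log x)^{-\delta}$ with $\delta = 1$.

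Condition (III) is the step I expect to be the main obstacle. For $\alpha \in BAD$ I would use Vinogradov--Vaughan exponential sum bounds. Vaughan's identity gives, for $|\alpha - a/q| \leq q^{-2}$,
\[\sum_{n \leq x}\Lambda(n)e(hn\alpha) \ll (\log x)^4\left(xq^{-1/2} + x^{4/5} + x^{1/2}q^{1/2}\right).\]
For $h\alpha$ with $\alpha$ badly approximable and $1 \leq h \leq H = x^{\eta}$, I can choose a convergent denominator $q$ of $h\alpha$ with $q \asymp x^{1/2}$ up to powers of $h$. This uses that $\lVert m h\alpha\rVert \gg_\alpha (mh)^{-1}$, so the continued fraction expansion of $h\alpha$ has partial quotients polynomially bounded in $h$. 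The exponential sum is then $\ll x^{1-\varepsilon'}$ uniformly in $h \leq H$. Partial summation removes $\Lambda$ and the prime powers at a cost of $O(x^{1/2})$, and Erdős--Turán converts the exponential sum bound into a discrepancy bound $\ll x/H + \sum_{h \leq H} h^{-1}x^{1-\varepsilon'} \ll x^{1-\varepsilon}$. The delicate point is the uniformity in $h$ of the Diophantine information about $h\alpha$. If that becomes awkward, I can instead just take $q$ to be a convergent denominator of $\alpha$ and argue $\lVert h\alpha - ha/q\rVert \leq h/q^2$ directly with a rational approximation of $h\alpha$ with denominator $q/\gcd(h,q)$.

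Once (I)--(III) hold, Theorem \ref{thm_prime_like} gives $BAD \subseteq \bigcap_{\psi}\mathcal{K}_{\mathbb{P}}(\psi)$. This is the first assertion in the divergence case; the convergence case is the first Borel--Cantelli lemma, since each target set has measure $2\psi(p)$.

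For the second assertion, Mertens gives $f_{\mathbb{P}}(n) = \prod_{p \leq n}(1 + 1/p) \asymp \log n$. Hence $BAD_{\mathbb{P}} = \{\alpha : \liminf_n n\lVert n\alpha\rVert\log n > 0\}$. If $\alpha$ satisfies $\liminf_n n\lVert n\alpha\rVert \log n = 0$, then $\alpha \notin BAD_{\mathbb{P}}$. The upper inclusion in \eqref{kw_ii_sieve} then gives some $\psi \in \mathcal{M}_{\mathbb{P}}$ with $\alpha \notin \mathcal{K}_{\mathbb{P}}(\psi)$, i.e. $\lambda(T_{\mathbb{P}}(\psi,\alpha)) < 1$. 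To upgrade this to measure $0$, I would use a zero-one law. The set $T_{\mathbb{P}}(\psi,\alpha)$ is a limsup set invariant under $\gamma \mapsto \gamma + \alpha$ up to the shift $\psi(p) \to \psi(p+1)$. Alternatively, I would inspect the construction from the proof of the theorem, which presumably already produces measure $0$; failing that, I would apply Cassels-type zero-one arguments for the rotation, which is ergodic since $\alpha$ is irrational.
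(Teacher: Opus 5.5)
The reduction is the paper's: apply Theorem \ref{thm_prime_like} with $\mathcal{A}=\mathcal{P}=\mathbb{P}$, check (I)--(III), and use $f_{\mathbb{P}}(n)\asymp\log n$. Your route to (III) differs only in detail. You apply Vaughan's bound frequency by frequency, using a convergent denominator of $h\alpha$, whose partial quotients are $\ll_\alpha h$. The paper instead uses Lemma \ref{sum_expsum} and Corollary \ref{discr}, which bound $\sum_{r\le R}\lvert\sum e(rn\alpha)\rvert$ with a single rational approximation of $\alpha$. Your version also yields a power saving, and the paper itself notes that Vaughan's estimates already suffice for primes, so that part is fine.

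The gap is in upgrading $\lambda(T_{\mathbb{P}}(\psi,\alpha))<1$ to $\lambda(T_{\mathbb{P}}(\psi,\alpha))=0$ in the second assertion. The inclusion $\bigcap_{\psi}\mathcal{K}_{\mathbb{P}}(\psi)\subseteq BAD_{\mathbb{P}}$ by itself only gives measure $<1$.

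\begin{itemize}
\item Your primary argument fails. $T_{\mathbb{P}}(\psi,\alpha)$ is not invariant, even approximately, under $\gamma\mapsto\gamma+\alpha$: the condition at $p$ becomes $\lVert (p+1)\alpha+\gamma\rVert\le\psi(p)$, and $p+1\notin\mathbb{P}$. So ergodicity of the rotation gives nothing.
\item No general zero-one law holds for restricted sets. As the paper notes, $\mathcal{A}=\{n:\{n\alpha\}\in[0,1/2]\}$ gives measure $1/2$.
\end{itemize}

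The correct argument is your fallback, and it should be stated as the proof rather than as a ``presumably''. Apply Proposition \ref{counterex_non_bad} with $f(x)=\log x$ (equivalently $f_{\mathbb{P}}$). This $f$ satisfies \eqref{pseudo_log}, it satisfies \eqref{dens_f} by the prime number theorem, and the hypothesis $\liminf_q qf(q)\lVert q\alpha\rVert=0$ is exactly the corollary's assumption. The proof of that proposition builds $\psi$ constant on finite blocks $S_k$ that partition $\mathbb{N}$, and establishes \eqref{conv_BC}, namely $\lambda\bigl(\bigcup_{n\in S_k\cap\mathbb{P}}A_n\bigr)\ll 2^{-k}$. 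Since $T_{\mathbb{P}}(\psi,\alpha)\subseteq\limsup_k\bigcup_{n\in S_k\cap\mathbb{P}}A_n$, the convergence Borel--Cantelli lemma gives $\lambda(T_{\mathbb{P}}(\psi,\alpha))=0$ directly.
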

Similarly, with $\p = \{p \in \mathbb{P}: p \equiv 3 \pmod 4\}$, we obtain\footnote{Strictly speaking, $\mathbb{S}_2$ itself does not satisfy (I). We will use a subset of $\mathbb{S}_2$ with positive relative density that satisfies (I), which in turn allows us to conclude the statement for $\mathbb{S}_2$. See Section \ref{subsec_prime_s2} for details.} the following for the sum of two squares:

\begin{cor}[Case of sum of two squares]\label{s2_cor}
    Let $\alpha$ be badly approximable. Then for any  monotonically decreasing $\psi: \mathbb{N} \to [0,\infty)$, we have 
    \[\lambda(\{\gamma \in [0,1): \lVert n\alpha + \gamma \rVert \leq \psi(n) \text{ for infinitely many } n \in \bS_2\}) = 
 \begin{cases}
0 &\text { if }  \sum\limits_{n \in \bS_2} \psi(n) < \infty,\\
    1 &\text{ if } \sum\limits_{n \in \bS_2}\psi(n) = \infty.
\end{cases}
    \]
    However, for $\alpha$ satisfying
    \[\liminf_{n \to \infty} n \lVert n \alpha\rVert \sqrt{\log n} = 0,\]
    there exists a monotonically decreasing $\psi = \psi_{\alpha}$ with $\sum\limits_{n \in \bS_2}\psi(n) = \infty$, but 
    \[\lambda(\{\gamma \in [0,1): \lVert n\alpha + \gamma \rVert \leq \psi(n) \text{ for infinitely many } n \in \bS_2\}) = 0.\]
\end{cor}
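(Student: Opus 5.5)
The plan is to deduce Corollary~\ref{s2_cor} from Theorem~\ref{thm_prime_like} with $\p := \{p \in \pr : p \equiv 3 \pmod 4\}$. As the footnote indicates, the theorem is applied not to $\bS_2$ itself but to the subset
\[\A' := \{n \in \N : p \nmid n \text{ for all } p \in \p\} \subseteq \bS_2 .\]
By \eqref{s2_char}, $\A'$ is indeed contained in $\bS_2$. It satisfies (I) trivially, for every $\rho$.

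For (II), I would use the half-dimensional sieve, or equivalently the Selberg--Delange method applied to $\sum_n \mathds{1}_{\A'}(n) n^{-s} \approx (\zeta(s) L(s,\chi_4))^{1/2}\cdot(\text{holomorphic})$. This gives $\#\{x \le n < 2x : n \in \A'\} \asymp x/\sqrt{\log x}$. Mertens' theorem in arithmetic progressions gives $\prod_{p \in \p, p \le x}(1 - 1/p) \asymp (\log x)^{-1/2}$, so (II) holds with $\delta = 1/2$. The same asymptotics show that $\A'$ has positive relative density in $\bS_2$ on every dyadic block $[x,2x)$, by Landau's theorem. Finally $f_\p(n) \asymp \sqrt{\log n}$, so $\alpha \notin BAD_\p$ is exactly the hypothesis $\liminf n\lVert n\alpha\rVert\sqrt{\log n} = 0$.

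The main obstacle is (III), a power-saving discrepancy bound for $(n\alpha)_{n\in\A'}$ when $\alpha \in BAD$. By the Erd\H{o}s--Tur\'an inequality it suffices to show
\[\sum_{n \le x,\, n \in \A'} e(hn\alpha) \ll x^{1-\varepsilon'} \quad \text{for all } 1 \le h \le x^{\varepsilon''}.\]
I would decompose $\mathds{1}_{\A'}$ by a Buchstab/Harman-type sieve decomposition into three parts:
\begin{itemize}
\item \emph{Type I sums} $\sum_{d \le D} a_d \sum_{m \le x/d} e(hdm\alpha)$. These are bounded by $\ll (x/q + q + D)x^{o(1)}$, taking a convergent denominator $q \approx x^{1/2}$ of $\alpha$; such $q$ are available with controlled gaps because $\alpha \in BAD$.
\item \emph{Type II bilinear sums} over $n = mk$ with $m \in [x^{\eta}, x^{1/2}]$. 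These are handled by Cauchy--Schwarz together with the same convergent-denominator bound.
\item \emph{A remaining part} consisting of $n$ all of whose prime factors are $< x^{\eta}$, or $n$ having a single large prime factor. The large-prime piece reduces to exponential sums over primes $\equiv 1 \pmod 4$ in short ranges, where Vaughan's identity with $\alpha \in BAD$ gives a power saving. The smooth piece is again written as bilinear, since a smooth $n$ has a divisor in any interval $[x^{1/3}, x^{1/3+\eta}]$.
\end{itemize}
The delicate point is making the bookkeeping give a genuine power saving $x^{-\varepsilon}$ rather than a $\log$-saving.

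With (I)--(III) in hand, Theorem~\ref{thm_prime_like} gives $BAD \subseteq \mathcal{K}_{\A'}(\psi)$ for all $\psi \in \cM_{\A'}$. For the divergence part of Corollary~\ref{s2_cor}, take monotone $\psi$ with $\sum_{n\in\bS_2}\psi(n) = \infty$. Monotonicity and the comparable dyadic counts give
\[\sum_{n \in \A'} \psi(n) \gg \sum_k \psi(2^{k+1}) \#(\bS_2 \cap [2^k,2^{k+1})) \gg \sum_{n \in \bS_2} \psi(n) - O(1) = \infty,\]
where the middle step uses one dyadic shift. Hence $\psi \in \cM_{\A'}$, and $T_{\A'}(\psi,\alpha) \subseteq T_{\bS_2}(\psi,\alpha)$ has full measure. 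The convergence case follows from the convergence Borel--Cantelli lemma.

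For the negative part, I cannot just quote \eqref{kw_ii_sieve} for $\A'$. That would give $\psi \in \cM_{\A'} \subseteq \cM_{\bS_2}$ with $\lambda(T_{\A'}) < 1$, but the claim concerns $T_{\bS_2}$. Instead I would rerun the construction from the proof of the upper inclusion in Theorem~\ref{thm_prime_like}. There, $\psi$ is built on blocks around convergent denominators $q$ with $q\lVert q\alpha\rVert \sqrt{\log q}$ small, and the union of target intervals is shown to be small. As far as I expect, that construction uses only an upper-bound count of $\A$ in residue classes modulo $q$ at scale $q$. For $\bS_2$ such a bound, namely $\ll x/(\varphi\text{-type factor}\cdot\sqrt{\log x})$, follows from the Selberg upper-bound sieve applied to the condition \eqref{s2_char}. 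I expect the construction then transfers to $\bS_2$ and gives $\lambda(T_{\bS_2}(\psi,\alpha)) = 0$; a zero--one law upgrades "$<1$" to "$=0$" if needed.
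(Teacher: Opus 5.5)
Your positive half follows the paper's route. You pass to a subset of $\bS_2$ of positive relative density with a clean sieve structure, verify (I)--(III), apply Theorem~\ref{thm_prime_like}, and transfer back by comparing dyadic counts; this transfer is Proposition~\ref{pos_rel_dens_suff}. The convergence case is Borel--Cantelli. The differences are minor. The paper takes $N_{1,4}$, for which the parity of the number of prime factors $\equiv 3 \pmod 4$ lets one sift only up to $\sqrt{N}$ and quote the Ramar\'e--Viswanadham-type estimate of Lemma~\ref{sum_expsum}. Your fully sifted set $\A'=\{2^j m: m\in N_{1,4}\}$ instead forces you to treat the single-large-prime piece separately, via Vaughan's identity for primes in progressions. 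At the level of a sketch, both work.

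The gap is in the negative half. You are right that quoting \eqref{kw_ii_sieve} for $\A'$ does not suffice. However, you have misidentified what the Kurzweil construction needs, and as written you do not prove the claim for $\bS_2$.

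The construction in Proposition~\ref{counterex_non_bad} uses no upper bound for $\A$ in residue classes. For every integer $n \le 2^k q f(q)$, the point $n\alpha$ lies within $O(2^{-k}/q)$ of a point $a/q$ depending only on $n \bmod q$. So the union of the target intervals over the entire block lies in $q$ intervals of length $O(2^{-k}/q)$, of total measure $O(2^{-k})$, whatever $\A$ is.

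The only arithmetic input is a lower bound $\#(\A\cap[1,N]) \gg N/f(N)$ with $f(x^n)\ll_n f(x)$. This is used solely to make $\sum_{n\in\A}\psi(n)$ diverge. For $\bS_2$ it is Landau's theorem with $f(x)=\sqrt{\log x}$. Hence Proposition~\ref{counterex_non_bad} applies to $\A=\bS_2$ directly, with no detour through $\A'$ and no sieve. The convergence Borel--Cantelli lemma then gives measure $0$ outright.

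Your fallback, a zero--one law upgrading ``$<1$'' to ``$=0$'', is not available. Such laws fail for general restricted sets: the paper's example $\A=\{n:\{n\alpha\}\in[0,1/2]\}$ gives measure $1/2$. No zero--one law is known for $\bS_2$ either.
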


Theorem \ref{thm_prime_like} can be applied to further sequences of number-theoretic importance. Recall the \textit{Löschian integers}
\[\mathbb{L} := \{n \in \mathbb{N}: \exists k,\ell \in \mathbb{Z}: n = k^2 + k\ell + \ell^2\},\] which equals the set of all possible norms of Eisenstein integers.
With $\p = \{p \in \mathbb{P}: p \equiv 2 \pmod 3\}$, we obtain the following for $\mathbb{L}$:

\begin{cor}[Case of Löschian integers]\label{loesch_cor}
Let $\alpha$ be badly approximable. Then for any monotonically decreasing $\psi: \mathbb{N} \to [0,\infty)$, we have 
    \[\lambda(\{\gamma \in [0,1): \lVert n\alpha + \gamma \rVert \leq \psi(n) \text{ for infinitely many } n \in \bl\}) = 
 \begin{cases}
0 &\text { if }  \sum\limits_{n \in \bl} \psi(n) < \infty,\\
    1 &\text{ if } \sum\limits_{n \in \bl}\psi(n) = \infty.
\end{cases}
    \]
    However, for $\alpha$ satisfying
    \[\liminf_{n \to \infty} n \lVert n \alpha\rVert \sqrt{\log n} = 0,\]
    there exists a monotonically decreasing $\psi = \psi_{\alpha}$ with $\sum\limits_{n \in \bl}\psi(n) = \infty$, but 
    \[\lambda(\{\gamma \in [0,1): \lVert n\alpha + \gamma \rVert \leq \psi(n) \text{ for infinitely many } n \in \bl\}) = 0.\]
\end{cor}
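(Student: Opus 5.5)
We deduce the corollary from Theorem \ref{thm_prime_like} with $\p := \{p \in \pr : p \equiv 2 \pmod 3\}$, mirroring the treatment of $\bS_2$. The convergence half is the convergence Borel--Cantelli lemma, since $\lambda(\{\gamma: \lVert n\alpha+\gamma\rVert \le \psi(n)\}) \le 2\psi(n)$. For the divergence half we use the classical characterization that $n \in \bl$ if and only if every prime $p \equiv 2 \pmod 3$ divides $n$ to an even power. As with $\bS_2$, $\bl$ itself violates (I), so we replace it by a subset $\A \subseteq \bl$ of positive relative density. Take $\A$ to be the set of $n$ all of whose prime factors are $3$ or $\equiv 1 \pmod 3$. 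Then $\A\subseteq \bl$, and $\A$ satisfies (I) trivially for every $\rho$.

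By the Landau--Wirsing mean value theorem for multiplicative functions (a half-dimensional sieve), both $\#(\A\cap[x,2x))$ and $\#(\bl\cap[x,2x))$ are $\asymp x/\sqrt{\log x}$. By Mertens' theorem in progressions this is $\asymp x\prod_{p\in\p,\,p\le x}(1-1/p)$, so (II) holds with any $\delta<1/2$.

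To pass from $\A$ back to $\bl$, let $\psi\in\cM_{\bl}$. Since $\psi$ is decreasing and $\A$ has positive relative density in $\bl$ on every dyadic block $[2^k,2^{k+1})$, dyadic summation gives $\sum_{n\in\A}\psi(n)\gg \sum_{n\in\bl}\psi(n)=\infty$, so $\psi\in\cM_\A$. Also $T_\A(\psi,\alpha)\subseteq T_{\bl}(\psi,\alpha)$. Hence $BAD\subseteq\mathcal{K}_\A(\psi)\subseteq\mathcal{K}_{\bl}(\psi)$, which gives the positive statement for $\bl$.

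For the negative statement, Mertens in progressions gives $f_\p(n)\asymp\sqrt{\log n}$. Thus $\liminf n\lVert n\alpha\rVert\sqrt{\log n}=0$ means $\alpha\notin BAD_\p$. The right-hand inclusion of \eqref{kw_ii_sieve} then produces a $\psi$ that diverges along $\A$ (hence along $\bl$) with $\lambda(T_\A(\psi,\alpha))=0$. This is not yet enough, because we need the failure along $\bl$, not only along $\A$. So I would revisit the construction behind that inclusion. My expectation is that it only uses the upper bound in (II) and the sieve structure of the complement, both of which hold for $\bl$ as well. If it is not purely formal, I would rerun the construction directly for $\bl$, noting that $\bl\setminus\A$ consists of numbers divisible by $p^2$ for some $p\in\p$, whose contribution is controlled by the sieve bound.

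The main obstacle is (III): a power-saving discrepancy for $(n\alpha)_{n\in\A}$ when $\alpha\in BAD$. By Erd\H{o}s--Tur\'an it suffices to show $\sum_{n\in\A,\,n\le x}e(hn\alpha)\ll x^{1-\varepsilon'}$ uniformly for $1\le h\le x^{\varepsilon''}$. Since $\A$ has a multiplicative structure, I would first try a Vaughan/Heath-Brown style combinatorial decomposition of $\mathds{1}_\A$, using a Harman-type sieve identity adapted to the half-dimensional setting, into Type I and Type II sums.

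The Type I sums $\sum_{d\le D} a_d\sum_{m\le x/d}e(hdm\alpha)$ are geometric series. They are bounded by $\sum_d\min(x/d,\lVert hd\alpha\rVert^{-1})$, which for $\alpha\in BAD$ is $\ll x^{1-\varepsilon}$ when $D\le x^{1-\eta}$. The Type II sums are handled by Cauchy--Schwarz and the same bound, exactly as in Vaughan's proof for primes; the badly approximable property makes all losses powers of $x$ rather than merely logarithmic.

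If the decomposition for $\A$ proves awkward, I would instead allow $\A$ to be sieved only by primes $p\in\p$ with $p\le x^\rho$, which still satisfies (I). Its indicator is then handled by the fundamental lemma of sieve theory with level $x^{\rho'}$: the main terms are Type I sums over $d\mid n$, and the remainder is power-small.
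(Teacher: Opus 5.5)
You follow the paper's reduction: Theorem~\ref{thm_prime_like} with $\p=\{p\equiv 2 \pmod 3\}$ applied to a subset of $\bl$ built from primes $\equiv 1\pmod 3$ (the paper uses $\bl'=N_{1,3}$, without the powers of $3$), (I) and (II) from mean-value estimates, and Proposition~\ref{pos_rel_dens_suff} to return to $\bl$. The gap is (III). You rightly call it the crux, but you do not establish it.

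Your $\A$ is sifted by \emph{all} primes of $\p$ up to $x$, and for such a set a Vaughan/Heath-Brown identity does not transfer directly. The Dirichlet series of $\mathds{1}_{\A}$ is $\zeta(s)\prod_{p\in\p}(1-p^{-s})$. The analogue of Heath-Brown's identity would have to invert the product, whose inverse has as coefficients the indicator of integers composed only of primes in $\p$. That is not a smooth weight, so long variables of that type lead back to an exponential sum over a half-dimensional set. A Buchstab/Harman-type decomposition instead leaves the terms $n=pm$ with $p\in\p$, $p>\sqrt x$: exponential sums over primes in a residue class, twisted by $m$. Your sketch never treats these.

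The missing idea is the parity observation of Lemma~\ref{lem_densities}(i). For $n\le N$,
\[ n\in N_{1,3}\iff n\equiv 1 \pmod 3 \text{ and } p\nmid n \text{ for all primes } p\equiv 2 \pmod 3 \text{ with } p\le \sqrt N, \]
since a single prime factor $p\equiv 2\pmod 3$ with $p>\sqrt N$ would force $n\equiv 2\pmod 3$. So the large-prime terms vanish identically, and the congruence is removed with additive characters mod $3$. Only the set $\cS_N$ sifted up to $\sqrt N$ remains. For it, the Ramar\'e--Viswanadham identity produces Type~I sums with $d\le M$, bilinear sums with $z\le p\le \sqrt N$ or $M\le \ell\le Mz$, and an error $O(N/z)$; this is Lemma~\ref{sum_expsum} and Corollary~\ref{discr}. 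With your powers of $3$ included, the condition becomes $n3^{-v_3(n)}\equiv 1\pmod 3$, which is why the paper drops them.

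Your fallback does not close the gap, for two reasons.
\begin{itemize}
\item A set sifted only by $p\in\p$ with $p\le x^\rho$ is not contained in $\bl$. A positive proportion of its elements have a prime factor $q\equiv 2\pmod 3$, $q>x^\rho$, to an odd power, so Proposition~\ref{pos_rel_dens_suff} transfers nothing back to $\bl$. The only such sifted set lying inside $\bl$ is the one sifted up to $\sqrt x$ together with the congruence $n\equiv 1\pmod 3$, which is exactly the parity trick above.
\item The fundamental lemma with $z=x^\rho$ and level $x^{\rho'}$ only traps $\mathds{1}_{\A}$ between $\lambda^-*1$ and $\lambda^+*1$, whose totals differ by $O(e^{-s})$ times the main term, with $s=\rho'/\rho$ fixed. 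The Type~I remainders are power-small, but this intrinsic sieve error is a fixed fraction of $\mu_\ell$. So no power-saving discrepancy results.
\end{itemize}

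A minor point on the negative direction. Proposition~\ref{counterex_non_bad} uses only the lower bound $\#(\bl\cap[1,N])\gg N/\sqrt{\log N}$, to make $\psi$ diverge. The argument behind \eqref{conv_BC} bounds the union of $A_n$ over \emph{all} integers $n\in S_k$. Hence it applies verbatim to $\bl$ with $f(x)=\sqrt{\log x}$ and gives measure $0$. No control of $\bl\setminus\A$ is needed, contrary to your expectation, and this direct application to $\bl$ is what the paper does.
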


We can intersect the sets $\bS_2$ and $\mathbb{L}$, which gives rise to a sieve of dimension $3/4$, with yet another application as follows:

\begin{cor}[Case of $\bS_2 \cap \bl$]\label{intersec_cor}
Let $\alpha$ be badly approximable. Then for any monotonically decreasing $\psi: \mathbb{N} \to [0,\infty)$, we have 
    \[\lambda(\{\gamma \in [0,1): \lVert n\alpha + \gamma \rVert \leq \psi(n) \text{ for infinitely many } n \in \bS_2 \cap \bl\}) = 
 \begin{cases}
0 &\text { if }  \sum\limits_{n \in \bS_2 \cap \bl} \psi(n) < \infty,\\
    1 &\text{ if } \sum\limits_{n \in \bS_2 \cap \bl}\psi(n) = \infty.
\end{cases}
    \]
    However, for $\alpha$ satisfying
    \[\liminf_{n \to \infty} n \lVert n \alpha\rVert (\log n)^{3/4} = 0,\]
    there exists a monotonically decreasing $\psi = \psi_{\alpha}$ with $\sum\limits_{n \in \mathbb{S}_2 \cap \bl}\psi(n) = \infty$, but 
    \[\lambda(\{\gamma \in [0,1): \lVert n\alpha + \gamma \rVert \leq \psi(n) \text{ for infinitely many } n \in \bS_2 \cap \bl\}) = 0.\]
\end{cor}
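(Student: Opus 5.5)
We apply Theorem~\ref{thm_prime_like} with
\[\p := \{p \in \pr: p \equiv 3 \pmod 4\} \cup \{p \in \pr : p \equiv 2 \pmod 3\},\]
not to $\bS_2 \cap \bl$ itself but to a subset of positive relative density, as in the footnote to Corollary~\ref{s2_cor}.

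\textbf{Choice of $\p$ and the subset.} By \eqref{s2_char} and its Eisenstein analogue, $n \in \bS_2 \cap \bl$ if and only if every prime of $\p$ divides $n$ to an even power. Modulo $12$, $\p$ contains (apart from $2,3$) the classes $5,7,11$, i.e. three of the four reduced classes. Mertens' theorem in progressions therefore gives
\[\prod_{p\in\p,\,p\le x}\Bigl(1-\frac1p\Bigr)\asymp(\log x)^{-3/4},\qquad f_\p(n)\asymp(\log n)^{3/4}.\]
Consequently $BAD_\p$ is exactly the set of $\alpha$ with $\liminf_{n} n\lVert n\alpha\rVert(\log n)^{3/4}>0$, which is the dichotomy in the statement. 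Let
\[\A' := \{n : p\nmid n \text{ for all } p\in\p\},\]
the integers all of whose prime factors are $\equiv 1 \pmod{12}$. Then $\A' \subseteq \bS_2\cap\bl$, and (I) holds trivially for any $\rho$. By the Selberg--Delange method, both $\#(\A'\cap[x,2x))$ and $\#(\bS_2\cap\bl\cap[x,2x))$ are $\asymp x(\log x)^{-3/4}$. This gives (II) with $\delta = 1/2$, say, and shows that $\A'$ has positive relative density in $\bS_2\cap\bl$ on every dyadic block.

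\textbf{Discrepancy (III).} This is the step I expect to be the main obstacle. By Erdős--Turán, it suffices to show that for $\alpha\in BAD$ and $1\le h\le x^{\eta}$,
\[\sum_{n\le x,\,n\in\A'} e(hn\alpha)\ll x^{1-\varepsilon'}.\]
Since $1_{\A'}$ is multiplicative and supported on primes $\equiv 1 \pmod{12}$, I would decompose it by detecting the congruence conditions with characters mod $12$. I would then use a Vaughan/Heath-Brown-type combinatorial identity to write $1_{\A'}$ as a sum of type I sums (smooth long variable) and type II bilinear sums. Type I sums are controlled by $\sum_{m\le M}\min(N,\lVert hm\alpha\rVert^{-1})$. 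Type II sums are handled by Cauchy--Schwarz followed by the same bound. For badly approximable $\alpha$, the continued-fraction denominators grow geometrically, and the standard estimates give a power saving in every range. I would follow exactly the argument used for $\bS_2$ in Section~\ref{subsec_prime_s2}, which only needs a sieve of different dimension here.

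\textbf{Transfer to $\bS_2\cap\bl$.} With (I)--(III) verified, Theorem~\ref{thm_prime_like} gives $BAD\subseteq\bigcap_{\psi}\mathcal K_{\A'}(\psi)\subseteq BAD_\p$.

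\emph{Divergence, $\alpha\in BAD$.} Let $\psi$ be monotone with $\sum_{n\in\bS_2\cap\bl}\psi(n)=\infty$. Dyadic comparison gives
\[\sum_{\substack{n\in\A'\\ n\in[2^k,2^{k+1})}}\psi(n)\ \ge\ \psi(2^{k+1})\,\#\bigl(\A'\cap[2^k,2^{k+1})\bigr)\ \gg\ \sum_{\substack{n\in \bS_2\cap\bl\\ n\in[2^{k+1},2^{k+2})}}\psi(n),\]
so $\psi\in\mathcal M_{\A'}$. Since $T_{\A'}(\psi,\alpha)\subseteq T_{\bS_2\cap\bl}(\psi,\alpha)$, the latter set has full measure.

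\emph{Convergence.} This is the convergence Borel--Cantelli lemma.

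\emph{Negative direction.} Take $\alpha$ with $\liminf n\lVert n\alpha\rVert(\log n)^{3/4}=0$. I would reuse the construction of $\psi_\alpha$ from the proof of the second inclusion in Theorem~\ref{thm_prime_like}. That construction places $\psi$ on blocks around the good denominators $q_k$, where $(n\alpha)$ for $n$ in a block concentrates on few points. Its proof should use only the upper-bound counting in (II), which holds equally for $\bS_2\cap\bl$, together with the lower bound guaranteeing $\sum\psi=\infty$. Hence it applies verbatim with $\A=\bS_2\cap\bl$. If it also uses (I) in an essential way, I would instead run it on $\bS_2\cap\bl$ directly, splitting $n$ by its $\p$-part. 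The $\p$-parts are squares, so each contributes a convergent factor.
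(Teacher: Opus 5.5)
Apart from (III), your argument is the paper's argument. You use the same subset $\A'=N_{1,12}$; your $\p$ differs from the paper's $\{p\equiv 5,7,11 \pmod{12}\}$ only by the primes $2,3$, which is harmless. You then apply Theorem~\ref{thm_prime_like}, transfer to $\bS_2\cap\bl$ by relative density as in Proposition~\ref{pos_rel_dens_suff}, and apply Proposition~\ref{counterex_non_bad} with $f(x)=(\log x)^{3/4}$ to $\bS_2\cap\bl$ itself. That construction uses only the \emph{lower} bound $\#(\bS_2\cap\bl\cap[1,N])\gg N(\log N)^{-3/4}$, together with the clustering of $n\alpha$ near $a/q_{n_k}$, which holds for any set. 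It uses neither (I) nor an upper bound, so your hedge is unnecessary.

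The gap is (III). This is the only genuinely analytic input, and your sketch does not establish it.

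\emph{Why your route fails.} Vaughan's and Heath-Brown's identities decompose $\Lambda$ and $\mu$, and they come from $\zeta'/\zeta$ and $1/\zeta$. The Dirichlet series of $1_{\A'}$ is essentially a quarter power $(\zeta L_{\chi_1}L_{\chi_2}L_{\chi_3})^{1/4}$, and there is no identity of that shape for it. The natural substitute is $1_{\A'}\log=1_{\A'}*(\Lambda 1_{\A'})$, but for a short prime variable $d$ it leaves the original sum $\sum_{m\le N/d}1_{\A'}(m)e(dm\alpha)$, with a long $\A'$-variable. General bounds for multiplicative functions (Daboussi, Montgomery--Vaughan) save only a power of $\log N$, which is useless against the main term $N(\log N)^{-3/4}$. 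Case~1 of the proof of Theorem~\ref{thm_prime_like} needs a power saving: the error there must be $\ll 2^{-2k}\mu_\ell$ with $\ell\ge 3k/\varepsilon$.

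\emph{The missing idea} is the parity reduction \eqref{sieve_i}. For $n\le N$, $n\in N_{1,12}$ if and only if $n\equiv 1\pmod{12}$ and $n$ has no prime factor $\le\sqrt N$ in the classes $5,7,11 \bmod 12$. The reason is that these classes are self-inverse mod $12$. So a bad prime $p>\sqrt N$ dividing such an $n$ forces $n/p<\sqrt N$ to satisfy $n/p\not\equiv 1\pmod{12}$, and hence to contain a small bad prime.

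This truncation of the sifting range at $\sqrt N$ is what lets Ramar\'e's sieve identity in Lemma~\ref{sum_expsum} reduce everything to three pieces: a Type~I sum with $d\le M$, bilinear sums with the prime in $[z,\sqrt N]$, and an $O(RN/z)$ error. These are bounded by Vaughan's Lemma~3 and Harman's bilinear lemma with $q\asymp\sqrt{RN}$, which is available at every scale since $\alpha\in BAD$. Erd\H{o}s--Tur\'an then gives discrepancy $N^{5/6+\xi}$.

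The congruence $n\equiv 1\pmod{12}$ is removed with additive characters, since $\alpha+j/12$ is again badly approximable. This, not a characters-plus-combinatorial-identity decomposition, is what ``following the $\bS_2$ argument'' actually requires.
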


\subsubsection*{Remarks and open questions}
\begin{itemize}
    \item 

To the best of the author's knowledge, the results mentioned in \eqref{prime_approx}, respectively \eqref{s2_approx},
do not improve when assuming $\alpha$ to have certain Diophantine properties, such as $\alpha$ being badly approximable. Although these results rely on deep number-theoretic considerations, we are very far away from the conjectured bounds of $\tau = 1$ in \eqref{prime_approx} as well as \eqref{s2_approx}. However, Corollaries \ref{prime_cor} and \ref{s2_cor} provide for every fixed badly approximable $\alpha$,

\[\lVert p\alpha + \gamma\rVert \leq \frac{1}{p \log \log p} \text{ for i.m. } p \in \mathbb{P},\]
as well as
\[\lVert n\alpha + \gamma\rVert \leq \frac{1}{n \sqrt{\log n}\log \log n} \text{ for i.m. } n \in \bS_2,\]
for almost every $\gamma$. While the method is purely metric and thus does not allow us to say anything non-trivial for a fixed $\gamma$, we believe the vast improvement in the metrical sense to be interesting on its own, expecting applications in related areas.

\item Corollary \ref{prime_cor} trivially implies that 
for $\sigma := \lambda \otimes \mu$, with $\mu$ an arbitrary probability measure with 
$\supp \mu \subseteq BAD$, that for $\psi \in \mathcal{M}_{\A}$
\[\sigma(\{(\alpha,\gamma) \in [0,1)^2: \lVert p \alpha + \gamma \rVert \leq \psi(p) \text{ for i.m. $p \in \mathbb{P}$}\}) = 1.\]
This can be compared with a result in \cite[Theorem 3.1]{kp25}, respectively \cite[Theorem 1]{hr_twist}, where an analogous statement was established for $\mu$ having positive Fourier decay (such measures are known to exist with its support being contained in $BAD$ \cite{Kaufman}), with a restrictive spacing condition on the sequence $\A$ (that does not hold for e.g. the primes), and only proven for functions of the form $\psi(n) = n^{-\theta}, \theta < 1$. Furthermore, the ``almost sure'' statement is necessary in \cite{hr_twist,kp25}, while in Corollary \ref{prime_cor}, the result holds on \textit{any} fiber $\alpha \in BAD$.

\item Theorem \ref{thm_prime_like} is tailor-made for
general sets
\[\A_{\p} = \{n \in \mathbb{N}: \;\forall p \in \p\cap [1,n^{\rho}]: \gcd(n,p) = 1 \}\]
with $\mathcal{P}$ being a set of positive Dirichlet density, and $\rho < 1$. In these cases, one can straightforwardly establish conditions (I) and (II). In some special cases that satisfy a certain parity condition, we are also able to establish (III) by generalizing the approach of Ramar\'e and  Viswanadham \cite{ramare} (see Corollary  \ref{discr} below). It happens that we can find subsets of relative positive density of $\mathbb{P},\bS_2,\bl,\bS_2\cap \bl$, leading to the respective Corollaries. However, Theorem \ref{thm_prime_like} could be applied to more general sets of integers $\A_{\p}$, provided there is a way of proving (III).

\item The inclusion $\bigcap_{\psi \in \mathcal{M}_{\A}}K_{\A}(\psi) \subseteq BAD_{\mathcal{P}}$ arises from a generalization of the ideas of Kurzweil, and does not use the properties (I),(II), and (III) except for the asymptotic density of $\A$ encoded in (II).
In Proposition \ref{exploit_prime_counterex}, we provide an example that shows, in particular, that for $\A = \mathbb{P}$, we obtain
$\bigcap_{\psi \in \mathcal{M}_{\A}}\mathcal{K}_{\A}(\psi) \subsetneq BAD_{\mathbb{P}}$. This is achieved by the construction of an irrational $\alpha$ for which $\liminf_{k \to \infty}\frac{\varphi(q_k)}{q_k} = 0$, where $(q_k)_{k}$ denotes the sequence of convergent denominators of $\alpha$. While this does not 
rule out a clear cut-off in terms of a classical Diophantine condition that exists in the classical Kurzweil Theorem, it suggests that there is none: the question of whether $\alpha \in \bigcap_{\psi \in \mathcal{M}_{\A}}K_{\A}(\psi)$ depends most probably not only on its Diophantine properties, but also on the arithmetic properties of good approximations to $\alpha$.
In particular, we strongly believe that for $\A = \mathbb{P}$ (and similarly for the other sets $\A$ considered above), there exists no function $f = f_{\A}$ such that 
\[\bigcap_{\psi \in \mathcal{M}_{\A}}\mathcal{K}_{\A}(\psi) = BAD_f,\]
where $BAD_f := \{\lim_{n \to \infty} n f(n)\lVert n\alpha\rVert > 0\}$.
In particular, we do \textit{not} believe that the case $\A = \mathbb{P}$ can be used as a characterization of $BAD$, as it is the case for $\A = \mathbb{N}$ in Kurzweil's Theorem (ii), but this remains open. However, there exist non-trivial sets with positive lower density with this property, which will be discussed in the next section.

\item We remark that Fuchs and Kim \cite{fuchs} provided a full characterization of $\mathcal{K}(\psi)$ by carefully examining the original proof of Kurzweil. Since we follow the original proof for the set inclusion $\bigcap_{\psi \in \mathcal{M}_{\A}}\mathcal{K}_{\A}(\psi) \subseteq BAD_{\mathcal{P}}$, one can work out a necessary condition for $\alpha \in \mathcal{K}_{\A}(\psi)$ of a similar shape. However, since we cannot determine the exact set, a full description remains open. Further, we remark that Chaika and Constantine \cite{chaika} showed that if one does not consider the set of all monotonically decreasing functions $\mathcal{M}$, but only the subset $\mathcal{M}'$ that consists of functions where $q \mapsto q\psi(q)$ is monotonically decreasing, then $\bigcap_{ \psi \in \mathcal{M}'}\mathcal{K}(\psi)$ is a much larger set than $BAD$, and is in fact of full Lebesgue measure. A similar phenomenon might be possible for the generalization to the sets $\A$ above, but the question will not be pursued any further in this article.

\item We want to remark that the implications might also be interesting from a set respectively measure-theoretic point of view, which is as in Kurzweil's classical theorem: For \textit{fixed} $\psi \in \mathcal{M}_{\A}$, we always have $\lambda(\mathcal{K}_{\mathcal{A}}(\psi)) = 1$. However, after taking the (uncountable) intersection over $\mathcal{M}_{\A}$ we are left with a non-empty set, but this set that has measure zero. Note that the intersection cannot be reduced to a countable one, since then the measure would still be $1$.
\end{itemize}

\subsection{Kurzweil on sets with positive density}

As mentioned above, sequences such as the primes most probably do not lead to a characterization of $BAD$, in contrast to (ii) in Kurzweil's Theorem. In this section, we will provide criteria on $\A$ such that $BAD = \bigcap_{\psi \in \mathcal{M}_{\A}} \mathcal{K}_{\A}(\psi)$.
Clearly, this follows from Kurzweil's Theorem for sets $\A$ where $\mathbb{N} \setminus \A$ is finite, but we are interested in nontrivial examples. It turns out that for sets of positive asymptotic lower density,
that is, $\liminf_{x \to \infty} \frac{\#\{n \leq x: n \in \A\}}{x} > 0$,
demanding equidistribution along $\A$ is sufficient:

\begin{thm}\label{thm_pos_dens}
Let $\A \subseteq \N$ be an infinite set of integers with positive lower density, and assume for all $\alpha \in BAD$
that $(n\alpha)_{n \in \A}$ is uniformly distributed. Then 

\[\bigcap_{\psi \in \cM_{\A}}\mathcal{K}_{\A}(\psi) = BAD.\]
\end{thm}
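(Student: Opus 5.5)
The proof splits into the two inclusions $BAD \subseteq \bigcap_{\psi \in \cM_{\A}}\mathcal{K}_{\A}(\psi)$ and $\bigcap_{\psi \in \cM_{\A}}\mathcal{K}_{\A}(\psi) \subseteq BAD$.

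\textbf{First inclusion.} Fix $\alpha \in BAD$ and $\psi \in \cM_{\A}$. I will run Kurzweil's second-moment argument along $\A$.

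First I reduce to a nice $\psi$. Since $\A$ has positive lower density $c>0$, partial summation gives
\[\sum_{n \in \A,\, n\le x}\psi(n) \gg \sum_{n\le x}\psi(n)\]
along dyadic blocks. Hence $\sum_{n\in\A}\psi(n)=\infty$ is equivalent to $\sum_k 2^k\psi(2^k)=\infty$. I may also replace $\psi$ by $\min(\psi(n),1/n)$ without losing divergence, as in Kurzweil's original proof.

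Next I group the targets dyadically. Set $E_k=\{\gamma: \exists n\in\A\cap[2^k,2^{k+1}),\ \lVert n\alpha+\gamma\rVert\le \psi(2^{k+1})\}$, a union of intervals centered at $-n\alpha$. I will show that $\lambda(E_k)\gg \min(1,2^k\psi(2^{k+1}))$ and that the $E_k$ are quasi-independent on average. Then the Erd\H{o}s--R\'enyi/Chung form of Borel--Cantelli gives positive measure for $\limsup E_k$. Finally, full measure follows from a zero-one law for twisted limsup sets, namely invariance of $T_{\A}(\psi,\alpha)$ under $\gamma\mapsto\gamma+\alpha$ up to a shift of $\psi$, together with ergodicity of $R_\alpha$; alternatively, a local version via Lebesgue density.

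\textbf{Measure bound.} Because $\alpha\in BAD$, the points $\{n\alpha\}$, $n\le 2^{k+1}$, are $\gg 2^{-k}$-separated. So the intervals making up $E_k$ overlap only boundedly, and $\lambda(E_k)\asymp \#(\A\cap[2^k,2^{k+1}))\,\psi(2^{k+1})$. This is $\asymp 2^k\psi(2^{k+1})$ by the positive lower density. Note that the upper density is at most $1$, so the comparison holds in both directions.

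\textbf{Main obstacle: quasi-independence.} I need
\[\lambda(E_k\cap E_l)\ll \lambda(E_k)\lambda(E_l)+\text{(summable error)}.\]
This is where I plan to use the hypothesis that $(n\alpha)_{n\in\A}$ is uniformly distributed. For $k<l$, $E_k$ is a union of about $2^k$ intervals of length $\psi(2^{k+1})$. I want the points $-m\alpha$, $m\in\A\cap[2^l,2^{l+1})$, to be well distributed inside each such interval. Counting differences, this amounts to the points $(m-n)\alpha$ falling near $0$, with $n,m$ ranging over $\A$. Simply dropping the restriction $n\in\A$ bounds the count from above by the full-$\N$ quantity, since $\lambda(E_k)$ is comparable to its $\N$-analogue thanks to positive density. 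Then Kurzweil's estimate for $\A=\N$ with $\alpha\in BAD$ gives the upper bound.

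So the plan is to bound $\lambda(E_k\cap E_l)\le\lambda(E_k^{\N}\cap E_l^{\N})$ and compare $E_k$ with its $\N$-analogue $E_k^{\N}$ via the density. The uniform distribution hypothesis enters in the lower bound for $\lambda(E_k)$: when $\psi$ is large, the separation argument alone is too weak, and uniform distribution gives $\lambda(E_k)\to$ the correct proportion. This allows capping $\psi$ without loss. Ensuring the Chung ratio stays bounded here is the delicate step.

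\textbf{Second inclusion.} Let $\alpha\notin BAD$. Then there are convergent denominators $q_k$ with $q_k\lVert q_k\alpha\rVert\to0$ arbitrarily fast along a subsequence. I follow Kurzweil's construction. Take $\psi$ constant, equal to roughly $\lVert q_k\alpha\rVert$-scale, on long blocks $[q_k, Nq_k]$ chosen so that $\sum_{n\in\A}\psi(n)$ diverges; positive lower density makes this sum comparable to the sum over $\N$.

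On such a block the points $n\alpha$ cluster near only about $q_k$ locations, because $n\alpha$ depends essentially on $n \bmod q_k$ up to a drift of size $N\lVert q_k\alpha\rVert$. Consequently the union of targets has measure $\ll q_k(\psi+N\lVert q_k\alpha\rVert)$, which is summable along the chosen subsequence. This covering bound uses no property of $\A$, since $\A\subseteq\N$. Convergence Borel--Cantelli then gives $\lambda(T_{\A}(\psi,\alpha))=0$, so $\alpha\notin\mathcal{K}_{\A}(\psi)$.
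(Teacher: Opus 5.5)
\textbf{Gap: the step from positive measure to full measure.} Your second-moment bound is the paper's: drop the restriction to $\A$, count Bohr sets for $\alpha\in BAD$, and compare back via density. Your second inclusion is the same Kurzweil-type construction as Proposition~\ref{counterex_non_bad}. The problem is the upgrade to full measure, and your primary mechanism fails there. The statement $\gamma+\alpha\in T_{\A}(\psi,\alpha)$ means $\lVert m\alpha+\gamma\rVert\le\psi(m-1)$ for infinitely many $m\in\A+1$. So $R_\alpha$ carries the limsup set for $\A$ to the one for the shifted set $\A+1$, and there is no invariance unless $\A$ is cofinite. The paper notes that no zero-one law holds for general $\A$: for $\A=\{n:\{n\alpha\}\in[0,1/2]\}$ the limsup set has measure exactly $1/2$.

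What you actually need is the local version you only name. For every interval $I$ and all sufficiently late blocks, you must show $\sum_n\lambda(A_n\cap I)\ge K^{-1}\lambda(I)\sum_n\lambda(A_n)$. Chung--Erd\H{o}s restricted to $I$ then gives $\lambda(\limsup A_n\cap U)\gg\lambda(U)^2$ for every open $U$. Proposition~\ref{BDVdensitylemma} upgrades this to full measure (Lemmas~\ref{ramirez_lem} and~\ref{bdv_applied}). That local lower bound is precisely where the hypothesis that $(n\alpha)_{n\in\A}$ is uniformly distributed enters, through equidistribution of $(n\alpha)$ along each block of $\A$.

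You instead assign the hypothesis to the lower bound for $\lambda(E_k)$ and to the capping of $\psi$, where it is not needed. The function $\min\{\psi(n),\eta/n\}$ still diverges by the elementary argument. Then the $\gg_\alpha 2^{-k}$-separation of $\{n\alpha\}_{n\le 2^{k+1}}$ already makes the intervals in $E_k$ disjoint for small $\eta$. As written, your argument proves only $\lambda(T_{\A}(\psi,\alpha))>0$.

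\textbf{Two smaller repairs.} First, positive lower density does not give $\#(\A\cap[2^k,2^{k+1}))\asymp 2^k$ for every $k$; for example, $\A=\bigcup_k[4^k,2\cdot 4^k)$ has empty odd blocks. To pass uniform distribution down to blocks, each block must carry a fixed proportion of the elements of $\A$ up to its right endpoint. The paper achieves this by switching to base $b\ge\max\{\lceil 8/\delta\rceil,4\}$.

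Second, in the counterexample, $\psi\approx\lVert q_k\alpha\rVert$ on $[q_k,Nq_k]$ cannot work. Divergence forces $Nq_k\psi\gtrsim 1$, which is then exactly your drift term $Nq_k\lVert q_k\alpha\rVert$, so the covering bound is not summable. Instead take $q_{n_k}\lVert q_{n_k}\alpha\rVert\le 4^{-k}$ and $\psi\equiv 2^{-k}/q_{n_k}$ on the block ending at $2^kq_{n_k}$. Each block then contributes $\gg 1$ to $\sum_{n\in\A}\psi(n)$, while its targets lie in $q_{n_k}$ intervals of length $O(2^{-k}/q_{n_k})$.
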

Again, the above statement is very general; obviously, at least density of $(n\alpha)_{n \in \A}$ (within $[0,1)$) for all $\alpha \in BAD$ is necessary: Otherwise, we find $\alpha \in BAD$ and an interval $I \subseteq [0,1)$, with $\{n\alpha\} \notin I$ for all $n \in \mathbb{N}$. Consequently, $\alpha \notin \mathcal{K}_{\A}(\psi)$ for any $\psi$ with $\psi(n) \to 0$, and such $\psi \in \mathcal{M}_{A}$ clearly exists.\\

    Note that when $\A$ has positive lower density, then (by an easy application of summation by parts) $\cM_{\A} = \cM_{\N} (= \cM)$. By making use of estimates on exponential sums twisted by multiplicative functions, we apply Theorem \ref{thm_pos_dens} to a class of sets that are multiplicatively structured in the following way:

\begin{cor}\label{cor_pos_dens_theor}
    Let $m \geq 1$ and let $f: \mathbb{N} \to \mu_m \cup \{0\} \subseteq \mathbb{C}$ be a multiplicative function, where
$\mu_m := \{x \in \mathbb{C}: x^m = 1\}$ denotes the set of $m$-th roots of unity.
Further let $a \in \mu_m \cup \{0\}$ be such that the set
\[\mathcal{A}_{a,f} := \left\{n \in \mathbb{N}: f(n) = a\right\}\]
has positive lower density.
Then
\begin{equation}\label{full_kw}\bigcap_{\psi \in \cM}\mathcal{K}_{\A}(\psi) = BAD.\end{equation}
In particular, \eqref{full_kw} holds true for:
    \begin{itemize}
        \item[(a)] The set of squarefree-numbers $\mathcal{A} = \{n \in \mathbb{N}: \mu^2(n) = 1\} = \{n \in \mathbb{N}: p^k \Vert n \implies k = 1\}$.
         \item[(b)] The set of numbers where the number of prime factors, counted without multiplicity, is congruent $a \pmod m$, i.e.
        $\mathcal{A} = \{n \in \mathbb{N}: \omega(n) \equiv a \pmod m\}$.
        \item[(c)] The set of numbers where the number of prime factors, counted with multiplicity, is congruent $a \pmod m$, i.e.
        $\mathcal{A} = \{n \in \mathbb{N}: \Omega(n) \equiv a \pmod m\}$.
        \item[(d)] Sets that contain an arithmetic progression, i.e. if for fixed $m,a \in \mathbb{N}$,
        $\A \supseteq \{bm + a: b \in \mathbb{N}\}$.
    \end{itemize}
    \end{cor}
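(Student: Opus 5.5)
By Theorem \ref{thm_pos_dens}, and since $\A_{a,f}$ has positive lower density by assumption, the plan is to show only that $(n\alpha)_{n \in \A_{a,f}}$ is uniformly distributed for every $\alpha \in BAD$. In fact I would prove this for every irrational $\alpha$. By Weyl's criterion it suffices to show, for every fixed integer $h \neq 0$,
\[\sum_{\substack{n \leq x\\ n \in \A_{a,f}}} e(hn\alpha) = o\big(\#\{n \leq x: n \in \A_{a,f}\}\big) = o(x),\]
where the last equality uses positive lower density again.

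\textbf{Decomposition by characters.} Since $f$ takes values in $\mu_m \cup \{0\}$, I would detect the level set $f(n) = a$ with the finite orthogonality relations on $\mu_m$. If $a \in \mu_m$, then
\[\mathds{1}_{[f(n) = a]} = \frac{1}{m}\sum_{j=0}^{m-1} \overline{a}^{\,j} f(n)^j,\]
where we interpret $f(n)^0 = \mathds{1}_{[f(n) \neq 0]}$ so that $n$ with $f(n) = 0$ are excluded. If $a = 0$, then
\[\mathds{1}_{[f(n)=0]} = 1 - \mathds{1}_{[f(n)\neq 0]} = 1 - |f(n)|^m.\]
Each function $g_j := f^j$ (including $|f|^m$) is multiplicative and bounded by $1$. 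The exponential sum therefore reduces to a finite linear combination of the sums $\sum_{n \leq x} g(n) e(hn\alpha)$ with $g$ bounded multiplicative, plus possibly the pure sum $\sum_{n \le x} e(hn\alpha)$, which is $O_{h,\alpha}(1)$ for irrational $\alpha$.

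\textbf{The key input.} This is the Daboussi--Delange theorem: for every bounded multiplicative $g$ and every irrational $\beta$,
\[\sum_{n \leq x} g(n) e(n\beta) = o(x).\]
It also follows from the Kátai--Bourgain--Sarnak--Ziegler criterion combined with the equidistribution of $(pn\beta)$. Applying it with $\beta = h\alpha$, which is irrational, kills each term, so every $g_j$ contributes $o(x)$. This yields uniform distribution for all irrational $\alpha$, in particular for $\alpha \in BAD$, and Theorem \ref{thm_pos_dens} gives \eqref{full_kw}. If the paper prefers quantitative control, I would instead invoke the Montgomery--Vaughan bound, which gives an explicit saving when $\alpha$ has good rational approximations of moderate denominator; badly approximable $\alpha$ supply such approximations at every scale. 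I expect this input to be the only nontrivial step, and citing Daboussi--Delange settles it. The combinatorial reduction is routine.

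\textbf{The special cases.} These follow by exhibiting suitable $f$ and checking positive lower density.

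(a) Take $f = \mu^2$, $m = 1$, $a = 1$. The squarefree numbers have density $6/\pi^2$.

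(b) Take $f(n) = \zeta^{\omega(n)}$ with $\zeta = e(1/m)$, and use the level $\zeta^{a}$. The density is $1/m$ by the Delange/Halász theorem, since for $\zeta \neq 1$ the mean value of $\zeta^{\omega(n)}$ is $0$.

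(c) Take $f(n) = \zeta^{\Omega(n)}$, which is completely multiplicative. The density is again $1/m$.

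(d) Choosing $\A' = \{bm + a: b \in \mathbb{N}\} \subseteq \A$, we have $\mathcal{K}_{\A'}(\psi) \subseteq \mathcal{K}_{\A}(\psi)$ and $\cM_{\A'} = \cM = \cM_{\A}$, so the inclusion $BAD \subseteq \bigcap \mathcal{K}_{\A}$ follows from the case of $\A'$. The upper bound $\bigcap_{\psi \in \cM_{\A}} \mathcal{K}_{\A}(\psi) \subseteq \bigcap_{\psi \in \cM} \mathcal{K}(\psi) = BAD$ holds for any $\A$ by Kurzweil's Theorem (ii). For $\A'$ itself one can take $f(n) = \chi$-type indicators: the progression $a \bmod m$ is a level set of a combination of Dirichlet characters when $\gcd(a,m)=1$. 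In general it is easiest to verify uniform distribution of $(n\alpha)_{n \in \A'}$ directly, since $(bm\alpha)_b$ is equidistributed for irrational $\alpha$.
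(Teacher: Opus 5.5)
Your proof is correct and follows the paper's route: detect the level set $f(n)=a$ by orthogonality on $\mu_m$, kill each twisted sum $\sum_{n\le x} f(n)^j e(hn\alpha)$ with Daboussi's theorem, and feed the resulting uniform distribution into Theorem \ref{thm_pos_dens}. Two of your details are more careful than the written proof. Setting $f^0 := |f|^m = \mathds{1}_{[f\neq 0]}$ keeps the orthogonality identity valid at zeros of $f$, which is needed e.g.\ for $f=\mu^2$. For (d), you pass to the progression $\A'=\{bm+a\}$ and use $\mathcal{K}_{\A'}(\psi)\subseteq\mathcal{K}_{\A}(\psi)\subseteq\mathcal{K}(\psi)$ with $\cM_{\A'}=\cM_{\A}=\cM$; this is the right move, since $n\mapsto e(n/m)$ is not multiplicative and a superset of a progression need not be a level set.
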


\subsubsection*{Remarks and open questions}
\begin{itemize}
    \item 
    We remark again that many of the above results allow us to improve upon known results in denominator-restricted Diophantine approximation when the inhomogeneous parameter is randomized: As the special case of square-free numbers shows, we obtain for $\alpha \in BAD$, 
    \[ \lVert n\alpha + \gamma\rVert \leq \psi(n) \text{ for i.m. } n: \mu^2(n) = 1\]
    for almost every $\gamma$, provided $\sum_{n \in \mathbb{N}} \psi(n) = \infty$.
    In particular, this allows for approximations $\psi(n) = \frac{1}{n \log n \log \log n}$.
    This should be compared with the works of Harman \cite{harm_sqf} and Heath--Brown \cite{HB_squarefree} who showed that for any irrational $\alpha$,
    \[ \lVert n\alpha + \gamma \rVert \leq n^{-\tau + \varepsilon} \text{ for i.m. } n: \mu^2(n) = 1\]
    for $\tau = 1/2$, $\gamma$ arbitrary  \cite{harm_sqf} , and $\tau = 2/3, \gamma = 0$ \cite{HB_squarefree}, respectively. Again, we see that considering $\gamma$ metric allows for a much better approximation quality.\\

\item We emphasize that we obtain a Kurzweil-type result only for sets $\A$ that have positive lower density. The assumption of positive density is crucially used in our proof method, especially for showing 
$\bigcap_{\psi \in \cM_{\A}}{\mathcal{K}}_{\A}(\psi) \subseteq BAD.$
Thus, we raise the following question:

\begin{que}
    Does there exist a set $\A$ with asymptotic density $0$ where $\bigcap_{\psi \in \cM_{\A}}\mathcal{K}_{\A}(\psi) = BAD$? 
\end{que}

\item Finally, we remark that in all sets $\A$ considered in Theorems \ref{thm_prime_like} and \ref{thm_pos_dens}, we had
$BAD \subseteq \bigcap_{\psi \in \cM_{\A}}\mathcal{K}_{\A}(\psi)$. Note that these sets $\A$ are all at least as dense as the primes. As discussed, there are sets $\A$ where this inclusion does not hold, even with positive density; however one might wonder whether there are arbitrarily sparse sets $\A$ where we still have $BAD \subseteq \bigcap_{\psi \in \cM_{\A}}\mathcal{K}_{\A}(\psi)$. This is however ruled out: For lacunary sequences (that is, sequences with $\liminf_{n \to \infty} a_{n+1}/a_n > 1$), it is known \cite[Theorem 1.3]{schmidt_twist_lacunary} that the set of $\alpha$ such that $a_n\alpha$ is not dense in $[0,1)$ is winning in the sense of Schmidt \cite{schmidt_game}. Since the same holds for $BAD$, and intersecting at most countably many Schmidt winning sets gives a Schmidt winning set \cite{schmidt_game}, which in turn always has full Hausdorff dimension, we obtain the existence of a set $S \subseteq BAD$ of full Hausdorff dimension such that for $\A$ lacunary,
$\bigcap_{\psi \in \cM_{\A}}\mathcal{K}_{\A}(\psi) \cap S = \emptyset$.
However, this does not rule out the possibility of $\bigcap_{\psi \in \cM_{\A}}\mathcal{K}_{\A}(\psi)$ being non-empty, thus we ask the following question:

\begin{que}
    Does there exist a set $\A$ such that $\bigcap_{\psi \in \cM_{\A}}\mathcal{K}_{\A}(\psi) = \emptyset$? If not, what can be said about nontrivial lower bounds on the Hausdorff dimension of $\bigcap_{\psi \in \cM_{\A}}\mathcal{K}_{\A}(\psi)$?
\end{que}
\end{itemize}

\subsection{A case study: sums of squares}

We conclude the introductory section with an application of the above results to the well-studied sets 
\[\bS_k := \{n \in \mathbb{N}: \exists (n_1,\ldots,n_k)\in \mathbb{N}_0^{k}: n_1^2 + \ldots + n_k^2 = n\}.\]
By Lagrange's four square theorem, we have for all $k \geq 4$ that $\bS_k = \mathbb{N}$, in which case the classical Kurzweil Theorem shows 
\[\bigcap_{\psi \in \cM_{\bS_k}}\mathcal{K}_{\bS_k}(\psi) = BAD.\]
In the case of $k = 3$, Legendre's three square theorem implies that
\[\bS_3 = \mathbb{N} \setminus \{n \in \mathbb{N}: n=4^{a}(8b+7), a,b \in \mathbb{N}_0\}.\]
We see that $\bS_3$ contains the arithmetic progression 
$\{4n+2, n \in \mathbb{N}_0\}$, thus by Corollary \ref{cor_pos_dens_theor}(d), we have that also when $k = 3$,
\[\bigcap_{\psi \in \cM_{\bS_k}}{\mathcal{K}}_{\bS_k}(\psi) = BAD.\]
When $k = 2$, we may apply Corollary \ref{s2_cor}, showing that 
\[BAD \subseteq \bigcap_{\psi \in \cM_{\bS_k}}\mathcal{K}_{\bS_k}(\psi).\]
Unfortunately, the tools employed in this article do not allow us to make any significant statement when $k =1$, and we ask this as the final open question of this article:

\begin{que}
    Characterize $\bigcap_{\psi \in \cM_{\bS_1}}\mathcal{K}_{\bS_1}(\psi)$ as precisely as possible. In particular, is it true that 
    $BAD \subseteq \bigcap_{\psi \in \cM_{\bS_1}}\mathcal{K}_{\bS_1}(\psi)$?
\end{que}

\subsection*{Acknowledgements}
This work was funded in whole, or in part, by the Austrian Science Fund (FWF). The author was supported by FWF project 10.55776/ESP5134624. He would like to thank Andrei Shubin for encouraging him to provide a general framework that significantly improved the quality of this article. Furthermore, he would like to thank Andrei Shubin and Vivian Kuperberg for related discussions on sums of two squares, and Olivier Ramar\'e for an enlightening discussion about his recent work and possible generalizations. Further, the author would like to thank Victor Beresnevich and Emmanuel Kowalski for discussions on twisted Diophantine approximation, respectively, sieve theory. Finally, he thanks Christoph Aistleitner and Olivier Ramar\'e for useful comments on an earlier version of this manuscript.

\section{Some ideas about the proof and plan of the paper}\label{sec_heur}

In this section, we present some of the core ideas that go into the proofs of the main results of this article.
We start to treat the second inclusion in \eqref{kw_ii_sieve}, as well as $\bigcap_{\psi \in \cM_{\A}}\mathcal{K}_{\A}(\psi) \subseteq BAD$ in Theorem \ref{thm_pos_dens}. This follows from a careful generalisation of Kurzweil's original argument; the somewhat crucial point why a Khintchine-type result fails for well-approximable numbers is that certain Diophantine Bohr sets, i.e. sets
$\{n \leq N: \lVert n\alpha\rVert \leq t_N\}$, contain for some parameters $(N,t_N)$ substantially more points than the expected $2 Nt_N$ many -- see Section \ref{counterex_sec} for details on how to come up with an actual counterexample.\\

For the remainder of the section, we focus on the first set inclusion, i.e., $BAD \subseteq \bigcap_{\psi \in \mathcal{M}_{\A}}\mathcal{K}_{\A}(\psi)$, which is the main difficulty.
We will first start with the ideas for Theorem \ref{thm_pos_dens}, since these ideas are used combined with further input in Theorem \ref{thm_prime_like} again.\\

Put in set-theoretic language, we aim to show that for fixed $\alpha \in BAD$ and fixed $\psi \in \mathcal{M}_{\A}$, we have 
\[\lambda(\limsup_{n \to \infty} A_n),\quad \text{where}\quad A_n := [n\alpha - \psi(n),n\alpha + \psi(n)] \pmod 1,\; n \in \A.\] If the set system were truly stochastically independent, we could apply the divergence Borel--Cantelli Lemma and would be done. However, the set system is far from independent, and thus the aim is to estimate the second moments instead: We will bound the ratio \[r_{\A}(N) := \frac{\sum_{n,m \in [0,N]\cap \A}\lambda(A_n \cap A_{m})}{\left(\sum_{n \in [0,N]\cap \A} \lambda(A_m)\right)^2},\quad \text{as } N \to \infty\] as good as possible from above. If we could establish \textit{quasi-independence on average} (QIA), i.e., $r_{\infty}(\A) := \liminf_{N \to \infty} r_{\A}(N) \leq C$, then we could apply the Chung--Erd\H{o}s inequality, and would obtain $\lambda(\limsup_{n \to \infty} A_n) \geq 1/C$. Optimally, we would wish for 
$C = 1$, but this is for various reasons (that are explained later) difficult to achieve. If we knew a zero-one law, i.e. $\lambda(\limsup_{n \to \infty} A_n) \in \{0,1\}$ for any $(\psi,\alpha,\A)$, then this would clearly imply the statement. Such a law is known due to Cassels \cite{cass01} in the Khintchine setup, but can be easily ruled out here: We could simply choose for fixed $\alpha$ the set $\A := \{n \in \mathbb{N}: \{n\alpha\} \in [0,1/2]\}$, and see that in fact 
$\lambda(\limsup_{n \to \infty} A_n) = 1/2$ for $\psi \in \mathcal{M}_{\A}$.

Thus, we need a different approach to push towards full measure. This is established by following the ideas developed by Allen--Ram\'irez \cite{allen_ram} that build on the work of Beresnevich--Dickinsion--Velani \cite{BDV}.
In a nutshell, this method allows for employing a generalization of the Lebesgue density Theorem to go from $r_{\infty}(\A) < \infty$ to $\lambda(\limsup_{n \to \infty} A_n) = 1$, under the additional assumption that the set system $(A_n)_{n \in \A}$ equidistributes in $[0,1]$. This part of the argument is established in Section \ref{QIA_section}. 
This is actually the only point where the assumption of equidistribution from Theorem \ref{thm_pos_dens} is used. After this, we use the trivial bounds 
\[\sum_{n,m \in [0,N]\cap \A}\lambda(A_n \cap A_{m}) \leq \sum_{n,m \leq N}\lambda(A_n \cap A_{m}), \quad \sum_{n \in [0,N]\cap \A} \lambda(A_m) \gg \delta \sum_{n \leq N} \lambda(A_m),\] where $\delta$ denotes the lower density of $\A$. This shows 
$r_{\infty}(\A) \ll \frac{1}{\delta^2} \cdot r_{\infty}(\mathbb{N})$, reducing the question to the classical Kurzweil setup that can be solved rather straightforwardly -- see Section \ref{thm_pos_dens} for details.\\

In order to prove Theorem \ref{thm_prime_like}, we need to work much harder in the number-theoretic aspect. We will present here the ideas for the case where $\A = \mathbb{P}$, which is actually Corollary \ref{prime_cor}; this covers the core ideas while providing the cleanest setup. As explained before, we aim to show $r_{\infty}(\mathbb{P}) < \infty$; 
After reducing to the case where $\psi$ is constant on dyadic blocks (see Section \ref{sec_wlog_psi}), we essentially are left to bound $
\sum_{p,p' \leq N}\lambda(A_p \cap A_{p'})$, and by elementary estimates, we reach
\begin{equation}\label{symmetric_over}\sum_{p,p' \leq N}\lambda(A_p \cap A_{p'}) \ll \sum_{h \leq N} \mathds{1}_{[\lVert h\alpha \rVert \leq \psi(N)]}\#\{p,p' \leq N: p-p' = h\}.\end{equation}

The quantity on the right-hand side above is clearly related to the problem arising from the (asymptotic) Twin Prime Conjecture (or more generally, Hardy--Littlewood $k$-tuple Conjecture), which is widely open. However, upper-bound sieves work smoothly for this setup, showing that 
\begin{equation}\label{upper_sieve_intro}\#\{p,p' \leq N: p-p' = h\}
\leq C \frac{N}{(\log N)^2} \frac{h}{\varphi(h)}\end{equation}
for some absolute constant $C$ (we replaced for convenience the singular series by the function $\frac{h}{\varphi(h)}$). The argument to achieve \eqref{upper_sieve_intro} in the more general form necessary for Theorem \ref{thm_prime_like} is presented in Section \ref{sec_sieve}. It is quite clear that replacing the value of $C$ in \eqref{upper_sieve_intro} with $1$ is, with currently developed tools, impossible, which makes this the part of Theorem \ref{thm_prime_like} where it is the most crucial to have the argument of Allen--Ram\'irez available that allows for losing constants. 
With \eqref{upper_sieve_intro} at hand, we are left to prove that the function $\frac{h}{\varphi(h)}$ is bounded on average over the Bohr set $\{h \leq N: \lVert h\alpha \rVert \leq \psi(N)\}$. Here we may follow ideas established in the author's previous works \cite{bad_rough,HK} that build on the fact that such Bohr sets have an additive structure, and in particular can be embedded efficiently into a rank $2$ arithmetic progression. Averaging in such a structure (see e.g., the pioneering work of Chow \cite{chow}) is then straightforward, and the result follows, provided the number of elements (i.e. $\approx N \psi(N)$) is not too small. It is this part where it comes in crucially that $\alpha$ is assumed to be badly approximable. The argument is presented in Section \ref{sec_avg_mult}. There might be cases where the Bohr set is too small to get such a result: for instance, the Bohr set can consist of exactly one element $h$, and unfortunately, $h/\varphi(h)$ might be unusually large. However, the contribution of such instances is of negligible order, and this can be found in Section \ref{sec_var} where finally $r_{\infty}(\mathbb{P}) < \infty$ is shown.\\

In fact, the problem is more subtle than presented above: One has to compute an asymmetric version of \eqref{symmetric_over}, that is, one needs to control
$\sum_{p \approx 2^k, p' \approx 2^{\ell}}\lambda(A_p \cap A_{p'})$
for $k\leq \ell$ with $k, \ell$ of potentially different sizes.
The problem is that the upper-bound sieve employed can only establish
\[\#\{p \approx 2^k, p' \approx 2^{\ell}: p-p' = h\} \ll \frac{2^k}{\log (2^k)^2}\frac{h}{\varphi(h)},\]
while the expected size for the left-hand side would be $\frac{2^k}{\log (2^k) \log(2^{\ell})}\frac{h}{\varphi(h)}$. If $\ell \ll k$, then this is just another loss in the constant, which we can afford, but if $\ell$ is much bigger than $k$, this approach clearly fails. However, in such a case we argue differently: We have that for $p \approx 2^k$, the set $A_p$ is an interval of length $2\psi(k)$, which can be assumed to be (see Proposition \ref{wlog_psi}) $\gg \frac{1}{2^{2k}}$. If $\ell$ is now much bigger than $k$, then such an interval looks from the point of view of $2^{\ell}$ almost ``of constant size'', and some quantitative equidistribution result suffices. More precisely, an arbitrary power-saving in the discrepancy of $(p\alpha)_{p \in [2^{\ell},2^{\ell+1}]}$ is enough. This is assumption (III) in Theorem \ref{thm_prime_like}, and holds for primes due to the works of Vinogradov \cite{vino_primes} and Vaughan \cite{vaughan_primes}, and was recently generalized by Ramar\'e and Viswanadham \cite{ramare} to the other setups $\bS_2,\mathbb{L},\mathbb{L} \cap \bS_2$ considered in Corollaries \ref{s2_cor} -- \ref{intersec_cor}. The latter is treated in more detail in Section \ref{subsec_prime_s2}.

\section{Prerequisites}

\subsubsection{Assumptions for $\psi$}\label{sec_wlog_psi}

\begin{prop}[Helpful assumptions for $\psi$]\label{wlog_psi}
Let $\A \subseteq \mathbb{N}$ satisfy either assumption (II) or have positive lower density. Then for any $\theta \in \cM_{\A}$, there exists $\psi \in \cM_{\A}$ with $K_{\A}(\psi) \subseteq K_{\A}(\theta)$ such that 
    \item[(i)] \begin{equation}\label{wlog_i}\psi(n) \geq \frac{1}{n^2}\,\quad  \forall n \in \A.\end{equation}
    \item[(ii)] If $\A$ satisfies (II), then for any $\eta > 0$, \begin{equation}\label{wlog_ii(1)}\psi(n) \leq \frac{\eta f_{\p}(n)}{n}.\end{equation}
    If $\A$ is of positive lower density, then for any $\eta > 0$, \begin{equation}\label{wlog_pos}\psi(n) \leq \frac{\eta}{n}.\end{equation}
    \item[(iii)] $\psi$ is constant on dyadic ranges, i.e., there exist $(\psi_k)_k$ such that
    \begin{equation}\label{wlog_iii}\psi_{|[2^k,2^{k+1})} \equiv \psi_k,\quad k \in \N.\end{equation}
\end{prop}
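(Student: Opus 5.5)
We construct $\psi$ from $\theta$ in three successive modifications. Each step keeps the function monotonically decreasing, keeps $\sum_{n\in\A}\psi(n)=\infty$, and either only decreases the function, or increases it only in a way that cannot enlarge $\mathcal{K}_{\A}$. The basic mechanism is monotonicity: if $\psi\le\theta$ pointwise on $\A$, then $T_{\A}(\psi,\alpha)\subseteq T_{\A}(\theta,\alpha)$, hence $\mathcal{K}_{\A}(\psi)\subseteq\mathcal{K}_{\A}(\theta)$. So the main work is to produce a function below $\theta$ that still diverges along $\A$.

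\emph{Upper truncation (ii).} Set $\psi_1(n):=\min\{\theta(n),\,g(n)\}$, with $g(n)=\eta f_{\p}(n)/n$ in case (II) and $g(n)=\eta/n$ in the positive-density case. Since $n\mapsto f_{\p}(n)/n$ is decreasing, $\psi_1$ is decreasing. To check divergence, split $\A$ according to whether $\theta(n)\le g(n)$. If the sum of $\theta$ over the first part diverges, we are done. Otherwise $\sum_{n\in\A}\psi_1(n)\ge\sum_{n\in\A,\,\theta(n)>g(n)}g(n)$, which is problematic because this set could be sparse. The standard remedy is Kurzweil's/Cassels' trick. Since $\theta$ is monotone, $\{n:\theta(n)>g(n)\}$ is not a single interval, but on dyadic blocks we can argue as follows. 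In every dyadic block $[2^k,2^{k+1})$ where $\theta(2^{k+1})>g(2^k)$, the whole block satisfies $\psi_1\asymp g$ up to constants. The sum of $g$ over $\A$ in such a block is $\asymp \eta f_{\p}(2^k)\,\#(\A\cap[2^k,2^{k+1}))/2^k\asymp\eta$ by (II), since $f_{\p}(x)\prod_{p\le x,p\in\p}(1-1/p)\asymp1$. In the positive-density case the block sum is likewise $\asymp\eta$ (block counts are $\gg 2^k$ infinitely often on average; apply summation by parts). So either infinitely many such blocks occur, each contributing $\gg\eta$, or eventually $\psi_1$ agrees with $\theta$ up to boundary blocks. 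The \emph{main obstacle} is the blocks where the comparison switches in the middle of a block. This is handled by comparing with $\min\{\theta(2^{k+1}),g(2^{k+1})\}$ on the whole block and using the regularity $\#(\A\cap[2^k,2^{k+1}))\asymp\#(\A\cap[2^{k+1},2^{k+2}))$ from (II), so that a decreasing function's block sums are comparable to one shift. In the positive-lower-density case, where single blocks can be thin, we instead use that $\sum_{n\in\A}\psi=\infty\iff\sum_n\psi(n)=\infty$ for monotone $\psi$, noted after Theorem \ref{thm_pos_dens}, and run the argument over $\N$.

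\emph{Dyadic constancy (iii).} Replace $\psi_1$ by $\psi_2(n):=\psi_1(2^{k+1})$ for $n\in[2^k,2^{k+1})$. This function is decreasing and satisfies $\psi_2\le\psi_1$. Its block sum over $\A\cap[2^k,2^{k+1})$ is at least $\psi_1(2^{k+1})\#(\A\cap[2^k,2^{k+1}))$, which by the block regularity above is $\gg$ the $\psi_1$-sum over the next block. Hence divergence is preserved.

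\emph{Lower bound (i).} Finally set $\psi(n):=\max\{\psi_2(n),\,1/2^{2(k+1)}\}$ on $[2^k,2^{k+1})$. This is decreasing, constant on blocks, satisfies $\psi(n)\ge n^{-2}$, and preserves (ii) once $n$ is large, since $n^{-2}\le\eta/n$. It is no longer below $\theta$, but the increase is harmless. We have $\sum_{n\in\A}\psi(n)\ge\sum_{n\in\A}\psi_2(n)=\infty$. Moreover, the set of $\gamma$ with $\lVert n\alpha+\gamma\rVert\le 4n^{-2}$ for infinitely many $n$ is null by convergence Borel--Cantelli, because $\sum n^{-2}<\infty$. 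Hence $T_{\A}(\psi,\alpha)\subseteq T_{\A}(\psi_2,\alpha)\cup N_\alpha$ with $\lambda(N_\alpha)=0$. Therefore $\lambda(T_{\A}(\psi,\alpha))=1$ implies $\lambda(T_{\A}(\psi_2,\alpha))=1$, giving $\mathcal{K}_{\A}(\psi)\subseteq\mathcal{K}_{\A}(\psi_2)\subseteq\mathcal{K}_{\A}(\theta)$.
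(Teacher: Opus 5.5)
Your proposal is correct in substance. It uses the same three devices as the paper: truncation by $\min\{\theta,g\}$, the step function $\theta(2^{k+1})$ on $[2^k,2^{k+1})$, and convergence Borel--Cantelli for an $O(n^{-2})$ enlargement. It is organized differently in a few worthwhile ways.

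The paper treats (i), (ii), (iii) separately, each starting from $\theta$, and leaves their combination implicit; its $\theta+n^{-2}$ is not dyadically constant. Your order (ii), then (iii), then a block-constant floor produces a single $\psi$ with all three properties, which is what the statement asks for.

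For divergence in (ii), the paper splits on whether $C=\{n\in\A:\theta(n)>g(n)\}$ is finite. If $C$ is infinite, sparse $n_k\in C$ give windows $[n_k/2,n_k]$ on which $\psi_1\ge g(n_k)$, each contributing $\gg\eta$ by (II). With that dichotomy, your ``main obstacle'' of switching in mid-block never arises. Your block bound
$\sum_{n\in D_k}\psi_1(n)\ge\mu_k\min\{\theta(2^{k+1}),g(2^{k+1})\}\gg\min\{\sum_{n\in D_{k+1}}\theta(n),\eta\}$
is an equally valid route and suffices by itself. However, your earlier claim that otherwise ``$\psi_1$ agrees with $\theta$ up to boundary blocks'' is false and should be dropped. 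For instance, take $\theta\equiv g(2^k)$ on each block $[2^k,2^{k+1})$: no block satisfies $\theta(2^{k+1})>g(2^k)$, yet $\psi_1=g\neq\theta$ on the interior of every block.

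Reducing the positive-density case to $\N$ via $\cM_{\A}=\cM$ is cleaner than the paper's ``works in the same way''. With only positive lower density, that argument would need windows $[cn,n]$ with $c$ below the density rather than $[n/2,n]$. You should make explicit that step (iii) also needs this reduction in that case, since $\mu_k\asymp\mu_{k+1}$ is unavailable there.

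One small slip in step (i): on $[2^k,2^{k+1})$ one has $2^{-2(k+1)}<n^{-2}$, so your floor only gives $\psi(n)\ge n^{-2}/4$. Use the floor $2^{-2k}$ instead; this is what your bound $\le 4n^{-2}$ in the Borel--Cantelli step actually corresponds to.
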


\begin{proof}\item
\begin{itemize}
\item[(i)] Let $\psi(n) := \theta(n) + \frac{1}{n^2}$. Then $\psi(n) \geq \frac{1}{n^2}$ and
$\sum_{n \in \mathbb{N}}\left(\psi(n) - \theta(n)\right) < \infty$. Thus by the convergence Borel--Cantelli Lemma, we have for every $\alpha \in \mathbb{R}$ that $\lambda(T_{\alpha}(\psi) \setminus T_{\alpha}(\theta)) = 0$. Consequently, since by $\psi \leq \theta$, $\mathcal{K}_{\A}(\psi) \subseteq \mathcal{K}_{\A}(\theta)$, we deduce $\mathcal{K}_{\A}(\psi) = \mathcal{K}_{\A}(\theta)$.
\item[(ii)] 
We claim that for any $\theta \in \mathcal{M}_{\A}$, we have 
$\sum_{n \in \A}\min\{\theta(n),\eta \frac{f_{\p}(n)}{n}\} = \infty$, which is
a generalization of the well-known fact that for $\theta \in \mathcal{M}$, we have 
$\sum_{n \geq 1}\min\{\theta(n),\frac{\eta}{n}\} = \infty$, but we provide a short proof for completeness.
First, we see straightforwardly that $\psi(n) := \min\{\theta(n),\eta \frac{f_{\p}(n)}{n}\}$ is a monotonically decreasing function, since $\eta\frac{f_{\p}(n)}{n}$ is monotonically decreasing, and it remains to show 
$\sum_{n \in \mathbb{N}} \psi(n) = \infty$.
We let \[B := \left\{n \in \A: \theta(n) \leq \eta \frac{f_{\p}(n)}{n} \right\},\quad \text{ and } C := \left\{n \in \A: \theta(n) > \eta \frac{f_{\p}(n)}{n}\right\}.\] If the set $C$ is finite, then obviously $\sum_{n \geq 1}\psi(n) - \theta(n) = O(1)$, and the result follows. Thus we may assume that $C$ is infinite, and therefore we get a sequence of $(n_k)_k$
with $n_k > 2n_{k-1}$ where $\theta(n_k) >\eta \frac{f_{\p}(n_k)}{n_k}$.
By monotonicity of $\psi$, we get for all $n_k/2 \leq n \leq n_k$ that
$\psi(n) \geq \eta \frac{f_{\p}(n_k)}{n_k}$, which implies by (II) that

\[\sum_{\substack{n_k/2 \leq n \leq n_k\\n \in \A}} \psi(k) \geq \eta \frac{f_{\p}(n_k)}{n_k} \cdot \#\{n \in [n_k/2,n_k]: n \in \mathcal{A}\}
\gg \eta \prod_{\substack{p \in \mathcal{P}\\p \leq n_k}} \left(1 + \frac{1}{p}\right)\prod_{\substack{p \in \mathcal{P}\\p \leq n_k}} \left(1 - \frac{1}{p}\right) \gg \eta.
\]

Since this holds for infinitely many $k$, we have $\sum_{n \in \A} \psi(n) = \infty$. The case where $\A$ has positive lower density works in the same way.
    \item[(iii)] 
    This argument follows the lines of the proof for the Cauchy condensation test, but we include it for completeness.
    For $2^k \leq n < 2^{k+1}$, we define $\psi(n) := \theta(2^{k+1})$, which gives
\[\sum_{n \in \A} \psi(n) = \sum_{k \geq 1}\theta(2^{k+1})\mu_k
\]
where $\mu_k := \#\A \cap [2^k,2^{k+1})$.
If $\A$ has positive lower density, then 
$S_k := \sum_{\ell \leq k}\mu_k \gg 2^{\ell}$
and hence by summation by parts,
\begin{equation*}\begin{split}\sum_{n \in \A} \psi(n) &\geq \sum_{\ell \geq 1} S_{\ell} \left(\theta(2^{\ell}) - \theta(2^{\ell+1})\right)
\geq \sum_{\ell \geq 1} \left(2^{\ell} \theta(2^{\ell}) - 2^{\ell-1}\theta(2^{\ell})\right) \\&\gg \sum_{\ell \geq 1} 2^{\ell}\theta(2^{\ell})
\geq \sum_{\ell \geq 1} \sum_{2^{\ell} < n \leq 2^{\ell+1}}\theta(n) = \infty,
\end{split}
\end{equation*}

    since $\theta \in \mathcal{M}_{\A}$. If $\A$ satisfies (II), then note that $\mu_k \asymp \mu_{k+1}$, and thus
    \[\sum_{n \in \A} \psi(n) = \sum_{k \geq 1}\theta(2^{k+1})\mu_k \gg \sum_{k \geq 1}\theta(2^{k+1})\mu_{k+1}
    \geq \sum_{\ell \geq 1} \sum_{2^{\ell} \leq n < 2^{\ell+1}}\theta(n) - O(1) = \infty.
    \]
    Since $\psi \leq \theta$, we clearly obtain $\mathcal{K}_{\A}(\psi)\subseteq \mathcal{K}_{\A}(\theta)$, which concludes the statement.
    \end{itemize}
\end{proof}

\subsection{Notations and standard results}

In view of Proposition \ref{wlog_psi}, we may (and will) assume for the rest of this article that $\psi \in \mathcal{M}_{\A}$ satisfies \eqref{wlog_i}--\eqref{wlog_iii}. For shorter notation, we will define some notations that will be fixed for the remainder of the article:

\begin{itemize}
 \item $D_k := \{2^k \leq n < 2^{k+1}: n \in \A\}$.
 \item $d_k := \prod_{\substack{p \in \p\\p \leq 2^k}}\left(1 + \frac{1}{p}\right)$.
    \item $\mu_k = \#D_k$.
    \item $\psi_k = 2\psi(2^k)$.
    \item $A_n := [n\alpha - \psi(n),n\alpha + \psi(n)] \pmod 1$.
\end{itemize}

We further make use of the standard $O$-and $o$-notations as well as Vinogradov notations $\ll,\gg$, meaning
$f \ll g \Leftrightarrow f = O(g)$, with any dependence of the implied constants denoted by a subscript. If $f \ll g$ and $g \ll f$, we write $f \asymp g$, and $f \sim g$ for $\lim_{x \to \infty} \frac{f(x)}{g(x)} = 1$.
A set of integers $\A \subseteq \mathbb{N}$ is said to have positive lower density if 
$\liminf_{N \to \infty} \frac{\#\{n \leq N: n \in \mathcal{A}\}}{N} > 0$.
We define the complex exponential $e(x) := \exp(2 \pi i x)$ and for a prime $p \in \mathbb{P}$, we write $p^k \lVert m$ if $p^k \mid m$, but $p^{k+1} \nmid m$.
We write as usual $\tau(\cdot)$ for the number of divisors, $\omega(\cdot)$ for the number of prime factors without multiplicity, and $\Omega(\cdot)$ for the number of prime factors with multiplicity. 
\\

For a real $\alpha$, we write $\alpha = [a_0;a_1,a_2,\ldots]$ for its continued fraction expansion, and denote by $\frac{p_n}{q_n} := \frac{p_n(\alpha)}{q_n(\alpha)} := [a_0;a_1,a_2,\ldots,a_n]$ with $p_n,q_n$ coprime the corresponding convergents.
We define by $\{,\}$ and $\lVert.\rVert$ the signed resp. unsigned distance to the nearest integer, and recall the following relations that can be found in any standard literature on continued fractions.

\begin{itemize}
    \item $q_n \lVert q_n\alpha\rVert \asymp \frac{1}{a_{n+1}}$.
    \item $q_n,q_{n-1}$ are coprime for all $n \geq 1$.
\end{itemize}

\subsection{Reduction to showing QIA}\label{QIA_section}

As discussed in Section \ref{sec_heur}, the purpose of this Section is to provide a tool that allows us to go from $r_{\infty}(\A) < \infty$ to $\lambda(\limsup_{n \to \infty} A_n) = 1$ - see Lemma \ref{bdv_applied} below. We make use of a result of Beresnevich--Dickinson--Velani \cite{BDV} in the spirit of Allen--Ram\'irez \cite[Proposition 1]{allen_ram} that can be generalized as follows:
\begin{lemma}\label{ramirez_lem}
    Let $(E_i)_{i\geq 1}$ be a sequence of Lebesgue-measurable subsets of $[0,1)$.  Suppose that there exist positive real numbers $C,K,c > 0$ and a sequence $(\cS_r)_{r \in \N}$ of finite subsets of~$\Zz$ such
  that the following conditions hold:
  \begin{gather}
    \label{eqn00}
    \lim_{r\to+\infty}\min\cS_r=+\infty,
    \\
    \label{eqn04}
    \sum_{i\in\cS_r}\lambda(E_i) \ge c,
    \\
    \label{eqn05}
    \text{ for all sufficiently large $r\geq 1$, we have}
    \\
      \sum_{\substack{s<t\\[0.5ex] s,t\in\cS_r}}
    \lambda\Bigl(E_s\cap E_t \Bigr) \leq  C
    \Bigl(\sum_{i\in\cS_k}\lambda(E_i)\Bigr)^2\notag,
    \\
    \label{vb89}
    \text{for any $\delta>0$ and any  interval $I = [a,b] \subset
      [0,1)$, there exists $r_0 = r_0(I)$ such that }
    \\
    \sum_{s \in \cS_r}\lambda\Bigl(I \cap E_s\Bigr)\geq
    \frac{1}{K}\sum_{s \in \cS_r}\lambda\left(I\right)\lambda(E_s)  
    \quad
    \text{ for all $r \geq r_0$}.\notag
  \end{gather}
  
  Then we have
  \[
    \lambda(\limsup_{i \to \infty} E_i)=1.
  \]
\end{lemma}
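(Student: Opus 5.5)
We will prove Lemma \ref{ramirez_lem} by combining the Chung--Erd\H{o}s inequality with a Lebesgue density argument, in the spirit of Beresnevich--Dickinson--Velani. Write $E := \limsup_{i\to\infty} E_i$ and assume for contradiction that $\lambda(E) < 1$. Then the complement $F := [0,1)\setminus E$ has positive measure. By the Lebesgue density theorem, we can pick a density point $x_0$ of $F$. Consequently, for any $\varepsilon>0$ there is an interval $I=[a,b]$ around $x_0$ with $\lambda(I\cap E) \le \varepsilon \lambda(I)$. The goal is to show instead that $\lambda(I\cap E)\ge c'\lambda(I)$ for every interval $I$, with a constant $c'>0$ independent of $I$. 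Choosing $\varepsilon<c'$ then gives the contradiction, hence $\lambda(E)=1$.

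To obtain the lower bound on a fixed interval $I$, consider the localized sets $E_s\cap I$ for $s\in\cS_r$ with $r\ge r_0(I)$. Since $\min \cS_r\to\infty$ by \eqref{eqn00}, we have $\limsup_{r}\bigcup_{s\in\cS_r}(E_s\cap I)\subseteq E\cap I$, so $\lambda(E\cap I)\ge \limsup_{r\to\infty}\lambda\bigl(\bigcup_{s\in\cS_r}(E_s\cap I)\bigr)$ by the reverse Fatou lemma. Note that the statement is taken with the sets $\cS_r$ finite, so Chung--Erd\H{o}s can be applied to each finite union directly:
\[
\lambda\Bigl(\bigcup_{s\in\cS_r}(E_s\cap I)\Bigr)\ge \frac{\bigl(\sum_{s\in\cS_r}\lambda(E_s\cap I)\bigr)^2}{\sum_{s,t\in\cS_r}\lambda(E_s\cap E_t\cap I)}.
\]
For the numerator we use \eqref{vb89}, which gives $\sum_s\lambda(E_s\cap I)\ge K^{-1}\lambda(I)\Sigma_r$, where $\Sigma_r:=\sum_{s\in\cS_r}\lambda(E_s)$.

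The main obstacle is the denominator. Hypothesis \eqref{eqn05} controls only the global overlaps $\lambda(E_s\cap E_t)$, and we need a bound of the form $\ll \lambda(I)\Sigma_r^2+\lambda(I)\Sigma_r$. The trivial bound $\lambda(E_s\cap E_t\cap I)\le\lambda(E_s\cap E_t)$ loses the factor $\lambda(I)$, giving only $C\Sigma_r^2+\Sigma_r$. We do not see how to recover the missing factor from the stated hypotheses alone. We would first try to handle it by replacing the density argument with the formulation in \cite[Lemma 6]{BDV} / \cite[Proposition 1]{allen_ram}. There one shows $\lambda(E\cap I)\ge \lambda(I)/C''$ using the local overlap estimate, which in applications comes from the same computation as \eqref{eqn05} restricted to $I$.

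If only the global bound is available, we argue differently. Chung--Erd\H{o}s with \eqref{eqn04} and \eqref{eqn05} already gives the global bound $\lambda(E)\ge \limsup_r \Sigma_r^2/(2C\Sigma_r^2+\Sigma_r)\ge c/(2Cc+1)>0$. We then bootstrap this to the localized statement. The step we would try is to pass to the normalized measures $\lambda(\cdot\cap I)/\lambda(I)$ and to invoke \eqref{vb89} twice, bounding both sums by multiples of $\lambda(I)$. Once a bound of the shape $\lambda(I\cap E)\ge \kappa\lambda(I)$ with $\kappa=\kappa(c,C,K)>0$ is secured, the density-point contradiction above completes the proof.
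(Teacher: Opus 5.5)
\textbf{There is a genuine gap: you stall on a linear bound that the argument does not need.}

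Everything up to the point where you stop is sound. Localize to an interval $I$, apply Chung--Erd\H{o}s to the finite unions, and use \eqref{vb89} for the numerator. Bound the full double sum by $\sum_{s,t\in\mathcal{S}_r}\lambda(E_s\cap E_t)\le 2C\Sigma_r^2+\Sigma_r\le (2C+c^{-1})\Sigma_r^2$ (via \eqref{eqn05} and \eqref{eqn04}). Then reverse Fatou with \eqref{eqn00} already gives
\[
\lambda(E\cap I)\ \ge\ \frac{\lambda(I)^2}{K^2(2C+c^{-1})}
\]
for every interval $I$. The gap is your belief that a linear bound $\lambda(E\cap I)\ge\kappa\lambda(I)$ is required, and neither fallback supplies one. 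A local overlap estimate is an extra hypothesis not contained in the lemma. The bootstrap cannot work, since \eqref{vb89} only bounds $\sum_s\lambda(E_s\cap I)$ from below and says nothing about $\sum_{s,t}\lambda(E_s\cap E_t\cap I)$. The need for linearity comes only from your choice of test sets: on small intervals around a density point of $F=[0,1)\setminus E$, the inequalities $\lambda(I)^2/C''\le\lambda(I\cap E)\le\varepsilon\lambda(I)$ are perfectly consistent as $\lambda(I)\to 0$. Your description of BDV's lemma is also inaccurate. Proposition \ref{BDVdensitylemma} only asks for $\lambda(E\cap U)\ge f(\lambda(U))$ for all open $U$, with some increasing $f$ tending to $0$, and $f(x)=x^2/C''$ qualifies. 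This is exactly how the paper concludes, after the same Chung--Erd\H{o}s computation with the trivial bound on the denominator.

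\textbf{The missing idea is to test on large open sets rather than shrinking intervals.} Suppose $\lambda(F)>0$. Outer regularity gives an open $U\supseteq F$ with $\lambda(U\cap E)=\lambda(U\setminus F)<\eta$, while $\lambda(U)\ge\lambda(F)$ stays bounded below.
\begin{enumerate}
\item Write $U$ as a countable disjoint union of open intervals, and keep finitely many, $I_1,\dots,I_M$, of total measure at least $\lambda(U)/2$. Shrink them to closed subintervals if needed, at arbitrarily small cost.
\item Apply \eqref{vb89} to each $I_m$ for $r\ge\max_m r_0(I_m)$ and sum. For $U'=\bigcup_m I_m$ this gives $\sum_s\lambda(E_s\cap U')\ge K^{-1}\lambda(U')\Sigma_r$.
\item Your Chung--Erd\H{o}s and reverse Fatou computation, run on $U'$ instead of $I$, then gives $\lambda(E\cap U)\ge\lambda(E\cap U')\ge \lambda(F)^2/\bigl(4K^2(2C+c^{-1})\bigr)$.
\end{enumerate}
This contradicts $\lambda(U\cap E)<\eta$ once $\eta$ is small enough, so $\lambda(E)=1$. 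No local overlap control is ever needed.
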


Lemma \ref{ramirez_lem} might be of independent interest. We remark the similarity to the results in \cite{BC} where similar conditions were examined. Interestingly, in contrast to \eqref{vb89}, the conditions in \cite{BC} state rather the opposite, by assuming $\sum_{s \in \cS_r}\lambda\Bigl(I \cap E_s\Bigr)\leq
    (1 + \varepsilon)\sum_{s \in \cS_r}\lambda\left(I\right)\lambda(E_s)$ instead. We remark that the results in \cite{BC} need $(1 + \varepsilon)$ instead of an arbitrary $K$, but apply to more general measures $\mu$, while Lemma \ref{ramirez_lem} makes use of some generalized form of Lebesgue density (which is inside the proof of Proposition \ref{BDVdensitylemma} stated below), and is thus restricted to a certain class of measures that includes the Lebesgue measure:

\begin{prop}[Beresnevich--Dickinson--Velani,~{\cite[Lemma 6]{BDV}}]\label{BDVdensitylemma}
  Let $(X,d)$ be a metric space with a finite measure $\mu$ such that
  every open set is $\mu$-measurable. Let $A$ be a Borel subset of $X$
  and let $f:(0,\infty) \to (0,\infty)$ be an increasing function with $f(x)\to 0$ as
  $x\to 0$. If for every open set $U\subseteq X$ we have
  \begin{equation*}
    \mu(A\cap U) \geq f(\mu(U)),
  \end{equation*}
  then $\mu(A) = \mu(X)$.
\end{prop}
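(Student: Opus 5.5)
We argue by contradiction, using only the outer regularity of finite Borel measures on metric spaces together with the monotonicity and positivity of $f$. Write $B := X \setminus A$, which is again a Borel set, and suppose that $\mu(A) < \mu(X)$, i.e. $\mu(B) > 0$. Since every open set is $\mu$-measurable, $\mu$ restricts to a finite Borel measure on $X$.

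The key input is the standard fact that a finite Borel measure on a metric space is outer regular: for every Borel set $E$ and every $\varepsilon>0$ there is an open $U \supseteq E$ with $\mu(U\setminus E)<\varepsilon$. To justify this, we consider the class of Borel sets $E$ such that for every $\varepsilon > 0$ there exist a closed $F\subseteq E$ and an open $U\supseteq E$ with $\mu(U\setminus F)<\varepsilon$.
\begin{itemize}
\item This class is a $\sigma$-algebra. Closure under complements is immediate. For countable unions, one uses finiteness of $\mu$ to truncate the union of the closed sets to a finite one, which is again closed.
\item It contains every closed set $F$. Indeed, $F=\bigcap_n \{x: d(x,F)<1/n\}$ is a decreasing intersection of open sets, so continuity from above (using $\mu(X)<\infty$) gives open neighbourhoods of $F$ of measure arbitrarily close to $\mu(F)$.
\end{itemize}
Hence the class contains all Borel sets. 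This is the only step requiring care, and it is where the metric structure and the finiteness of $\mu$ enter.

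Now apply outer regularity to $B$. For $\varepsilon>0$ pick an open $U\supseteq B$ with $\mu(U\setminus B)<\varepsilon$. Then $A\cap U = U\setminus B$, so $\mu(A\cap U)<\varepsilon$. On the other hand, $\mu(U)\ge \mu(B)>0$, and since $f$ is increasing, the hypothesis gives
\[
\mu(A\cap U)\ \ge\ f(\mu(U))\ \ge\ f(\mu(B)) > 0,
\]
where positivity comes from $f$ taking values in $(0,\infty)$. Choosing $\varepsilon < f(\mu(B))$ yields a contradiction. Hence $\mu(B)=0$, i.e. $\mu(A)=\mu(X)$.

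The assumption $f(x)\to 0$ as $x\to0$ is not needed in this argument; it only guarantees that the hypothesis is non-vacuous for small $U$, which is the form in which the result will be applied (via Lemma \ref{ramirez_lem}). The main obstacle, such as it is, is the regularity step above.
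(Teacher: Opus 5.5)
Your proof is correct and complete. The paper gives no proof of this proposition; it cites \cite{BDV} and describes the statement only loosely as a ``generalized form of Lebesgue density''. The useful comparison is therefore with that description. Your argument shows that no density or differentiation theorem is involved, and none would be available for an arbitrary finite measure on an arbitrary metric space. Only outer regularity of finite Borel measures on metric spaces is needed, and you establish it correctly via the usual $\sigma$-algebra argument: closed $F$ are approximated from outside by $\{d(\cdot,F)<1/n\}$, and countable unions are handled by truncating to finitely many closed sets using $\mu(X)<\infty$.

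Your proof also shows how little of the hypothesis is used. It is applied only to a single open $U\supseteq X\setminus A$ with $\mu(U)\ge\mu(X\setminus A)>0$, so $f$ is never evaluated near $0$. Your remark that $f(x)\to0$ plays no logical role is therefore right.

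For the application inside Lemma \ref{ramirez_lem}, the relevant function is $f(x)=x^2/C'$; the paper's displayed $C'/\lambda(U)^2$ is a slip. Since $x^2/C'$ is positive and increasing, your argument covers that case.
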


\begin{proof}[Proof of Lemma \ref{ramirez_lem}]
This follows the strategy of Allen-Ram\'irez \cite{allen_ram}, but in the more generalized setup provided in Lemma \ref{ramirez_lem}. We fix an open set $U \subseteq [0,1)$ and aim to 
    prove 
     \begin{equation}\label{pos_ever}
    \lambda(\limsup_{i \to \infty}E_i\cap U) \geq \frac{C'}{\lambda(U)^2},
  \end{equation}
  where $C'$ is a later specified constant that is independent of $U$.  
   We will then apply Proposition \ref{BDVdensitylemma} with $f(x) := \frac{C'}{x^2}$ to conclude the proof.\\

   Note that
\[\limsup_{i \to \infty} E_i = \bigcap_{i \geq 1}\bigcup_{j \geq i}E_i,\]
thus by continuity of measures, and since $\lambda_{[0,1)}$ is a finite measure, for all $\delta > 0$, there exists $I_0 = I_0(\delta)$ such that
\[\lambda\left(\bigcup_{i \geq I_0}E_i\right) \geq \lambda(\limsup_{i \to \infty} E_i) - \delta.\]
Since $\bigcup_{i \geq I_0}E_i \supseteq \bigcup_{i \in \cS_r}E_i$ for all $r$ sufficiently large, we obtain
\begin{equation}\label{cont_meas}\begin{split}\lambda(\limsup_{i \to \infty} E_i \cap U)
&\geq \lambda\big(\bigcup_{i \geq I_0}E_i \cap U\big) - \delta
\\&\geq \lambda\big(\bigcup_{i \in \cS_r}E_i \cap U\big) - \delta.
\end{split}\end{equation}
We now apply the Chung--Erd\H{o}s inequality, proving for any $r \in \mathbb{N}$ that
  \[\lambda\left(\bigcup_{i \in \cS_r} E_i \cap U \right)
\geq \frac{
\left(\sum_{i \in \cS_r} \lambda(E_i \cap U)\right)^2
}{\sum_{i,j \in \cS_r} \lambda(E_i \cap E_j \cap U)}
\geq \frac{
\left(\sum_{i \in \cS_r} \lambda(E_i \cap U)\right)^2
}{\sum_{i,j \in \cS_r} \lambda(E_i \cap E_j)}.
\]
We then claim that by \eqref{vb89} we have for $r$ sufficiently large that there exists $K' > 0$ such that

\begin{equation}\label{local_dens}\left(\sum_{i \in \cS_r} \lambda(E_i \cap U)\right)^2 \geq \frac{\lambda(U)^2}{K'}\left(\sum_{i \in \cS_r} \lambda(E_i)\right)^2.\end{equation}
Indeed, since every open set $U \subseteq [0,1)$ is a disjoint union of at most countably many open intervals, a collection of finitely many such intervals provides a subset of $U$ with the sum of its measures being at least $\lambda(U)/2$. By changing the constant $K'$ in \eqref{local_dens} by a factor of $4$, this reduces \eqref{local_dens} to the case where $U$ is an interval, which now follows from \eqref{vb89}.\\ 

On the other hand, by \eqref{eqn05}, we have for sufficiently large $r$

\[\frac{
\left(\sum_{i \in \cS_r} \lambda(E_i)\right)^2
}{\sum_{i,j \in \cS_r} \lambda(E_i \cap E_j)} \geq \frac{1}{C}.\]
Thus, combining the above, we obtain

\[\lambda\left(\bigcup_{i \in \cS_r} E_i \cap U \right)
\geq \frac{\lambda(U)^2}{CK'}.
\]
Finally, we set $\delta = \frac{\lambda(U)^2}{2CK'}$, showing by \eqref{cont_meas} that for all $r$ sufficiently large, 
\[\begin{split}\lambda(\limsup_{i \to \infty} E_i \cap U)& \geq \lambda\big(\bigcup_{i \in \cS_r}E_i \cap U\big) - \frac{\lambda(U)^2}{CK'}  \geq \frac{\lambda(U)^2}{CK'}  - \frac{\lambda(U)^2}{2CK'} = \frac{\lambda(U)^2}{2CK'}.
\end{split}
\]
This proves \eqref{pos_ever} with $C' = 2CK'$, which finishes the proof.
\end{proof}

The following statement finally applies Lemma \ref{ramirez_lem} to bring it into the setup we need for the main theorems in this article.

 \begin{lemma}[Application of the above]\label{bdv_applied}
    Assume that $0 \leq \mu_k\psi_k \leq 1$ for all $k$, and let 
    \[\sum_{k \geq 1}\mu_k\psi_k = \infty.\]
    Further assume $\psi(k) \to 0$ as $k \to \infty$, and assume that there exists $C > 0$ such that for any 
    $[X,Y] \subseteq \mathbb{N}$ with $X$ sufficiently large and 
    \[\sum_{X \leq k \leq Y}\mu_k\psi_k \in [1,2],\]
    we have
    \begin{equation}\label{qia}
        \sum_{X \leq k,\ell \leq Y}\sum_{i \in D_k}\sum_{j \in D_{\ell}}\lambda(A_i \cap A_j) \leq C.
    \end{equation}
    Additionally, assume $(n\alpha)_{n \in D_k}$ equidistributes as $k \to \infty$, i.e., for all intervals $I \subseteq [0,1)$, we have
    \begin{equation}\label{equi}
        \frac{\#\{n \in D_k: \{n\alpha\} \in I\}}{\mu_k} \to \lambda(I), \quad k \to \infty.
    \end{equation}
    Then \[\lambda(\limsup_{i \to \infty}A_i) = 1.\]
\end{lemma}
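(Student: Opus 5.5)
We reduce Lemma \ref{bdv_applied} to Lemma \ref{ramirez_lem}, applied with $E_i := A_i$ for $i \in \A$ and with index sets $\cS_r$ built from blocks of consecutive dyadic ranges.

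\emph{Construction of the blocks.} Since $\sum_k \mu_k\psi_k = \infty$ and each term satisfies $0 \le \mu_k\psi_k \le 1$, we can choose integers $X_1 < Y_1 < X_2 < Y_2 < \dots$ greedily. Start at $X_r := Y_{r-1}+1$ and let $Y_r$ be the first index at which the partial sum $\sum_{X_r \le k \le Y_r}\mu_k\psi_k$ reaches $1$. Because each term is at most $1$, this sum then lies in $[1,2]$. We set
\[
\cS_r := \bigcup_{X_r \le k \le Y_r} D_k .
\]
Then $\min \cS_r \ge 2^{X_r} \to \infty$, which gives \eqref{eqn00}. Since $\lambda(A_i) = 2\psi(i) = \psi_k$ for $i \in D_k$, wherever $\psi_k \le 1$ (true for large $k$ because $\psi \to 0$), we have $\sum_{i\in\cS_r}\lambda(A_i) = \sum_{X_r\le k\le Y_r}\mu_k\psi_k \in [1,2]$. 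This gives \eqref{eqn04} with $c=1$. Condition \eqref{eqn05} follows directly from \eqref{qia}: the left side of \eqref{eqn05} is at most $C \le C(\sum_{i\in\cS_r}\lambda(A_i))^2$, because that square is at least $1$.

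\emph{Verifying the local condition \eqref{vb89}.} This is the main point. Fix an interval $I=[a,b]$ with $\lambda(I) > 0$. For $n \in D_k$, the set $A_n$ is an interval of length $\psi_k$ centred at $\{n\alpha\}$. Whenever $\{n\alpha\}$ lies in the shrunken interval $I' := [a+\psi_k, b-\psi_k]$, we get $A_n \subseteq I$, so $\lambda(I\cap A_n) = \psi_k$. Hence
\[
\sum_{n\in D_k}\lambda(I\cap A_n) \ge \psi_k\,\#\{n\in D_k: \{n\alpha\}\in I'\}.
\]
Because $\psi_k\to 0$, for large $k$ we have $I' \supseteq [a+\lambda(I)/4,\, b-\lambda(I)/4]$, an interval of length $\lambda(I)/2$. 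The equidistribution assumption \eqref{equi} then gives, for $k \ge k_0(I)$, that this count is at least $\mu_k\lambda(I)/4$. Summing over $k\in[X_r,Y_r]$ with $X_r \ge k_0(I)$, that is, for $r\ge r_0(I)$, yields \eqref{vb89} with $K=4$. The subtlety is that \eqref{equi} is only a limit statement for each fixed interval. This is handled because $I$, and hence the fixed interval of length $\lambda(I)/2$, is fixed before $r_0$ is chosen. Uniformity in $I$ is not required, since Lemma \ref{ramirez_lem} lets $r_0$ depend on $I$.

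\emph{Conclusion and remaining care.} Lemma \ref{ramirez_lem} now gives $\lambda(\limsup A_i)=1$. One detail remains: the index sets $\cS_r$ must be finite subsets of $\mathbb{Z}$, and $E_i$ is defined only for $i\in\A$. This is harmless, since we can set $E_i=\emptyset$ for $i\notin\A$. Intervals wrapping around modulo $1$ cause no trouble, because $I'\subseteq I\subset[0,1)$ lies well inside the torus once $\psi_k$ is small.
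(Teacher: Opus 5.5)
Your proposal is correct and follows the paper's proof essentially step for step. It uses the same disjoint blocks $[X_r,Y_r]$ with $\sum_{X_r\le k\le Y_r}\mu_k\psi_k\in[1,2]$, the same sets $\cS_r=\bigcup_{X_r\le k\le Y_r}D_k$, derives \eqref{eqn05} from \eqref{qia}, and obtains \eqref{vb89} with $K=4$ via the shrunken interval and \eqref{equi}. Applying \eqref{equi} to the fixed interval $[a+\lambda(I)/4,\,b-\lambda(I)/4]$, rather than to the $k$-dependent interval $[a+\psi_k,\,b-\psi_k]$ as the paper does, is a small but welcome tightening, since \eqref{equi} is only assumed for each fixed interval.
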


\begin{proof}
    We check that all conditions of Lemma \ref{ramirez_lem} are satisfied.
    We choose blocks $[X_r,Y_r], r \in \mathbb{N}$ pairwise disjoint where 
     \[\sum_{X_r \leq k \leq Y_r}\mu_k\psi_k \in [1,2].\]
     This can be done since $\sum_{k \geq 1}\mu_k\psi_k = \infty$ and $0 \leq \mu_k\psi_k \leq 1$. We now set $\cS_r = \bigcup_{X_r \leq k \leq Y_r}D_k$, which immediately satisfies \eqref{eqn00} and \eqref{eqn04}. By \eqref{qia}, we obtain immediately \eqref{eqn05}, and we are left to show \eqref{vb89}.
     Since $\psi_k \to 0$, defining for $I = [a,b]$ $\tilde{I} = [a + \psi_k, b - \psi_k]$, we may assume that 
     $\lambda(\tilde{I}) \geq \lambda(I)/2$.
     Further, we get
     $\sum_{i \in D_k}\lambda(I \cap A_i)
     \geq \psi_k \#\{i \in D_k: \{i\alpha\} \in \tilde{I}\}
     $.
     By \eqref{equi}, this implies for $r$ sufficiently large that
     \[\sum_{X_r \leq k \leq Y_r}\sum_{i \in D_k}\lambda(I \cap A_i) \geq \frac{\lambda(I)}{4}\sum_{X_r \leq k \leq Y_r}\mu_k\psi_k,\]
     proving \eqref{vb89}. An application of Lemma \ref{ramirez_lem} now proves the claim.
\end{proof}

\section{Proof of Theorem \ref{thm_prime_like}}

We first prove that $BAD \subseteq \bigcap_{\psi \in \mathcal{M}_{\A}}K_{\A}(\psi)$. This is the main part of the proof --- the part where $\bigcap_{\psi \in \mathcal{M}_{\A}}K_{\A}(\psi) \subseteq BAD_{\p}$ will be shown in Subsection \ref{counterex_sec}. We now fix $\alpha \in BAD$ and $\psi \in \mathcal{M}_{\A}$. By Proposition \ref{wlog_psi} we may (and will) assume that $\psi$ satisfies the conditions \eqref{wlog_i} -- \eqref{wlog_iii}.
By (II), we see that $\mu_k \gg \frac{2^k}{k} \gg 2^{k(1 - \varepsilon/2)}$ since $\prod_{\substack{p \in \p\\p \leq x}}\left(1 - \frac{1}{p}\right) \gg \frac{1}{\log x}$. Thus by (III), we have for all intervals $I$ that
\begin{equation*}%\label{equi_}
        \frac{\#\{n \in D_k: \{n\alpha\} \in I\}}{\mu_k} \to \lambda(I), \quad k \to \infty.
    \end{equation*}
    Additionally, assumption \eqref{wlog_ii(1)} and (II) show that $\mu_k\psi_k \leq 1$, since
    \[\mu_k\psi_k \ll \eta \prod_{\substack{p \in \mathcal{P}\\p \leq n_k}} \left(1 + \frac{1}{p}\right)\prod_{\substack{p \in \mathcal{P}\\p \leq n_k}} \left(1 - \frac{1}{p}\right),\] and choosing $\eta$ sufficiently small proves the claim.
    Hence all conditions of Lemma \ref{bdv_applied} are satisfied, except potentially \eqref{qia}. 
    Thus, it remains to show that
    \begin{equation*}%\label{qia_}
        \sum_{X \leq k,\ell \leq Y}\sum_{i \in D_k}\sum_{j \in D_{\ell}}\lambda(A_i \cap A_j) \leq C,
    \end{equation*}
    whenever $X,Y$ are such that $\sum_{X \leq k \leq Y}\mu_k\psi_k \in [1,2]$, which will be the remainder of the proof.

\subsection{Step 1: A sieve estimate for the second-order correlation}\label{sec_sieve}
We present here an application of a standard upper-bound sieve. This generalizes the upper-bound sieve of the classical Twin prime setup, that is (see e.g. \cite[Theorem 6.7]{ant}),
\[\#\{p\leq x,\mid\, p+h \in \mathbb{P}\} | \ll
      \frac{h}{\varphi(h)}\frac{x}{(\log x)^2},\]
       to a more general situation where we only sift out the primes in $\mathcal{P}$.
\begin{prop}\label{sieve_prop}
    Let $\A ,\p, \rho$ satisfy (I) of Theorem \ref{thm_prime_like}.
    Then we have for all $h \geq 1$
\[
      \#\{n \in D_k: n + h \in \A\} \ll_{\rho}
      \frac{h}{\varphi(h)}\frac{2^k}{d_k^2},
    \]
    where $\varphi(h)$ denotes the Euler totient function. The implied constant is uniform in $h$.
    \end{prop}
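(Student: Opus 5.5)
We bound the count by an upper-bound sieve. By (I), for $k$ large every $n\in D_k$ satisfies $p\nmid n$ for all $p\in\p$ with $p\le 2^{\rho k}$. Since $n+h\ge 2^k$, also $p\nmid n+h$ for all $p\in\p$ with $p\le 2^{\rho k}$. Set $z:=2^{\rho k/2}$, a smaller sifting level, which makes the sieve remainders harmless. Then
\[
\#\{n\in D_k: n+h\in\A\}\le \#\{2^k\le n<2^{k+1}: p\nmid n(n+h)\ \forall p\in\p,\ p\le z\}.
\]
This drops the condition $n\in\A$ and keeps only the sieve conditions, so it is a two-dimensional (at most) sieve problem on an interval of length $2^k$. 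For each $p\in\p$ with $p\le z$, the number of excluded residue classes mod $p$ is $\omega(p)=2$ if $p\nmid h$ and $\omega(p)=1$ if $p\mid h$. The sieve density is therefore $g(p)=\omega(p)/p$, and the remainder satisfies $|r_d|\le \omega(d)\le \tau(d)$ for squarefree $d$ composed of primes in $\p$.

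We apply a standard upper-bound sieve of dimension $\kappa\le 2$: either Selberg's sieve, or the fundamental lemma / Brun's sieve, e.g.\ \cite[Theorem 6.7]{ant} in generalized form. With level $D=2^{k/2}$ (so that $z\le D$, using $\rho$ small, say $\rho<1$), this gives
\[
\ll_{\rho} 2^k\prod_{\substack{p\in\p\\p\le z}}\Bigl(1-\frac{\omega(p)}{p}\Bigr)+\sum_{d\le D}\mu^2(d)\,3^{\omega(d)}\,\omega(d).
\]
The remainder sum is $\ll D(\log D)^{C}\ll 2^{k/2+o(k)}$. This is negligible against the main term, which is $\gg 2^k/k^{2}$ since $d_k\ll \log 2^k$. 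The small primes $p=2$ with $\omega(2)=2$, which would make the local factor vanish, only help the upper bound. For such degenerate primes we simply sieve by one class, or discard them, at the cost of a constant.

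Next we evaluate the product. We have $1-\omega(p)/p=(1-1/p)^2$ times $\bigl(1+O(1/p^2)\bigr)$ for $p\nmid h$. For $p\mid h$ we instead get $(1-1/p)=(1-1/p)^2(1-1/p)^{-1}$. Hence the product is
\[
\ll \prod_{\substack{p\in\p\\p\le z}}\Bigl(1-\frac1p\Bigr)^{2}\prod_{p\mid h}\Bigl(1-\frac1p\Bigr)^{-1}\le \frac{h}{\varphi(h)}\prod_{\substack{p\in\p\\p\le z}}\Bigl(1-\frac1p\Bigr)^{2}.
\]
Finally we compare with $d_k^{-2}$. We have $(1-1/p)\le(1+1/p)^{-1}$, and by Mertens' theorem
\[
\prod_{\substack{p\in\p\\ z<p\le 2^k}}\Bigl(1+\frac1p\Bigr)\le \exp\Bigl(\sum_{z<p\le 2^k}\frac1p\Bigr)\ll_\rho 1,
\]
since $\log(k/(\rho k/2))=O_\rho(1)$. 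Hence $\prod_{p\in\p,\,p\le z}(1-1/p)^2\ll_\rho d_k^{-2}$, which yields the claim uniformly in $h$. For the finitely many small $k$ where (I) does not yet apply, the bound is trivial after adjusting constants, since $2^k/d_k^2\gg 1$.

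The main obstacle is stating the upper-bound sieve cleanly for an arbitrary subset $\p\subseteq\pr$, uniformly in $h$. The sieve dimension is not fixed, since only primes in $\p$ are sifted and $p\mid h$ changes $\omega(p)$. The plan is to use Selberg's sieve, whose upper bound $X/G(\sqrt D)+\text{remainder}$ requires no dimension hypothesis. The lower bound $G(\sqrt D)\gg\prod(1-\omega(p)/p)^{-1}$ is then obtained via the usual Rankin-type comparison with the dimension bounded by $2$. Uniformity in $h$ holds because $h$ only enters through the local densities, and these are absorbed into $h/\varphi(h)$.
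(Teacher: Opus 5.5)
Your proposal is correct and follows essentially the same route as the paper. Both arguments drop the condition $n\in\A$ via (I) and sift $n(n+h)$ by the primes of $\p$ up to a small power of $2^k$, with local densities $\nu(p)\in\{1,2\}$ and remainders $O(\tau(d))$. Both then apply a dimension-$2$ upper-bound sieve at level $D=2^{k/2}$; the paper uses the Fundamental Lemma \cite[Theorem 18.11(b)]{kouk_book} after shrinking $\rho$, where you offer Selberg's sieve or the Fundamental Lemma. Finally, both compare $\prod(1-\nu(p)/p)$ with $d_k^{-2}\,h/\varphi(h)$ via Mertens.

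Your handling of $p=2$ and of small $k$ is fine; indeed, when $2\in\p$ the count vanishes unless $h$ is even. Your worry about a non-fixed dimension is unnecessary, because the Fundamental Lemma only needs the one-sided dimension bound, which holds with $\kappa=2$ for every $\p\subseteq\pr$, uniformly in $h$.
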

    \begin{proof}
    Let us write $\p_n := \prod_{\substack{p \in \p\\p \leq n}}p$.
        By (I), we have 
        \[ \#\{n \in D_k: n + h \in \A\} 
        \leq  \#\{n \in [2^{k},2^{k+1}): \gcd(n(n + h),\p_{2^{\rho \cdot k}}) = 1\}. 
        \]
        We may assume without loss of generality that $\rho < 1/100$, since otherwise, we can always reduce the value of $\rho$, which will only potentially increase the upper bound.
        Furthermore, we may assume that if $2 \in \p$, then $h$ is odd; otherwise, the count above is $0$, and the statement holds trivially.\\
        
        We are now in position to apply upper-bound sieves efficiently: Defining for $p \mid \p_{2^{\rho \cdot k}}$
        \[\nu(p) := \begin{cases} 1 &\text{ if } p \mid h,\\
        2 &\text{ if } p \nmid h,
        \end{cases}\]
        and extending $\nu$ multiplicatively, we obtain for $d \mid \p_{2^{\rho \cdot k}}$
        \[\#\{n \in [2^{k},2^{k+1}): d \mid n(n+h) \} = 2^{k}\cdot \frac{\nu(d)}{d} + O(\tau(d)).        
        \]
        Thus we obtain immediately
        \[\sum_{\substack{d \leq 2^{k/2}\\d \mid \p_{2^{\rho \cdot k}}}}\tau(d)^2 \ll 2^{\frac{2}{3}k}.\]
        This allows us to apply the Fundamental Lemma of Sieve theory in the form of \cite[Theorem 18.11(b)]{kouk_book} (with $\kappa = 2$, $D = 2^{k/2}, X = 2^k, y = 2^{k \cdot \rho}$), showing that
        \[\#\{n \in [2^{k},2^{k+1}): \gcd(n(n + h),\p_{2^{\rho \cdot k}}) = 1\} \ll 2^{k}\prod_{\substack{p \in \p_{2^{\rho \cdot k}}\\p > 2}}\left(1 - \frac{\nu(p)}{p}\right).
        \]
        Now observe that since $\prod_{x \leq p \leq x^{1/\rho}}\left(1 + \frac{1}{p}\right) \ll_{\rho} 1$, we have
        \[\prod_{\substack{p \in \p_{2^{\rho \cdot k}}\\p>2}}\left(1 - \frac{\nu(p)}{p}\right) \asymp \prod_{\substack{p \in \p_{2^{\rho \cdot k}}\\p>2}} \left(1 - \frac{2}{p}\right)\prod_{\substack{p \in \p_{2^{\rho \cdot k}}\\p \mid h}} \left(1 + \frac{1}{p}\right)
        \ll_{\rho} \prod_{\substack{p \in \p_{2^{k}}}}\left(1 - \frac{1}{p}\right)^2 \prod_{\substack{p \in \mathbb{P}\\p \mid h}}\left(1 + \frac{1}{p}\right) \ll \frac{1}{d_k^2}\frac{h}{\varphi(h)}.
        \]
        This finishes the proof.
    \end{proof}

\subsection{Step 2: Averaging multiplicative functions over Bohr sets}\label{sec_avg_mult}

Given an integer~$\ell$, a real number $t > 0$, and an irrational $\alpha$, we denote
\[
  \mathcal{N}_{\alpha}(\ell,t)
  := \Bigl\{1 \leq N \leq 2^{\ell}: \lVert N \alpha
  \rVert \leq t\Bigr\},
\]
which are examples of \emph{Bohr sets}. In this part of the proof, we aim to show that the
function $n/\varphi(n)$ is bounded on average over such
sets. Here we will follow the main ideas of \cite{HK} and \cite{bad_rough}. 
In these articles, we had $2^{\ell}t \to \infty$ (which is the size of the number of elements we average over) with a certain speed. While this allowed us to obtain even the correct average asymptotics for $n/\varphi(n)$ in \cite{bad_rough}, in this article we are allowed to lose constants, but we can not assume anything about $2^{\ell}t$.\\

To prove the result,
we will use the relation between Bohr sets and generalized arithmetic
progressions: For integers $x$, $y$, $z$, we denote
\[
  P(x,y,z) := \{ax + by\,\mid\, a,b \in \Zz\text{ and } \lvert
  a\rvert,\lvert b\rvert \leq z\},
\]
which is a (rank~$2$) finite generalized arithmetic progression. This structural result has been exploited already in earlier work of Chow~\cite{chow}, in a related setup. The following is a straightforward generalization of \cite[Lemma 7.6]{HK}, with the ideas originating in Tao's blog post \cite{tao_blog} and the work of Chow~\cite{chow}:

\begin{lemma}[Structure of the Bohr sets]\label{bohr_AP}
 Let $\alpha$ be a badly approximable number, $\ell \in \N, t> 0$ with $2^{\ell}t > 1$. Then there exist integers $(x,y,z)$, with $x$ and~$y$
  coprime and
  \[
    x \asymp y \asymp_{\alpha} \sqrt{2^{\ell}t^{-1}},\quad\quad z \ll_{\alpha} \sqrt{2^{\ell}t},
  \]
  such that
  \begin{equation}
    \label{bohr_inclusion}
    \mathcal{N}_{\alpha}(\ell,t) \subseteq P(x,y,z).
  \end{equation}
  In particular, it follows that
  \begin{equation}\label{cardinality_bohr}
    \#\mathcal{N}_{\alpha}(\ell,t) \ll_{\alpha} 2^{\ell}t.
  \end{equation}
\end{lemma}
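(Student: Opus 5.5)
The plan is to pick the generators $x,y$ as consecutive convergent denominators of $\alpha$, chosen at the scale $\sqrt{2^{\ell}t^{-1}}$, and then to write every element of the Bohr set as an integer combination of them.

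\textbf{Choice of generators.} Set $Q := \sqrt{2^{\ell}t^{-1}}$ and let $n$ be the largest index with $q_n \leq Q$. Put $x := q_{n+1}$ and $y := q_n$. These are coprime. Since $\alpha \in BAD$, the partial quotients are bounded, so $q_{n+1} \ll_{\alpha} q_n$, which gives $x \asymp y \asymp_{\alpha} Q$. We also have $\lVert q_j\alpha\rVert \asymp_{\alpha} 1/q_j$, hence $\lVert x\alpha\rVert \asymp \lVert y\alpha\rVert \asymp_\alpha 1/Q$. If $t$ is so large that $Q<1$, the statement is trivial: take $x=1,y=2$ and $z=2^{\ell}$, and note $2^{\ell}\le 2^{\ell}t$ when $t\ge 1$.

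\textbf{Decomposition of a Bohr element.} Take $N \in \mathcal{N}_{\alpha}(\ell,t)$. Because $\gcd(x,y)=1$ and $x,y\asymp Q$, we can write $N = ax + by$ with $a,b \in \Zz$ uniquely, after fixing $b$ modulo $x$. Use the determinant identity $q_{n+1}p_n - q_np_{n+1} = \pm 1$, or equivalently consider the linear map $(a,b) \mapsto (ax+by,\ a\{x\alpha\} + b\{y\alpha\})$. Its determinant is
\[
x\{y\alpha\} - y\{x\alpha\} = \pm\Bigl(q_{n+1}\{q_n\alpha\} - q_n\{q_{n+1}\alpha\}\Bigr) = \pm 1,
\]
using the classical identity $q_{n+1}\lVert q_n\alpha\rVert + q_n\lVert q_{n+1}\alpha\rVert = 1$ together with the alternating signs of $\{q_j\alpha\}$. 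The map is therefore a unimodular change of coordinates on $\Zz^2$.

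Next lift $N$ to the lattice point $(N, \{N\alpha\})$, more precisely $(N, N\alpha - M)$ with $|N\alpha - M| \leq t$. This point lies in the lattice $\Zz(x, x\alpha - p_{n+1}) + \Zz(y, y\alpha - p_n)$, which is the full lattice $\{(m, m\alpha - k)\}$ since the determinant is $\pm1$. Inverting the $2\times 2$ matrix by Cramer's rule gives
\[
|a| \ll |N|\,\lVert y\alpha\rVert + t\,y \ll 2^{\ell}/Q + tQ,
\]
and the same bound for $|b|$. Substituting $Q = \sqrt{2^{\ell}/t}$, both terms equal $\sqrt{2^{\ell}t}$. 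So we may take $z \ll_{\alpha} \sqrt{2^{\ell}t}$, which proves \eqref{bohr_inclusion}.

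\textbf{Cardinality bound.} The bound \eqref{cardinality_bohr} follows immediately, since $\#P(x,y,z) \leq (2z+1)^2 \ll 2^{\ell}t$, where we use $2^{\ell}t > 1$ to absorb the $+1$.

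\textbf{Main obstacle.} The step that needs care is checking the determinant identity and the sign conventions of the signed distances $\{q_j\alpha\}$. This is where the inversion of the linear map becomes clean. Badly approximable $\alpha$ is needed only to guarantee that $x$ and $y$ are comparable and that $\lVert q_j\alpha\rVert \asymp 1/q_j$.
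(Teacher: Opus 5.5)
Your proposal is correct and follows essentially the same route as the paper: both take two consecutive convergent denominators at scale $\sqrt{2^{\ell}t^{-1}}$ as generators and bound the coefficients by $2^{\ell}\lVert q\alpha\rVert + tq \ll_{\alpha} \sqrt{2^{\ell}t}$. The only differences are that the paper imports the inclusion from \cite{HK} with generators $q_r, q_r+q_{r-1}$ (a unimodular change of basis from your $q_{n+1}, q_n$), whereas you derive it directly from the unimodular lattice $\{(m, m\alpha - k)\}$ via Cramer's rule, which makes your argument self-contained.
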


\begin{proof}
Without loss of generality, we will assume $t = 2^{-i}$ for some $i \in \mathbb{N}$ (otherwise, for $2^{-(i+1)} \leq t < 2^{-i}$, we replace $t$ with $2^{-i}$).
Following \cite[Lemma 7.6]{HK}, we see that
  \[
    \mathcal{N}(\ell,2^{-i}) \subseteq P(q_{r},q_r + q_{r-1},z)
  \]
  for any $r\geq 1$, where (recall that $\frac{p_r}{q_r}$ denotes the convergents to $\alpha$)
  \[
    z= \max\Bigl\{ 2^{\ell} \lVert q_r \alpha \rVert + 2^{-i}q_r, 2^j
    \lVert (q_r + q_{r-1})\alpha \rVert + 2^{-i}(q_r + q_{r-1})
    \Bigr\}.
  \]
  Since $\alpha$ is badly approximable, we find $q_r$ at any scale, $q_{r-1} \asymp_{\alpha} q_r$ and $\lVert q_r\alpha\rVert \asymp_{\alpha} \lVert q_{r-1}\alpha\rVert \asymp \frac{1}{q_r}$.
  We optimize for $z$ by picking $r$ such that $q_{r-1} \asymp_{\alpha} 2^{\tfrac{\ell -i}{2}}$, which implies  $z \ll \sqrt{2^{\ell}2^{-i}}$.
  Since the denominators of
  consecutive convergents are always coprime, we also have
  $\gcd(q_{r},q_r + q_{r-1}) = 1$, which concludes the proof
  of \eqref{bohr_inclusion}, while \eqref{cardinality_bohr} follows immediately from $\#P(x,y,z) \ll z^2$.
\end{proof}

  \begin{lemma}[Averaging $n/\varphi(n)$ over Bohr sets]\label{bohr_average}
      Let $\alpha$ be a badly approximable number with 
      $c(\alpha) := \sup_{q \geq 1}q \lVert q\alpha\rVert > 0, \ell \in \N, t> 0$.
      Setting $Y = \sqrt{2^{\ell}t^{-1}}, Z = \sqrt{2^{\ell}t}$, we have for any $\xi > 0$,
\begin{equation}\label{cases_bohr}
    \sum_{N \in \mathcal{N}_{\alpha}(\ell,t)} \frac{N}{\varphi(N)} \ll_{\alpha,\xi} \begin{cases}
    0 &\text{ if } \sqrt{c(\alpha)} > Z,\\
     Z^2 \log {\ell} &\text{ if } \sqrt{c(\alpha)} < Z < (\log Y)^{\xi},\\
    Z^2 &\text{ if } (\log Y)^{\xi} \leq Z.
    \end{cases}
\end{equation}
  \end{lemma}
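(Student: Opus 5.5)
Three regimes, treated in the order of \eqref{cases_bohr}. Throughout, $N$ ranges over $\mathcal{N}_{\alpha}(\ell,t)$, so $1\le N\le 2^{\ell}$ and $\lVert N\alpha\rVert\le t$.

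\emph{First regime, $Z<\sqrt{c(\alpha)}$.} This means $2^{\ell}t<c(\alpha)$. For any such $N$ we would have $N\lVert N\alpha\rVert\le 2^{\ell}t<c(\alpha)$. Reading $c(\alpha)$ as the infimum $\inf_{q}q\lVert q\alpha\rVert$, which is the intended meaning for $\alpha\in BAD$, no such $N$ exists. So the Bohr set is empty and the sum is $0$.

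\emph{Second regime, $Z<(\log Y)^{\xi}$.} Here I only need a crude bound. Lemma \ref{bohr_AP} gives $\#\mathcal{N}_{\alpha}(\ell,t)\ll Z^2$; note that $Z>\sqrt{c(\alpha)}$ forces $2^{\ell}t\gg 1$, so the lemma applies. Combine this with the pointwise bound $N/\varphi(N)\ll \log\log N\ll \log \ell$, valid since $N\le 2^{\ell}$. This yields $Z^2\log\ell$.

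\emph{Third regime, $Z\ge(\log Y)^{\xi}$.} This is the main case. Again use Lemma \ref{bohr_AP} to embed $\mathcal{N}_{\alpha}(\ell,t)\subseteq P(x,y,z)$, with $x,y$ coprime, $x\asymp y\asymp Y$ and $z\ll Z$. Since $N\ge1$ and the summand is positive, it suffices to bound
\[
\sum_{\substack{|a|,|b|\le z\\ ax+by\neq 0}}\frac{|ax+by|}{\varphi(|ax+by|)}.
\]
Expand $n/\varphi(n)=\sum_{d\mid n}\mu^2(d)/\varphi(d)$ and swap sums. For squarefree $d$, the congruence $ax+by\equiv 0\pmod d$ is analysed via $g=\gcd(d,xy)$-type considerations. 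Because $\gcd(x,y)=1$, for each $a$ the condition on $b$ is a single residue class mod $d/\gcd(d,y)$, up to the solvability condition $\gcd(d,y)\mid a$. Thus the number of pairs $(a,b)$ is at most
\[
\ll \Bigl(\frac{z}{d'}+1\Bigr)\Bigl(\frac{z\gcd(d,y)}{d}+1\Bigr),\qquad d'=\gcd(d,y).
\]
For $d\le z$ this is roughly $z^2/d$, a count of the expected size, and it contributes
\[
\sum_{d\le z}\frac{\mu^2(d)}{\varphi(d)}\cdot\frac{z^2}{d}\ll z^2 .
\]
The error terms from the ``$+1$'' parts contribute at most $z\sum_{d\le z}1/\varphi(d)\ll z\log z$, which is fine.

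The main obstacle is the large divisors $d>z$, where lattice-point counting gives no saving. To handle them I would truncate the divisor expansion: for $n\le X$ one has
\[
\frac{n}{\varphi(n)}\ll \sum_{\substack{d\mid n\\ d\le (\log X)^{A}}}\frac{\mu^2(d)}{\varphi(d)},
\]
because the primes of $n$ exceeding $\log X$ contribute a bounded factor and squarefree products of the small primes can be controlled; this is the standard trick used in \cite{bad_rough,HK}. Here $X\ll zY\le 2^{\ell}$, so $\log X\ll \ell$. Since $Y\ge 2^{\ell/2}$ when $t\le 1$, we get $\log X\asymp\log Y$. The divisors can then be restricted to $d\le(\log Y)^{A}$. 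In the third regime this is $\le Z^{A/\xi}$, which is a fixed power of $z$, but possibly still larger than $z$. So I would instead choose the truncation so that $d\le z^{1/2}$ suffices. Precisely: use the bound $n/\varphi(n)\ll\prod_{p\mid n,\,p\le w}(1-1/p)^{-1}$ with $w=(\log Y)^{\xi/4}\le z^{1/4}$. Primes $p>w$ dividing $n$ number at most $\log n/\log w$, contributing $\exp(O(\log n/(w\log w)))=O(1)$. Then expanding over squarefree $d$ composed of primes $\le w$, split $d\le z^{1/2}$ (handled as above) from $d>z^{1/2}$. The latter is handled by Rankin's trick: $\sum_{p\mid d\Rightarrow p\le w,\ d>z^{1/2}}\frac{1}{\varphi(d)}\cdot\frac{z^2}{d}$ is tiny, while the trivial count $z^2$ per such $d$ times $\sum_{d>z^{1/2},\,P^+(d)\le w}1/\varphi(d)$ is $o(z^2)$ by Rankin. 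The case $t>1$ is trivial, since the Bohr set is everything and the classical average of $n/\varphi(n)$ gives $\ll 2^{\ell}\le Z^2$. Combining the regimes gives \eqref{cases_bohr}, with constants depending on $\alpha,\xi$ through Lemma \ref{bohr_AP} and the choice of $w$.
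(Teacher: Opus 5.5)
Your first two regimes are fine and agree with the paper. Reading $c(\alpha)$ as an infimum is the intended meaning, and one enlarges $t$ by a constant factor so that Lemma~\ref{bohr_AP} applies.

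The third regime has a genuine gap, and the main failure is the Rankin step. When $Z\asymp(\log Y)^{\xi}$, which this regime allows, you have $w\asymp z^{1/4}$. Then ``$d>z^{1/2}$'' just means $d>w^{2}$, and the squarefree $w$-smooth tail $\sum_{d>w^2,\,P^+(d)\le w}\mu^2(d)/\varphi(d)$ is not small. Already the $d=mq$ contribute $\gg\log w$, where $q$ is a product of three primes from $(w^{2/3},w]$ and $m$ is any squarefree number built from primes $\le w^{2/3}$. Rankin's trick only saves when $\log z/\log w\to\infty$, which fails here. So the trivial count $z^2$ per such $d$ yields $z^2\log z$, not $o(z^2)$.

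There is also a secondary flaw. For $w=(\log Y)^{\xi/4}$, your estimate $\exp(O(\log n/(w\log w)))$ for the primes $p>w$ dividing $n$ does not give $O(1)$. With $t=2^{-\ell/2}$ you are in the third regime, $\log n$ can be $\asymp\ell\asymp\log Y$, and then $\log n/(w\log w)\to\infty$ for $\xi<4$. The factor is bounded, but this needs Mertens for the primes in $(w,2\log Y]$, namely $\prod_{w<p\le 2\log Y}(1+1/p)\ll\log\log Y/\log w\ll_{\xi}1$. The counting argument is reserved for primes above $\log n$; this is what the paper does.

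What you are missing is that large $d$ are no obstacle at all. Your own count $\bigl(z/d'+1\bigr)\bigl(zd'/d+1\bigr)$ is $\ll z^2/d+z$ for \emph{every} squarefree $d$, because $z/d'$, $zd'/d$ and $1$ are each at most $z$. Restrict $d$ to squarefree $w$-smooth integers, so $d\le D:=\prod_{p\le w}p$ and $\log D\ll w$. Then
\[
\sum_{d\le D}\frac{\mu^2(d)}{\varphi(d)}\Bigl(\frac{z^2}{d}+z\Bigr)\ll z^2+z\log D\ll z^2+zw\ll Z^2 ,
\]
and the last inequality is exactly where the hypothesis $(\log Y)^{\xi}\le Z$ is used. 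The paper runs this argument with the $(\log Y)^{\xi/2}$-smooth part $N^*$ and obtains $Z^2+Z(\log Y)^{\xi}$. Any truncation with $\log D\ll Z$ works, so there was no reason to force $d\le z^{1/2}$.
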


  \begin{proof}
      Since $\max_{1 \leq n \leq 2^{\ell}}n\lVert n \alpha\rVert \geq c(\alpha)$ by definition, $\mathcal{N}_{\alpha}(\ell,t) = \emptyset$ if $\sqrt{c(\alpha)} > Z$. For the remainder, by increasing $t$ if necessary by a constant factor, we can assume now that $Z > 1$. Thus we can apply Lemma \ref{bohr_AP}, which shows
      $\mathcal{N}_{\alpha}(\ell,t) \subseteq P(x,y,z)$ for some coprime integers $x \asymp y \asymp Y$ and $z \ll Z$. Clearly, 
      this shows $\#\mathcal{N}_{\alpha}(\ell,t) \ll Z^2$ and since $\mathcal{N}_{\alpha}(\ell,t) \subseteq [1,2^{\ell}]$, the pointwise estimate $\frac{N}{\varphi(N)} \ll \log \log N$ immediately shows the second case of \eqref{cases_bohr}.      
      It remains to prove the estimate in the case of $(\log Y)^{\xi} \leq Z$.
      Given an integer $N \in \mathcal{N}_{\alpha}(\ell,t)$, we see immediately that $N \leq Y^2$. We denote by
$N^*$ the $(\log Y)^{\xi/2}$-smooth part of $N$, 
    and observe that
    \[\frac{\frac{N}{\varphi(N)}}{\frac{N^{*}}{\varphi(N^*)}} 
    \ll \prod_{\substack{p \mid N\\ p \geq (\log Y)^{\xi/2}}}\left(1 + \frac{1}{p}\right)
    \leq \prod_{\substack{(\log Y)^{\xi/2} \leq p \leq 2 \log Y}}\left(1 + \frac{1}{p}\right)  \cdot \prod_{\substack{p \mid N\\ p \geq \log N}}\left(1 + \frac{1}{p}\right) \ll_{\xi} 1,
    \]
    by Mertens' estimate and the fact that $\#\{p \mid N: p \geq \log N\} \ll \frac{\log N}{\log \log N}$.
This allows us to replace $\frac{N}{\varphi(N)}$ by ${\frac{N^{*}}{\varphi(N^*)}}$ at a cost of only a constant factor.
We will use the standard identity
  \begin{equation}\label{mobius}
    \frac{n}{\varphi(n)}= \sum_{d \mid n}\frac{\mu^2(d)}{\varphi(d)},
  \end{equation}
  where $\mu$ denotes the M\"obius function.
Note that $\mu^2(d) = 1$ and $d$ $(\log Y)^{\xi/2}$-smooth implies for $Y$ sufficiently large
\[d \leq (\log Y)^{\xi/2}! \leq \exp((\log Y)^{\xi}).\]
Combining this with \eqref{mobius},
  we get
  \[
     \sum_{N \in \mathcal{N}_{\alpha}(\ell,t) }
     \frac{N}{\varphi(N)} \ll_{\xi} 
    \sum_{N \in P(x,y,z)}\frac{N^{*}}{\varphi(N^{*})} = \sum_{\substack{d \leq zy\\ p^{+}(d) \leq (\log Y)^{\xi}}}
    \frac{\mu^2(d)}{\varphi(d)}\sum_{\substack{N \in P(x,y,z)\\d \mid N}}1
    \leq \sum_{\substack{d \leq \exp((\log Y)^{\xi})}}\frac{\mu^2(d)}{\varphi(d)}\sum_{\substack{n \in P(x,y,z)\\d \mid n}}1.
  \]
  Following \cite[Proof of Lemma 7.4]{HK} verbatim, we obtain for squarefree $d$ and using the coprimality of $x$ and $y$ that
\[
    \sum_{\substack{n \in P(x,y,z)\\d \mid n}}1 \ll \frac{z^2}{d} + O(z).
  \]
  Summing over $d$ gives

  \[ \sum_{N \in \mathcal{N}_{\alpha}(\ell,t) }
     \frac{N}{\varphi(N)} \ll_{\xi} Z^2\sum_{d \leq \exp((\log Y)^{\xi})}\frac{\mu^2(d)}{d\varphi(d)} +  
     Z \sum_{d \leq \exp((\log Y)^{\xi})}\frac{\mu^2(d)}{\varphi(d)} \ll Z^2 + Z (\log Y)^{\xi}.
     \]
     Since by assumption, $(\log Y)^{\xi} \leq Z$, the result follows.
  \end{proof}

\subsection{Step 3: Variance estimate}\label{sec_var}
We have now gathered all estimates to prove \eqref{qia}, i.e., if 
\begin{equation}\label{meas_acc}\sum_{X \leq k \leq Y}\mu_k\psi_k \in [1,2],\end{equation}
then
    \begin{equation}\label{qia_3}
        \sum_{X \leq k,\ell \leq Y}\sum_{i \in D_k}\sum_{j \in D_{\ell}}\lambda(A_i \cap A_j) \leq C.
    \end{equation}
To that end, let $\varepsilon,\delta,\rho$ be as in Theorem \ref{thm_prime_like}. We now fix a pair $(k,\ell) \in [X,Y]^2$ with $k \leq \ell$ and provide a case distinction depending on the relative sizes of $k,\ell$:

  \begin{itemize}
      \item 
 Case 1: $\ell \geq \frac{3}{\varepsilon}\cdot k$:
   Here we will use the discrepancy bound (III). For this, we define for fixed $i \in D_k$ the slightly enlarged interval $A_i^+ := [i\alpha - \psi_k - \psi_{\ell},i\alpha + \psi_k + \psi_{\ell}] \supseteq A_i$, and observe that for $j \in D_{\ell}$, we have
   \[A_j \cap A_i \neq \emptyset \implies \{j\alpha\} \in A_i^+,\]
as well as $\lambda(A_i \cap A_{j}) \leq \psi_{\ell}.$
Thus for fixed $i \in D_k$, we get   
\[\sum_{j \in D_{\ell}}\lambda(A_i \cap A_{j}) \leq \psi_{\ell}\cdot \#\{j \in D_{\ell}: \{j\alpha\} \in A_i^+\}.\]

Using (III), we have
\[ \#\{j \in D_{\ell}: \{j\alpha\} \in A_i^+\}
= \lambda(A_i^+)\mu_{\ell} + O(\mu_{\ell}^{1-\varepsilon}).
\]
By (II), we have $\mu_{\ell} \gg \frac{2^{\ell}}{\ell}$
and since by \eqref{wlog_i}, we have $\lambda(A_i^+) = \psi_k \geq \frac{1}{2^{2k}} \geq \frac{1}{2^{(2\varepsilon/3)\ell}} \gg \mu_{\ell}^{-\varepsilon}$,
we get
\[\#\{j \in D_{\ell}: \{j\alpha\} \in A_i^+\}\ll \lambda(A_i^+)\mu_{\ell}.\]

Summing over $i \in D_{k}$ shows
    \begin{equation}\label{wish_var} \sum_{i \in D_k}\sum_{j \in D_{\ell}}\lambda(A_i \cap A_{j}) \ll \psi_k\psi_{\ell}\mu_k\mu_{\ell}.\end{equation}

    \item Case 2: $\ell < \frac{3}{\varepsilon}\cdot k$. %Note that using again \eqref{remove_restr} and
    Since the intersection of two intervals is either disjoint or its measure is bounded by the size of the smaller one, we have
    \[\lambda(A_i \cap A_{j}) \leq \psi_{\ell}\mathds{1}_{[\lVert (i - j)\alpha\rVert \leq \psi_k]},\quad 
    i \in D_k, j \in D_{\ell}.
    \]
    
    Thus we get by using Proposition \ref{sieve_prop}
    \[\begin{split}\sum_{i \in D_k}\sum_{j \in D_{\ell}}\lambda(A_i \cap A_{j})
    &\ll \psi_{\ell}\cdot \#\left\{i \in D_k, j \in D_{\ell}: |j-i| \in \mathcal{N}_{\alpha}({\ell},\psi_k)\right\}
    \\& \leq \psi_{\ell}\sum_{h \in \mathcal{N}_{\alpha}({\ell},\psi_k)}\#\{n \in D_k: n + h \in \A\}
    \\& \ll_{\rho} \psi_{\ell}\frac{2^{k}}{d_k^2}\sum_{h \in \mathcal{N}_{\alpha}({\ell},\psi_k)}\frac{h}{\varphi(h)}
    \\&\ll_{\varepsilon}\psi_{\ell}\frac{2^{k}}{d_kd_{\ell}}\sum_{h \in \mathcal{N}_{\alpha}(2^{\ell},\psi_k)}\frac{h}{\varphi(h)},
    \end{split}
    \]
    where we used in the last line that $d_{\ell}/d_k \ll_{\varepsilon} 1$ since $\ell \leq \frac{3}{\varepsilon}\cdot k$.\\
    
    Let $Z_{\ell,k} := \sqrt{2^{\ell}\psi_k}$ and $Y_{\ell,k} := \sqrt{\frac{2^{\ell}}{{\psi_k}}}$.
    We will now apply Lemma \ref{bohr_average} in its different cases depending on the relative sizes of $Z_{\ell,k}$ and $Y_{\ell,k}$.
    \begin{itemize}
    \item Case 2a: $Z_{\ell,k} < \sqrt{c(\alpha)}$: In this case, Lemma \ref{bohr_average} shows that
    $\mathcal{N}_{\alpha}(2^{\ell},\psi_k) = \emptyset$, so \eqref{wish_var} holds trivially for such $k,\ell$.
    \item Case 2b: $\sqrt{c(\alpha)} \leq Z_{\ell,k}, (\log Y_{\ell,k})^{\delta/3} \leq Z_{\ell,k} $. In this case, Lemma \ref{bohr_average} shows

\[\sum_{i \in D_k}\sum_{j \in D_{\ell}}\lambda(A_i \cap A_{j}) \ll_{\delta} \psi_{\ell}\frac{2^{k}}{d_kd_{\ell}}Z_{\ell,k}^2 \ll \mu_k\psi_k \mu_{\ell}\psi_{\ell}.
\]
    \item Case 2c: $\sqrt{c(\alpha)} < Z_{\ell,k} < (\log Y_{\ell,k})^{\delta/3}$.
    We write $\ell^{-} = \ell^{-}(k),\ell^{+} = \ell^{+}(k)$ for the biggest respectively smallest value of $\ell$ that satisfies $\sqrt{c(\alpha)} < Z_{\ell,k} < \log(Y_{\ell,k})^{\delta/3}, \ell \leq \frac{3}{\varepsilon}\cdot k$. 
    
Since $\ell \leq \frac{3}{\varepsilon}\cdot k$, we have $\log \ell \ll_{\varepsilon} \log k$.
    Using $\psi_{\ell} \leq \psi_{k}$ since $k \leq \ell$, we thus get from Lemma \ref{bohr_average} that
\[\begin{split}
\sum_{i \in D_k}\sum_{\ell^{-}\leq \ell \leq \ell^{+}}\sum_{j \in D_{\ell}}\lambda(A_i \cap A_{j})
&\ll \log k \frac{2^{k}}{d_k^2}\sum_{\ell^{-}\leq \ell \leq \ell^{+}} \psi_{\ell}Z_{\ell,k}^2
\\&\ll\log k \frac{2^{k}}{d_k^2}\psi_{k}^ 2\sum_{\ell^{-}\leq\ell \leq \ell^{+}}  2^{\ell}
\\&\ll\log k \frac{2^{k}}{d_k^2}\psi_k Z_{\ell^{+},k}^2 \\&\ll \log k \frac{2^{k}}{d_k^2}\psi_k(\log Y_{\ell^{+},k})^{2\delta/3}.
    \end{split}\]
    Note that $Y_{\ell,k} \leq 2^{\ell}$ and hence $\log Y_{\ell^+,k} \ll_{\varepsilon} k$. Further, by (II), we have
    $d_k \gg k^{\delta}$. Consequently, $d_k \gg (\log Y_{\ell^{+},k})^{2\delta/3} \log k$, and therefore,

\[\sum_{i \in D_k}\sum_{\ell^{-}\leq \ell \leq \ell^{+}}\sum_{j \in D_{\ell}}\lambda(A_i \cap A_{j})
\ll \frac{2^k}{d_k}\psi_k = \mu_k\psi_k.
\]
      \end{itemize}
\end{itemize}
    Combining all the estimates above, we obtain for any fixed $X \leq k \leq Y$,

    \[\sum_{i \in D_k}\sum_{k \leq \ell < Y}\sum_{j \in D_{\ell}}\lambda(A_i \cap A_{j})
    \ll \mu_k\psi_k \left(1 + \sum_{k \leq \ell < Y}\mu_{\ell}\psi_{\ell}\right)
    \ll \mu_k\psi_k \sum_{X \leq \ell < Y}\mu_{\ell}\psi_{\ell},
    \]
    where in the last line we used assumption \eqref{meas_acc}.
    Summing over $k$, this finally shows \eqref{qia_3}.
    We are now in the position to apply Lemma \ref{bdv_applied}. This shows that

\[ BAD \subseteq \bigcap_{\psi \in \mathcal{M}_{\A}}K_{\A}(\psi).\]

\subsection{Step 4: Counterexamples for well-approximable numbers}\label{counterex_sec}

In this section, we prove the remaining set inclusion of Theorem \ref{thm_prime_like}, that is, 
\begin{equation}\label{counter_1}\bigcap_{\psi \in \mathcal{M}_{\A}}K_{\A}(\psi) \subseteq BAD_{\p}.\end{equation}
Before proving this, we explain how this implies the ``in particular'' statement of Theorem \ref{thm_prime_like}.
Note that \[BAD_{\p} \subseteq BAD_{\mathbb{P}} \subseteq \left\{\alpha \in \mathbb{R}: \lVert n\alpha \rVert \leq \frac{1}{n \log n \log \log n} \text{ only finitely often}\right\}.\]
However, by Khintchine's Theorem applied with $\psi(n) = \frac{1}{n \log n \log \log n}$, we have (see \eqref{khin_condition}), 
$\lambda(W(\psi)) = 1$. Consequently, since $\bigcap_{\psi \in \mathcal{M}_{\A}}K_{\A}(\psi) \subseteq BAD_{\p} \subseteq [0,1]\setminus W(\psi)$, it follows that 
$\lambda(\bigcap_{\psi \in \mathcal{M}_{\A}}K_{\A}(\psi)) = 0$. This will show the ``in particular'' statement of Theorem \ref{thm_prime_like}, since by $BAD \subseteq \bigcap_{\psi \in \mathcal{M}_{\A}}K_{\A}(\psi)$ and $\dim_H(BAD) = 1$, we have that $\bigcap_{\psi \in \mathcal{M}_{\A}}K_{\A}(\psi)$ has full Hausdorff dimension.\\

The statement provided below will not only show \eqref{counter_1}, but will be general enough to also deduce 
$\bigcap_{\psi \in \cM_{\A}}\mathcal{K}_{\A}(\psi) \subseteq BAD$ in the setup of Theorem \ref{thm_pos_dens}.
Before we prove this, let us define a family of functions that will serve the purpose of (approximate) density functions.
Let $f : \mathbb{N} \to [0,\infty)$ be a monotonically increasing function satisfying
\begin{equation}\label{pseudo_log}
f(x^n) \ll_n f(x),\quad x \geq 1.
\end{equation}
We remark that \eqref{pseudo_log} is in particular satisfied for 
$f(x) = (\log x)^C$ for all $C \geq 0$.
With this defined, let us state the main result of this section.

\begin{prop}\label{counterex_non_bad}
    Let $\A \subseteq \mathbb{N}$ be a set that satisfies
    \begin{equation}\label{dens_f}
        \liminf_{N \to \infty} \frac{\#\{n \leq N: n \in \A\}f(N)}{N} > 0,
    \end{equation}
    where $f$ is a function satisfying \eqref{pseudo_log}. Then for every real $\alpha$ where 
    \begin{equation}\label{well_approx_log}\liminf_{q \to \infty} qf(q)\lVert q\alpha \rVert = 0,\end{equation}
    there exists $\psi \in \mathcal{M}_A$ such that 
    $\alpha \notin \mathcal{K}_{\A}(\psi)$.
\end{prop}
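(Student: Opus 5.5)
The plan is a Kurzweil-type construction. We build $\psi$ block by block along a rapidly growing subsequence of good approximations to $\alpha$. On each block, the targets $A_n$ for $n\in\A$ collapse into a union of small total measure, while $\sum_{n\in\A}\psi(n)$ still gains a constant amount. The convergence Borel--Cantelli lemma, applied to these unions, then gives $\lambda(T_{\A}(\psi,\alpha))=0$, so $\alpha\notin\mathcal{K}_{\A}(\psi)$.

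\textbf{Step 1: the block parameters.} By \eqref{well_approx_log}, pick integers $q_1<q_2<\dots$ with $\varepsilon_k := q_kf(q_k)\lVert q_k\alpha\rVert \to 0$. After passing to a sparse subsequence, we may assume $\sum_k \varepsilon_k^{1/2}<\infty$ and that $q_{k+1}$ is much larger than all parameters of block $k$. Set $t_k := \varepsilon_k^{-1/2}$ and $M_k := \lceil t_k q_k f(q_k)\rceil$.

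Condition \eqref{pseudo_log} forces $f$ to grow slower than any power: iterating $f(x^2)\ll f(x)$ bounds $f$ polylogarithmically in terms of $\log$. Hence, capping $t_k$ at $q_k^{1/2}$ if necessary (and thinning the subsequence so that the relevant sums stay summable), we get $M_k\le q_k^2$. By \eqref{pseudo_log} this gives $f(M_k)\asymp f(q_k)$.

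Define $c_k := \kappa f(M_k)/M_k$ for a small constant $\kappa>0$. Then set $\psi(n):=c_k$ for $q_k\le n\le M_k$, and $\psi(n):=c_{k+1}$ for $M_k<n<q_{k+1}$. Since $c_k$ decreases in $k$ (after thinning), $\psi$ is monotonically decreasing.

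\textbf{Step 2: divergence.} By \eqref{dens_f}, $\#(\A\cap[1,M_k])\gg M_k/f(M_k)$. On the other hand, $q_k c_k \ll q_k f(q_k)/M_k \le 1/t_k\to 0$, so the part of $[1,M_k]$ below $q_k$ contributes negligibly. Therefore
\[\sum_{n\in\A\cap[q_k,M_k]}\psi(n)\ge c_k\bigl(\#(\A\cap[1,M_k])-q_k\bigr)\gg 1\]
for every large $k$, and so $\psi\in\mathcal{M}_{\A}$.

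\textbf{Step 3: small measure of the unions (the key step).} Write $n\in[q_k,M_k]$ as $n=aq_k+r$ with $0\le r<q_k$ and $0\le a\le M_k/q_k$. Then $\lVert n\alpha-r\alpha\rVert\le (M_k/q_k)\lVert q_k\alpha\rVert$. Hence
\[\bigcup_{n\in[q_k,M_k]}A_n\subseteq\bigcup_{r<q_k}\Bigl[r\alpha-c_k-\tfrac{M_k}{q_k}\lVert q_k\alpha\rVert,\; r\alpha+c_k+\tfrac{M_k}{q_k}\lVert q_k\alpha\rVert\Bigr],\]
which has measure at most
\[2q_kc_k+2M_k\lVert q_k\alpha\rVert\ll \frac{f(M_k)}{f(q_k)t_k}+t_k\varepsilon_k\ll \varepsilon_k^{1/2}.\]
For the gap $(M_k,q_{k+1})$, the trivial bound gives
\[\lambda\Bigl(\bigcup_{M_k<n<q_{k+1}}A_n\Bigr)\le 2q_{k+1}c_{k+1}\ll 1/t_{k+1}=\varepsilon_{k+1}^{1/2}.\]
Both bounds are summable in $k$. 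Since every $n\ge q_1$ lies in one of these finitely-indexed unions, each $\gamma\in T_{\A}(\psi,\alpha)$ lies in infinitely many of them. The convergence Borel--Cantelli lemma therefore gives $\lambda(T_{\A}(\psi,\alpha))=0$.

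The main obstacle is the bookkeeping in Steps 1 and 3. The choice of $M_k$ must balance the covering error $M_k\lVert q_k\alpha\rVert$, which needs $t_k\varepsilon_k\to0$, against the lost density $q_kf(M_k)/M_k$, which needs $t_k\to\infty$ together with $f(M_k)\ll f(q_k)$. This is exactly where \eqref{pseudo_log} and $M_k\le q_k^{O(1)}$ enter. In addition, the sparse subsequence has to be chosen so that the gap contributions are summable and $\psi$ stays monotone.
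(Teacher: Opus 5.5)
Your proposal is correct and follows essentially the same Kurzweil-type construction as the paper. In both, $\psi$ is constant, of size $\approx 1/(t_kq_k)$, on blocks reaching up to $\approx t_kq_kf(q_k)$; divergence follows from \eqref{dens_f} together with \eqref{pseudo_log}; and each block's targets are covered by $q_k$ short arcs before applying convergence Borel--Cantelli. The paper instead takes $t_k=2^k$, uses convergent denominators with $\lVert q_{n_k}\alpha\rVert\le 4^{-k}/(q_{n_k}f(q_{n_k}))$, and centres the arcs at $a/q_{n_k}$ rather than at $r\alpha$; your centring at $r\alpha$ removes the need for convergents, and your remark that \eqref{pseudo_log} forces polylogarithmic growth of $f$ cleanly justifies $f(M_k)\asymp f(q_k)$.
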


Before we prove the statement, let us quickly show that this implies \eqref{counter_1}:
Indeed, this follows by setting $f(x) := f_{\mathcal{P}}(x)$ and observing that

\[\frac{f_{\mathcal{P}}(x^n)}{f_{\mathcal{P}}(x)}
= \prod_{\substack{x < \leq p \leq x^n\\ p \in \mathcal{P}}} \left(1 + \frac{1}{p}\right) \leq \prod_{\substack{x < \leq p \leq x^n\\ p \in \mathbb{P}}} \left(1 + \frac{1}{p}\right) \ll_n 1,
\]
by Mertens' estimate, which shows \eqref{pseudo_log}. Hence, we may apply Proposition \ref{counterex_non_bad}, and \eqref{counter_1} follows. Thus we are left to show Proposition \ref{counterex_non_bad}.

\begin{proof}
    This builds upon the strategy of Kurzweil's original article \cite{kurzweil}. 
   By dividing $f$ by a suitable constant, we may assume without loss of generality that $\liminf_{N \to \infty} \frac{\#\{n \leq N: n \in \A\}f(N)}{N} \geq 1$ -- note that this will not change the assumption of \eqref{well_approx_log}.
    Using \eqref{well_approx_log}, we find a sequence of convergent denominators $q_{n_k} = q_{n_k}(\alpha)$ such that $\lVert q_{n_k}\alpha\rVert \leq \frac{4^{-k}}{q_{n_k} f(q_{n_k})}$. We may choose the sequence $(n_k)_k$ arbitrarily sparse, which will be specified later.
    Next, we define sets $(S_k)_{k \in \N}$ with 
    \[S_0 = \emptyset, \quad S_k = [1,2^{k}q_{n_k} f(q_{n_k})] \setminus \bigcup_{\ell < k}S_{\ell}, \quad k \geq 1,\] and assume $q_{n_k}$ so sparse that
    $\# (S_k \cap \mathcal{A}) > \frac{\max S_k}{2 f(\max S_k)}$.  We will now define $\psi_{|_{S_k}} \equiv  \psi_k :=\frac{2^{-k}}{q_{n_k}}$ and claim that $\psi \in \mathcal{M}_{\A}$: Clearly, $\psi$ is monotonically decreasing, so it remains to prove $\sum_{n \in \A} \psi(n) = \infty.$ We observe that by \eqref{dens_f},
\[\sum_{n \in \A} \psi(n) = \sum_{k \geq 1} \psi_k \# (S_k \cap \mathcal{A})
\geq \frac{1}{2} \sum_{k \geq 1} \psi_k \frac{\#S_k}{f(\max S_k)} 
\gg \sum_{k \geq 1} \frac{2^{-k}}{q_{n_k}} \frac{2^k q_{n_k} f(q_{n_k})}{f(2^kq_{n_k}f(q_{n_k}))}.
\]
Since we may choose $q_{n_k} > 2^k$ by picking $(n_k)_k$ sparse enough, and $f(x) \leq x$ for $x$ sufficiently large, we have by \eqref{pseudo_log} that for $k$ sufficiently large,
\[f(2^kq_{n_k}f(q_{n_k})) \leq f(q_{n_k}^3) \ll f(q_{n_k}).\] Consequently, 
we obtain $\sum_{n \in \A} \psi(n) \gg \sum_{k \geq 1} 1 = \infty,$ proving $\psi \in \mathcal{M}_{\A}$.\\

On the other hand, we will show that we have
\begin{equation}
    \label{conv_BC}
    \lambda\left(\bigcup_{n \in S_k \cap \A} A_n\right) \ll 2^{-k}, \quad k \geq 1,
\end{equation}
which by the convergence Borel--Cantelli Lemma will immediately prove $\alpha \notin \mathcal{K}_{\A}(\psi)$.\\

In order to show \eqref{conv_BC}, we observe that since $\lVert q_{n_k}\alpha\rVert \leq \frac{4^{-k}}{q_{n_k} f(q_{n_k})}$, for all $j \leq 2^kf(q_{n_k})$ and all $0 \leq m \leq q_{n_k}-1$ that
\begin{equation}\label{error-pkqk}(m + jq_{n_k})\alpha \equiv m \frac{p_{n_k}}{q_{n_k}} + \left(m + jq_{n_k}\right)\left(\alpha - \frac{p_{n_k}}{q_{n_k}}\right)
\equiv \frac{a}{q_{n_k}} + O\left(\frac{2^{-k}}{q_{n_k}} \right) \pmod 1,
\end{equation}
where $a = a(m) := p_{n_k}m \pmod {q_{n_k}}$. This implies that for any $0 \leq m \leq q_{n_k}-1$,
\[
\bigcup_{\substack{n \in \A \cap S_k\\ n \equiv m \pmod {q_{n_k}}}} A_{n} \subseteq \left[\frac{a}{q_{n_k}} - \psi_k -  O\left(\frac{2^{-k}}{q_{n_k}} \right), \frac{a}{q_{n_k}} + \psi_k + O\left(\frac{2^{-k}}{q_{n_k}} \right)\right] \pmod 1,
\]
thus (recall $\psi_k = \frac{2^{-k}}{q_{n_k}}$),
\[\lambda\Big(\bigcup_{\substack{n \in \A \cap S_k\\ n \equiv m \pmod {q_{n_k}}}} A_{n}\Big)
\ll \frac{2^{-k}}{q_{n_k}}.
\]
Summing over $m = 0,\ldots,q_{n_{k}}-1$, and applying the union bound, proves 
\eqref{conv_BC}, finishing the statement.
\end{proof}

As mentioned in the Introduction and can be seen in the proof of Proposition \ref{counterex_non_bad}, we do not make use of any particular structure of $\A$ to deduce \eqref{counter_1}, besides an estimate on the density. This might be optimal for some sequences $\A$; however, if we use a multiplicative structure as in Theorem \ref{thm_prime_like}, we can actually find counterexamples that are slightly worse approximable than the rate provided by the rare density argument used in Proposition \ref{counterex_non_bad}. As an illustration, we show this below for the case of $\A = \mathbb{P}$.
Recall that for a function $f$, we defined $BAD_f := \{\lim_{n \to \infty} n f(n)\lVert n\alpha\rVert > 0\}$.

\begin{prop}\label{exploit_prime_counterex}
    For any $\varepsilon > 0$, there exists $\alpha \in BAD_{\log/(\log \log \log)^{1-\varepsilon}}$ such that 
    $\alpha \notin \bigcap_{\psi \in \mathcal{M}_{\mathbb{P}}}{\mathcal{K}}_{\mathbb{P}}(\psi)$.
\end{prop}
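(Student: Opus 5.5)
The plan is to combine the Kurzweil-type argument from the proof of Proposition \ref{counterex_non_bad} with the multiplicative structure of the primes. Primes $p \nmid q$ occupy only the $\varphi(q)$ reduced residue classes modulo $q$. So if some convergent denominator $q = q_{n_k}$ is divisible by many small primes, the union $\bigcup_{p \in S_k\cap\mathbb{P}} A_p$ clusters around only $\varphi(q)$ points $a/q$ instead of $q$ of them. This gains a factor $\varphi(q)/q$ over the density argument. We will therefore build $\alpha$ by prescribing its continued fraction expansion, so that infinitely many convergent denominators are divisible by a primorial, while keeping $\alpha \in BAD_g$ with $g(n) := \log n/(\log\log\log n)^{1-\varepsilon}$.

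\emph{Construction of $\alpha$.} All partial quotients are $1$ except at a very sparse set of indices $n_k$. Given $q_{n_k-1}, q_{n_k-2}$, let $P_k := \prod_{p \leq y_k} p$ with $y_k \approx \tfrac12 \log\log q_{n_k-1}$, so that $P_k \leq (\log q_{n_k-1})^{1/2+o(1)}$. Since $\gcd(q_{n_k-1},q_{n_k-2}) = 1$, we can choose $a_{n_k} \in [1,P_k]$, coprime to $P_k$ on the relevant residues, with $a_{n_k} q_{n_k-1} + q_{n_k-2} \equiv 0 \pmod{P_k}$. Then $q := q_{n_k}$ satisfies $P_k \mid q$, hence $\varphi(q)/q \leq \varphi(P_k)/P_k \asymp 1/\log y_k \asymp 1/\log\log\log q$. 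Next choose $a_{n_k+1} \asymp 4^{k} f_k$ with $f_k := \log q/\log\log\log q$, so that $\lVert q\alpha\rVert \leq 4^{-k}/(q f_k)$.

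\emph{Checking $\alpha \in BAD_g$.} Since $q_m\lVert q_m\alpha\rVert \asymp 1/a_{m+1}$, we need $a_{m+1} \ll g(q_m)$ for all $m$. At $m = n_k-1$ this holds because $a_{n_k} \leq P_k \leq (\log q_{n_k-1})^{1/2+o(1)}$. At $m = n_k$ we have $a_{n_k+1}/g(q) \asymp 4^k (\log\log\log q)^{-\varepsilon}$, which is bounded provided $q_{n_k}$ is chosen so huge that $(\log\log\log q_{n_k})^{\varepsilon} \geq 4^k$. This is possible by inserting long stretches of $1$'s before $n_k$. All other partial quotients are $1$, so these indices cause no problem.

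\emph{The counterexample $\psi$.} Mimicking Proposition \ref{counterex_non_bad}, set $S_k := [1, 2^k q_{n_k} f_k] \setminus \bigcup_{\ell<k} S_\ell$ and $\psi|_{S_k} \equiv \psi_k := 2^{-k}/q_{n_k}$. By the prime number theorem, $\#(S_k\cap\mathbb{P}) \gg 2^k q_{n_k} f_k/\log q_{n_k}$, so $\sum_{p \in S_k} \psi(p) \gg 1/\log\log\log q_{n_k}$ when $(n_k)$ is sparse enough. This is not summable to infinity as it stands, so the normalisation needs care. I would instead take $f_k := \log q_{n_k}\cdot\varphi(q_{n_k})/q_{n_k}$, which is at least of order $\log q/\log\log\log q$ and is still compatible with the $BAD_g$ bound above. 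With this choice each block contributes $\gg 1$, giving $\psi \in \mathcal{M}_{\mathbb{P}}$.

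\emph{The union bound.} As in \eqref{error-pkqk}, for $n = m + jq_{n_k} \leq 2^k q_{n_k} f_k$ we have $n\alpha \equiv a(m)/q_{n_k} + O(2^{-k}/q_{n_k})$. Every prime $p \in S_k$ with $p \nmid q_{n_k}$ has $\gcd(m,q_{n_k}) = 1$. Hence $\bigcup_{p\in S_k} A_p$ is covered by $\varphi(q_{n_k}) + \omega(q_{n_k})$ intervals of length $\ll 2^{-k}/q_{n_k}$. Its measure is therefore $\ll 2^{-k}\varphi(q_{n_k})/q_{n_k} \leq 2^{-k}$, and the convergence Borel--Cantelli lemma gives $\lambda(T_{\mathbb{P}}(\psi,\alpha)) = 0$.

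The main obstacle is the bookkeeping that makes three requirements compatible simultaneously. First, the divergence per block needs the normalisation $f_k \approx \log q\cdot\varphi(q)/q$. Second, the approximation error $2^k f_k\lVert q\alpha\rVert$ must be $\ll \psi_k$. Third, the Bad-type lower bound $q\lVert q\alpha\rVert g(q) \gg 1$ must hold at every convergent, including the index where $a_{n_k} \approx P_k$ is large. The choice $y_k \approx \tfrac12\log\log q_{n_k-1}$ is exactly what forces the triple logarithm, and the $\varepsilon$-loss is what absorbs the factor $4^k$.
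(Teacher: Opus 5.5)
\textbf{The divergence step fails.} Write $q=q_{n_k}$. With $\psi_k=2^{-k}/q$ and $S_k$ ending at $2^kqf_k$, each block contributes $\psi_k\,\#(S_k\cap\mathbb{P})\asymp f_k/\log q$. Your replacement $f_k=\log q\cdot\varphi(q)/q$ therefore gives a block sum $\asymp\varphi(q)/q\ll(\log\log\log q)^{-1}$. This is no better than your first attempt. Note also that $P_k\mid q$ makes this $f_k$ \emph{at most}, not at least, of order $\log q/\log\log\log q$.

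No other choice of $f_k$ rescues it, because your three requirements are incompatible as stated. Clusters of width $\asymp\psi_k$ need the drift $2^kf_k\lVert q\alpha\rVert$ to be $\ll 2^{-k}/q$, i.e.\ $a_{n_k+1}\gg 4^kf_k$. Membership in $BAD_g$ needs $a_{n_k+1}\ll g(q)\le\log q$. Together these force $f_k\ll 4^{-k}\log q$, so the block sums are $\ll 4^{-k}$, $\sum_p\psi(p)<\infty$, and $\psi\notin\mathcal{M}_{\mathbb{P}}$.

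Symptomatically, your union bound $\ll 2^{-k}\varphi(q)/q$ is stronger than needed by exactly the factor the divergence is missing. The saving $\varphi(q)/q$ must be \emph{traded} against the density loss of the primes, not banked. The paper does this as follows:
\begin{itemize}
\item let $S_k$ run up to about $2^kq\log q$, so each block contributes $\gg 1$;
\item take $a_{n_k+1}\asymp 4^k\log q/(\log\log\log q)^{1-\varepsilon/2}$;
\item accept a drift $2^k\log q\,\lVert q\alpha\rVert\asymp 2^{-k}(\log\log\log q)^{1-\varepsilon/2}/q$, much larger than $\psi_k$.
\end{itemize}
The $\varphi(q)+O(\omega(q))$ clusters then have total measure $\ll 2^{-k}(\log\log\log q)^{1-\varepsilon/2}\varphi(q)/q\ll 2^{-k}$.

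An equivalent repair within your setup is also available. Keep $f_k\asymp\log q/\log\log\log q$ and $a_{n_k+1}\asymp 4^kf_k$, so the drift is $\le 2^{-k}/q$, but enlarge $\psi_k$ to $2^{-k}/\varphi(q)$. Each block is then $\asymp (q/\varphi(q))/\log\log\log q\gg 1$, since $q/\varphi(q)\ge P_k/\varphi(P_k)$. The union is still $\ll\varphi(q)\psi_k\ll 2^{-k}$.

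\textbf{There is a second, smaller gap in the construction.} The congruence $a_{n_k}q_{n_k-1}+q_{n_k-2}\equiv 0\pmod p$ is solvable only if $p\nmid q_{n_k-1}$. If $p\mid q_{n_k-1}$, coprimality of consecutive denominators gives $q_{n_k}\equiv q_{n_k-2}\not\equiv 0\pmod p$ for \emph{every} choice of $a_{n_k}$. So $\gcd(q_{n_k-1},q_{n_k-2})=1$ is the obstruction, not the justification. After a run of $1$'s you also cannot simply assume $\gcd(q_{n_k-1},P_k)=1$; already every third term is even.

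The paper inserts one more special partial quotient $a_{n_k-1}\le P_k$, chosen by the Chinese Remainder Theorem so that $q_{n_k-1}\not\equiv 0\pmod p$ for all $p\le y_k$. It is best to define $y_k$ from $q_{n_k-2}$ to avoid circularity. For $p\mid q_{n_k-2}$ the condition is automatic; otherwise one residue class of $a_{n_k-1}$ modulo $p$ is excluded. Only then can $a_{n_k}$ be chosen with $P_k\mid q_{n_k}$. This extra quotient is harmless for the $BAD_g$ check.

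The rest of your argument is fine: reducing $BAD_g$ to convergents, and observing that primes $p\nmid q$ fall into the $\varphi(q)$ reduced classes with only $\omega(q)$ exceptions.
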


\begin{proof}
    We first construct $\alpha \in BAD_{\log/(\log \log \log)^{1-\varepsilon}}$ such that for infinitely many
    $q_k$ that lead to good approximations, we have 
    $\frac{\varphi(q_k)}{q_k} \ll (\log \log \log q_k)^{-1}$. We do this inductively for $k \geq 0$ along a subsequence $(j_k)_k$ by constructing $3$ suitable consecutive partial quotients, that will provide for $n_k = j_{k}+2$ the following estimates:
    
    \begin{align}
    \label{good_approx}\lVert q_{n_k}\alpha\rVert &\leq \frac{4^{-k}}{q_{n_k}(\log \log \log q_{n_k})^{1-\varepsilon/2}},\\
    \label{small_phi} \frac{\varphi(q_{n_k})}{q_{n_k}} &\ll (\log \log \log q_{n_k})^{-1}.
    \end{align}  
    Having an initial segment $(q_n)_{n \leq k}$ constructed, we have different candidates for $a_{k+1}$, and we will choose a suitable one: Let 
    $q_{k+1}(a) := aq_k + q_{k-1}$.
    We now choose $a$ such that for all small primes $p$, we get
    $p \nmid q_{k+1}(a)$. More precisely, let $\p(q_k) := \{p \in \mathbb{P}: p \leq (\log \log q_k)/2\}$.
    We claim that the congruence system
    \begin{align}\label{cong_sys1}
        q_{k+1}(a) = aq_k + q_{k-1} \not \equiv 0 \pmod p, \quad \forall p \in \p(q_k)
    \end{align}
    has a solution $a \asymp \log q_k/(\log \log \log q_k)^{1-\varepsilon}$. Indeed, observe that for every $p \in \p(q_k)$ where $p \mid q_k$, we have $q_{k+1}(a) \equiv q_{k-1} \not \equiv 0 \pmod p$ since $q_{k-1},q_k$ are coprime.
    If $p \nmid q_k$, then $(a q_k)_{a = 0}^{p-1}$ runs through all residued classes mod $p$, thus in particular there exists one $a \in \mathbb{Z}_p$ such that $ aq_k + q_{k-1}  \equiv 1 \pmod p$.
    Thus by the Chinese Remainder Theorem, there exists at least one solution for \eqref{cong_sys1} mod $\prod_{p \in \p(q_k)}p$. Note that $\prod_{p \in \p(q_k)}p
    \leq 4^{\log \log q_k/2} \ll \log q_k /\log \log q_k$ by a standard estimate on the primorial. This allows us to pick 
    $a_0 = a_0(k) \asymp \log q_k/(\log \log \log q_k)^{1-\varepsilon}$ such that \eqref{cong_sys1} holds.
    \\

    Having $q_{k+1} = q_{k+1}(a_0)$ fixed, we now provide a similar procedure for $q_{k+2}$ in place of $q_{k+1}$. 
    However, in this case, we aim for
       \begin{align}\label{cong_sys2}
        q_{k+2}(b) = bq_{k+1} + q_{k} \equiv 0 \pmod p, \quad \forall p \in \p(q_k).
    \end{align}
By construction, $q_{k+1}$ is coprime to any $p \in \p(q_k)$ (this was in fact the whole point of constructing $a_0$). Thus another application of the Chinese Remainder Theorem
allows us to find $b_0 = b_0(k) \asymp \log q_k/(\log \log \log q_k)^{1-\varepsilon}$ such that \eqref{cong_sys2} has a solution.\\

Finally, we let $c_0 = c_0(k) \asymp \log q_k/(\log \log \log q_k)^{1-\varepsilon}$ be an arbitrary integer
and set $q_{k+3} = c_0q_{k+2} + q_{k+1}$.

We now apply this procedure iteratively along an arbitrarily sparse subsequence $(j_k)_k$, filling the other partial quotients with $1$'s. This means that we construct \[\alpha = [0;1,1,\ldots,1,a_0(j_1),b_0(j_1),c_0(j_1),1,\ldots,1,a_0(j_2),b_0(j_2),c_0(j_2)1,\ldots].\]

We claim that $\alpha \in BAD_{\log/(\log \log \log)^{1-\varepsilon}}$: Indeed, with $\alpha = [0;a_1,a_2,\ldots]$, we have
$q_n \lVert q_n \alpha \rVert \asymp \frac{1}{a_{n+1}} \gg \frac{(\log \log \log q_n)^{1-\varepsilon}}{\log q_n}$,
which is sharp when $n = j_k+2$, proving \eqref{good_approx} by assuming $j_k$ to be sufficiently large.
On the other hand, we have
\[\varphi(q_{j_k+2})/q_{j_k+2} \asymp \prod_{p \leq \log \log q_{j_k}/2} \left(1 - \frac{1}{p}\right)
\ll \frac{1}{\log \log \log q_{j_k+2}},
\]
by Mertens' estimate, proving \eqref{small_phi}.\\

Following the proof of Proposition \ref{counterex_non_bad}, we get by \eqref{good_approx} and, if necessary, choosing the sequence $(n_k)_k$ sparser,
that $\lVert q_{n_k}\alpha\rVert \leq \frac{4^{-k}}{q_{n_k}f(q_{n_k})}$ 
with $f(q_{n_k}) = \frac{\log q_n}{(\log \log \log q_n)^{1-\varepsilon/2}}$. We define $S_k$ and $\psi$ as in the proof of Proposition \ref{counterex_non_bad}, which proves immediately that
$\sum_{p \in \mathbb{P}}\psi(p) = \infty.$

The crucial point is to prove \eqref{conv_BC}, which would fail if we dropped the condition on $\A$ completely, since
\eqref{error-pkqk} gets replaced with the weaker error term $O\left(\frac{2^{-k} (\log \log \log q_{n_k})^{1-\varepsilon}}{q_{n_k}} \right)$, and the aim is to win back the factor $(\log \log \log q_{n_k})^{1-\varepsilon}$.
For this, note that if $m + jq_{n_k} \in \mathbb{P}$ with $j \geq 2$, then $a(m) := p_{n_k}m \pmod {q_{n_k}}$ is coprime to $q_{n_k}$:
Indeed, assume for a prime $p_0$ that 
\[p_0 \mid \gcd(a(m),q_{n_k}) = \gcd(p_{n_k}(m+jq_{n_k}),q_{n_k}) = \gcd((m+jq_{n_k}),q_{n_k}),\] where we used that $q_{n_k},p_{n_k}$ are coprime. Since $m+jq_{n_k} \in \mathbb{P}$, this implies $p_0 = m+jq_{n_k}$ and $p_0 \mid q_{n_k}$. In particular, $m+jq_{n_k} \leq q_{n_k}$, a contradiction to $j \geq 2$.
Thus we get
\[\begin{split}&\lambda\left(\bigcup_{n \in S_k \cap \mathbb{P}} A_p\right) \ll 
\lambda\left(\bigcup_{\substack{p \in S_k \cap \mathbb{P}\\p \leq 2q_{n_k}}} A_p\right)
+ \\&\lambda\left( \bigcup_{\substack{0 \leq j \leq 2^k\log q_{n_k}+1\\(a,q_{n_k}) = 1}}\left[\frac{a}{q_{n_k}} - \frac{2^{-k}}{q_{n_k}}- O\left(\frac{4^{-k} (j\log \log \log q_{n_k})^{1-\varepsilon/2}}{q_k \log q_{n_k}}\right),\frac{a}{q_{n_k}} + \frac{2^{-k}}{q_{n_k}}+ O\left(\frac{4^{-k} (j\log \log \log q_{n_k})^{1-\varepsilon/2}}{q_k \log q_{n_k}}\right) \right]\right)
\\&\ll \frac{2^{-k}}{\log q_{n_k}} + \frac{\varphi(q_{n_k})}{q_{n_k}}(\log \log \log q_{n_k})^{1-\varepsilon/2}
\ll 2^{-k},
\end{split}
\]
where we used \eqref{small_phi} in the last line. This shows \eqref{conv_BC}, and the result follows.
\end{proof}

\section{Proof of Corollaries \ref{prime_cor} - \ref{intersec_cor}}\label{subsec_prime_s2}

We will prove Corollaries \ref{prime_cor} - \ref{intersec_cor} for certain subsets of the original sets $\bS_2, \mathbb{L}, \bS_2 \cap \mathbb{L}$ that will be of positive relative density, but with a cleaner sieve setup (see \eqref{sieve_s2} - \eqref{sieve_i}) below) that allows us to deduce property (III).
By the following result, this suffices to prove the actual corollaries:

\begin{prop}\label{pos_rel_dens_suff}
    Let $\A' \subseteq \A$ with $\liminf_{N \to \infty} \frac{\#\{n \leq N: n \in \A'\}}{\#\{n \leq N: n \in \A\}} > 0$. 
    Then 
    \[\bigcap_{\psi \in \mathcal{M}_{\A'}}K_{\A'}(\psi) \subseteq \bigcap_{\psi \in \mathcal{M}_{\A}}K_{\A}(\psi).\]
\end{prop}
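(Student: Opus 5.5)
The plan is to fix $\alpha \in \bigcap_{\psi \in \mathcal{M}_{\A'}}K_{\A'}(\psi)$ and an arbitrary $\psi \in \mathcal{M}_{\A}$, and to show that $\alpha \in K_{\A}(\psi)$. Since $\A' \subseteq \A$, we have $T_{\A'}(\psi,\alpha) \subseteq T_{\A}(\psi,\alpha)$. So it suffices to show $\psi \in \mathcal{M}_{\A'}$; the hypothesis then gives $\lambda(T_{\A'}(\psi,\alpha)) = 1$, hence $\lambda(T_{\A}(\psi,\alpha)) = 1$. Monotonicity of $\psi$ is unchanged, so everything reduces to the implication
\[\sum_{n \in \A}\psi(n) = \infty \implies \sum_{n \in \A'}\psi(n) = \infty .\]

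To prove this I would use summation by parts, exactly as in the positive-density case of Proposition \ref{wlog_psi}(iii). Write $S(N) := \#\{n \leq N: n \in \A\}$ and $S'(N) := \#\{n \leq N : n \in \A'\}$. By the relative density hypothesis there are $c > 0$ and $N_0$ with $S'(N) \geq c S(N)$ for all $N \geq N_0$. Abel summation gives, for every $M$,
\[\sum_{\substack{n \in \A'\\ n\leq M}}\psi(n) = S'(M)\psi(M) + \sum_{N < M} S'(N)\bigl(\psi(N) - \psi(N+1)\bigr).\]
All terms on the right are nonnegative because $\psi$ is decreasing and nonnegative. Replacing $S'(N)$ by $cS(N)$ for $N \geq N_0$ therefore gives a lower bound of the form
\[c\Bigl(S(M)\psi(M) + \sum_{N_0 \le N < M} S(N)\bigl(\psi(N)-\psi(N+1)\bigr)\Bigr).\]
This equals $c\sum_{n \in \A, N_0 < n \leq M}\psi(n)$ plus a term $c\,S(N_0)\psi(N_0)$, which is bounded and nonnegative. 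Letting $M \to \infty$, divergence of $\sum_{n\in\A}\psi(n)$ forces divergence of $\sum_{n \in \A'}\psi(n)$.

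I do not expect a real obstacle. The one point needing care is the bookkeeping in the Abel summation: the boundary term $S'(M)\psi(M)$ must be kept rather than dropped, because $S(M)\psi(M)$ need not tend to zero. Keeping it, the comparison is an identity-level inequality holding for every $M$, and no limit of $S(M)\psi(M)$ is needed.
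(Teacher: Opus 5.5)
Your proof is correct and follows the paper's argument. You show $\mathcal{M}_{\mathcal{A}} \subseteq \mathcal{M}_{\mathcal{A}'}$ by summation by parts against the relative density, and then use $T_{\mathcal{A}'}(\psi,\alpha) \subseteq T_{\mathcal{A}}(\psi,\alpha)$. The only difference is cosmetic: the paper first reindexes $\mathcal{A}$ to reduce to the case $\mathcal{A} = \mathbb{N}$ and calls the Abel summation a ``straightforward computation,'' whereas you carry it out directly with both counting functions, correctly keeping the boundary term.
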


\begin{proof}
    We first show $\mathcal{M}_{\A} = \mathcal{M}_{\A'}$. By enumerating $\A = \{a_1 < a_2 < \ldots\}$ and setting $\theta(n) := \psi(a_n)$, we see that it suffices to show this for the case where $\A = \N$, which implies that $\A'$ has positive lower density (in $\mathbb{N}$).
    Clearly, since $\A' \subseteq \N$, we have $\mathcal{M}_{\A'} \subseteq \mathcal{M}$.
    On the other hand, a straightforward computation using summation by parts and the positive lower density shows that for $\psi \in \mathcal{M}$, we have $\sum_{n \in \A'} \psi(n) = \infty$, proving $\mathcal{M}_{\A} = \mathcal{M}_{\A'}$.
    Now fix $\psi \in \mathcal{M}_{\A} = \mathcal{M}_{\A'}$. By definition, using only $\A' \subseteq \A$, we get
    \[\begin{split}T_{\A'}(\psi,\alpha) &= \{\gamma \in [0,1): \lVert n\alpha + \gamma \rVert \leq \psi(n) \text{ for infinitely many } n \in \A'\} \\&\subseteq \{\gamma \in [0,1): \lVert n\alpha + \gamma \rVert \leq \psi(n) \text{ for infinitely many } n \in \A\} = T_{\A}(\psi,\alpha).\end{split}\]
    Since by definition, $K_{\A}(\psi) := \{\alpha \in [0,1): \lambda(T_{\A}(\psi,\alpha)) = 1\}$, this immediately proves
    $K_{\A'}(\psi) \subseteq K_{\A}(\psi)$.
\end{proof}

Having Proposition \ref{pos_rel_dens_suff} established, we now recall some classical properties of the sets $\bS_2, \mathbb{L}, \bS_2 \cap \mathbb{L}$.
Defining $N_{a,b} := \{N \in \mathbb{N}: p \mid N \implies p \equiv a \pmod b\}$, we have (see e.g. \cite[Proposition 6.2]{waldschmidt}):

\begin{itemize}
    \item $\bS_2 = \{n \in \mathbb{N}: n = 2^k N_{3,4}^2 N_{1,4}\}$.
    \item $\mathbb{L} =  \{n \in \mathbb{N}: n = 3^k N_{2,3}^2 N_{1,3}\}$.
    \item $\mathbb{I} := \mathbb{L} \cap \bS_2 = \{n \in \mathbb{N}: n = (2^m3^k N_{5,12}N_{7,12}N_{11,12})^2 N_{1,12}\}$.
\end{itemize}
In turn, we define
    \[\bS_2' := N_{1,4} \subset \bS_2, \quad \mathbb{L}' :=  N_{1,3} \subset \mathbb{L}, \quad \mathbb{I}' = N_{1,12} \subset \mathbb{I}.\]
We remark that these sets have themselves a meaning as primitive representations, e.g. $\bS_2'$ being the set of all integers that can be represented as the sum of two coprime odd integers, and similar characterizations hold for $\mathbb{L}', \mathbb{I}'$. We do not pursue this further, but recall the following properties that follow standard considerations in analytic number theory.

\begin{lem}\label{lem_densities}
    Let all quantities be defined as above. Then we have the following:
\begin{itemize}[noitemsep, topsep=0pt, parsep=0pt, partopsep=0pt]
    \item[(i)] For any $N \in \mathbb{N}$, we have 
    \begin{align}
       \label{sieve_s2}\bS_2' \cap [0,N] &= \left\{n \leq N: n \equiv 1 \pmod 4, p \nmid n \quad \forall p \equiv 3 \pmod 4 \text{ where } p \leq \sqrt{N}\right\}\\
       \label{sieve_l}\mathbb{L}' \cap [0,N] &= \left\{n \leq N: n \equiv 2 \pmod 3, p \nmid n \quad \forall p \equiv 2 \pmod 3 \text{ where } p \leq \sqrt{N}\right\}\\
    \label{sieve_i}\mathbb{I}' \cap [0,N] &= \left\{n \leq N: n \equiv 1 \pmod {12}, p \nmid n \quad \forall p \equiv 5,7,12 \pmod {12} \text{ where } p \leq \sqrt{N}\right\}.
    \end{align}
    
    \item[(ii)] There exist constants $c_1,c_2,c_3,c_1',c_2',c_3' > 0$ such that 
    \begin{align}\label{asymp_s2}\bS_2 \cap [0,N] &\sim c_1 \frac{N}{\sqrt{\log N}},\quad \mathbb{L} \cap [0,N] \sim c_2 \frac{N}{\sqrt{(\log N)}}, \quad
\mathbb{I} \cap [0,N] \sim c_3 \frac{N}{(\log N)^{3/4}},\\
\bS_2' \cap [0,N] &\sim c_1' \frac{N}{\sqrt{\log N}},\quad \mathbb{L}' \cap [0,N] \sim c_2' \frac{N}{\sqrt{(\log N)}}, \quad
\mathbb{I}' \cap [0,N] \sim c_3' \frac{N}{(\log N)^{3/4}}.\label{asymp_s2'}
\end{align}
\end{itemize}
\end{lem}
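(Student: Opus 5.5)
For part (i), I would argue directly from the multiplicative structure of $(\mathbb{Z}/b\mathbb{Z})^\times$, where $b \in \{4,3,12\}$. I take the congruence conditions as $n \equiv 1 \pmod 4$, $n \equiv 1 \pmod 3$ and $n\equiv 1 \pmod{12}$ respectively. The printed ``$2 \pmod 3$'' for $\mathbb{L}'$ and the residue ``$12$'' in \eqref{sieve_i} look like typos for $1 \pmod 3$ and $11$, since $\mathbb{L}' = N_{1,3}$.

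The inclusion ``$\subseteq$'' is trivial. A product of primes $\equiv 1 \pmod b$ is itself $\equiv 1 \pmod b$, and by definition it has no prime factor from the excluded classes at any size.

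For ``$\supseteq$'', take $n \le N$ satisfying the right-hand conditions. The congruence $n \equiv 1 \pmod b$ already excludes the primes dividing $b$. So every prime factor of $n$ lying in a ``bad'' class (i.e. $\not\equiv 1 \pmod b$) exceeds $\sqrt N$. Since $n\le N$, at most one such prime factor can occur, counted with multiplicity. If exactly one bad prime $p$ occurs, write $n = p m$ with $m \equiv 1 \pmod b$; then $n \equiv p \not\equiv 1 \pmod b$, which contradicts the congruence condition. Hence no bad prime divides $n$, and so $n\in N_{1,b}$.

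The point to note is that one bad prime already changes the residue class. This holds because all residues mod $4$, $3$, $12$ square to $1$, but here only the single-prime case is needed.

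For part (ii), I would invoke the Selberg--Delange method, or equivalently Wirsing's theorem, for the multiplicative indicator functions involved. For $\mathbf 1_{N_{1,b}}$, the prime sum satisfies
\[
\sum_{p\le x}\mathbf 1_{N_{1,b}}(p)\frac{\log p}{p} = \kappa\log x + O(1)
\]
by Dirichlet's theorem in arithmetic progressions, with $\kappa = 1/\varphi(b)$, i.e. $\kappa = 1/2,\,1/2,\,1/4$. Wirsing's theorem then gives $\#(N_{1,b}\cap[0,N]) \sim c\, N(\log N)^{\kappa-1}$, and the constant $c>0$ is given by an explicit convergent Euler product. This yields \eqref{asymp_s2'}.

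For $\bS_2,\mathbb{L},\mathbb{I}$ the indicator functions are again multiplicative. Their values on primes agree with those of $\mathbf 1_{N_{1,b}}$, except at the finitely many primes dividing $b$. The extra prime powers (squares of bad primes, and powers of $2$ or $3$) only modify convergent local factors. So the same theorem gives \eqref{asymp_s2} with the same exponents $\kappa-1$ and different positive constants. For $\bS_2$ this is precisely Landau's theorem.

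Alternatively, I would write the Dirichlet series as $\zeta(s)^{\kappa}G(s)$, with $G$ expressed through powers of $L(s,\chi)$ for the characters $\chi$ mod $b$ times a factor holomorphic in $\Re s>1/2$, and apply Selberg--Delange. The only step requiring care is checking the Selberg--Delange hypotheses: this needs the zero-free region and non-vanishing of $L(1,\chi)$. These are standard, and Wirsing's theorem sidesteps them entirely, so I expect no genuine obstacle there. The main thing to get right is part (i), which is what feeds into (I) and (III).
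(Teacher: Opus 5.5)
Your part (i) is the paper's argument. Any prime factor of $n\le N$ from an excluded class must exceed $\sqrt N$, so at most one such factor can occur. A single one is incompatible with $n\equiv 1 \pmod b$; the paper phrases this as ``a second excluded prime $p'$ would be forced, and then $p'\le\sqrt N$''. You are also right about the two misprints. For $\mathbb{L}'=N_{1,3}$ the condition must be $n\equiv 1 \pmod 3$: with $2 \pmod 3$ the identity already fails at $n=N=5$. The residue $12$ should be $11$, which matches the paper's later use of $\mathcal{P}=\{p\equiv 5,7,11 \pmod{12}\}$.

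For (ii) your route differs somewhat from the paper's. The paper quotes Selberg--Delange (via Waldschmidt) only for $\bS_2,\mathbb{L},\mathbb{I}$. It then passes to the primed sets through the ratio of Dirichlet series, e.g. $F_{\bS_2}(s)/F_{\bS_2'}(s)=(1-2^{-s})^{-1}\prod_{p\equiv 3 \pmod 4}(1-p^{-2s})^{-1}$, which is holomorphic and non-vanishing for $\Re s>1/2$. Made rigorous, this is a convolution $\mathbf{1}_{\bS_2'}=\mathbf{1}_{\bS_2}*h$, where $h$ has an absolutely convergent Dirichlet series for $\Re s>1/2$, so the asymptotic transfers with $c_1'=c_1\sum_d h(d)/d$. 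You instead apply one mean-value theorem directly to all six multiplicative indicators. Your route is uniform and avoids the transfer step, which the paper only sketches. The paper's route reuses Landau's theorem and its analogues and gives the ratio $c_1/c_1'$ explicitly.

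One correction to the input you cite. Wirsing's theorem in its standard form assumes $\sum_{p\le x}f(p)\log p\sim\kappa x$. The Mertens-type estimate $\sum_{p\le x}f(p)\frac{\log p}{p}=\kappa\log x+O(1)$ that you take from Dirichlet's theorem is weaker. For $f=\mathbf{1}_{N_{1,b}}$ the required hypothesis is the prime number theorem in progressions to the fixed modulus $b$, which is classical, so the argument survives once you cite it. It also means Wirsing's theorem does not avoid $L$-function input entirely. It still needs $L(1+it,\chi)\neq 0$, and even the Mertens form needs $L(1,\chi)\neq0$. What it avoids is the quantitative zero-free region and the continuation to the left of $\Re s=1$ used in the Selberg--Delange contour argument.
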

\begin{proof}
\item
\begin{itemize}
    \item[(i)]
    This is explained in \cite{ramare}, and builds on the parity condition (see \cite[Section 14.4]{opera}), but we provide the proof for $\bS_2'$ here for completeness; the proofs for $\mathbb{L}'$ and $\mathbb{I}'$ work analogously.
    By definition, we have that the left-hand side of \eqref{sieve_s2} is contained in the right-hand side. Thus it suffices to show that if $n \leq N$ and $n \equiv 1 \pmod 4, p \nmid n \forall p \equiv 3 \pmod 4$ where $p \leq \sqrt{N}$, then $n \in \bS_2'$, that is, we also have $p \nmid n \forall p \equiv 3 \pmod 4$ where $\sqrt{N} < p \leq N$.
    Assuming by contradiction that there is such a $\sqrt{N} < p \leq N$ with $p \mid n$, then since $p \equiv 3 \pmod 4$ and $n \equiv 1 \pmod 4$, there must be $p' \equiv 3 \pmod 4$ such that $pp' \mid n$. Since $p > \sqrt{N}$, we have $p' \leq \sqrt{N}$ with $p'\mid n$ and $p' \equiv 3 \pmod 4$, a contradiction.
    \item[(ii)]
    This can be proven by the Lagrange--Selberg--Delange method (see \cite[Proposition 6.1]{waldschmidt} for how to obtain \eqref{asymp_s2}). The same proof could be applied to obtain \eqref{asymp_s2'}; an even more direct proof with \eqref{asymp_s2} established is to observe that for the Dirichlet series $F_{\A}(s) := \sum_{n \in \A} \frac{1}{n^s}$, we have that for $Re(s) > 1/2$ that

    \[\frac{F_{\bS_2}(s)}{F_{\bS_2'}(s)} = \left(\frac{1}{1 - 2^{-s}}\right)\prod_{p \equiv 3 \pmod 4}\left(\frac{1}{1 - p^{-2s}}\right).\]
    Thus the limit $s \to 1^+$ is completely unproblematic, showing
    \[c_1/c_1' = \left(\frac{1}{1 - 2^{-1}}\right)\prod_{p \equiv 3 \pmod 4}\left(\frac{1}{1 - p^{-2}}\right).\]
    The same argument can be adapted straightforwardly to $\mathbb{L}',\mathbb{I}'$.
    \end{itemize}
\end{proof}
Note that $\bS_2',\mathbb{L}'\mathbb{I}'$ are already in the form that they satisfy (I) and (II) of Theorem \ref{thm_prime_like}, and the same holds true for $\mathbb{P}$ for trivial reasons. In order to establish (III), we make use of exponential sum estimates due to Vaughan \cite{vaughan_primes} as well as the recent work of Ramar\'e and Viswanadham \cite{ramare}.

\begin{lemma}\label{sum_expsum}
     Let $R,N > 0$ and $\xi > 0$ be arbitrary. Let
     $a,q \in \mathbb{N}$ such that $\gcd(a,q) = 1, |\alpha - a/q| \leq \tfrac{1}{q^2}, q \leq RN$. Let $\mathcal{P}$ be a set of primes and let
    $\cS_N := \{n \leq N: p \nmid n \forall p \in \mathcal{P}\cap [1,\sqrt{N}]\}$.
   Then \begin{equation}\label{ramare_exp_gen}\sum_{r \leq R}\left\lvert \sum_{\substack{n \in \cS_N}}e(rn\alpha)\right\rvert
\ll_{\xi} R^ 2 + RN\left(\frac{1}{\sqrt{q}} + \frac{\sqrt{q}}{\sqrt{RN}} + \frac{1}{R^{1/3}N^{1/6}} + \frac{R^{1/3}}{N^{1/3}}\right)N^{\xi}.
\end{equation}
\end{lemma}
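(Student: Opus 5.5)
The plan is to reproduce, for the general sifted set $\cS_N$, the Vaughan-type argument that Ramar\'e and Viswanadham \cite{ramare} use. First, I absorb the absolute values by unimodular weights $c_r$, so the task is to bound
\[\Sigma:=\sum_{r\leq R}c_r\sum_{n\in\cS_N}e(rn\alpha).\]
Write $P:=\prod_{p\in\p,\,p\leq\sqrt N}p$. Möbius inversion gives
\[\mathds{1}_{\cS_N}(n)=\sum_{d\mid (n,P)}\mu(d).\]
Fix a parameter $U\leq\sqrt N$, to be optimized at the end, and split the $d$-sum into $d\leq U$ and $d>U$.

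\emph{Type I part ($d\leq U$).} Interchanging summations and summing the geometric series in $m\leq N/d$ gives
\[\ll\sum_{r\leq R}\sum_{d\leq U}\min\Bigl(\frac{N}{d},\frac{1}{\lVert rd\alpha\rVert}\Bigr).\]
Setting $h=rd$, this is at most $\sum_{h\leq RU}\tau(h)\min\bigl(\tfrac{RN}{h},\lVert h\alpha\rVert^{-1}\bigr)$. The divisor function costs only $N^{\xi}$. I then apply the classical Vinogradov lemma for $\sum_{h\leq H}\min(X/h,\lVert h\alpha\rVert^{-1})$ with the rational approximation $a/q$, which yields a bound of the form
\[\Bigl(RU+\frac{RN}{q}+q\Bigr)N^{\xi}.\]

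\emph{Type II part ($d>U$).} Each squarefree $d\mid P$ with $d>U$ has a canonical factorisation $d=d_1d_2$. Here $d_1$ is the product of the smallest prime factors of $d$, taken in increasing order until the product first exceeds $U$; hence $U<d_1\leq U\sqrt N$. Every prime factor of $d_2$ exceeds $p^+(d_1)$. This produces a bilinear form
\[\sum_{r}c_r\sum_{m}a_m\sum_{k}b_k\,e(rmk\alpha),\qquad mk\leq N,\]
with $\lvert a_m\rvert,\lvert b_k\rvert\leq\tau$. The only coupling between the variables is the condition $p^{+}(d_1)<p^{-}(d_2)$, together with the range $mk\leq N$. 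I remove both by splitting into $O(\log^2 N)$ dyadic and prime-threshold pieces, or via a Perron-type separation; the resulting logarithmic losses go into $N^{\xi}$. I then combine $r$ and $m$ into $h=rm\leq RM$ (again with divisor-bounded multiplicity) and apply Cauchy--Schwarz in $h$. After opening the square, the inner sums are again of the form $\sum\min(\cdot,\lVert\cdot\,\alpha\rVert^{-1})$. This produces terms of size
\[RN\Bigl(\frac{1}{\sqrt q}+\sqrt{\frac{q}{RN}}+\frac{1}{\sqrt{RM}}+\sqrt{\frac{M}{N}}\Bigr)N^{\xi},\qquad M\in[U,N/U].\]
The worst ranges are the extreme values of $M$; the trivial $R^2$-type contribution comes from very small $h$.

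\emph{Optimization.} Choosing $U$ of the form $U\asymp R^{-2/3}N^{1/3}$, so that the type I term $RU$ and the type II terms at $M=U$ and $M=N/U$ balance, produces the terms $RN\cdot R^{-1/3}N^{-1/6}$ and $RN\cdot R^{1/3}N^{-1/3}$ in \eqref{ramare_exp_gen}. When $R$ is so large that this $U$ falls below $1$, I use the trivial bound, which gives the $R^2$ term.

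I expect the main obstacle to be the type II step. The coefficients must genuinely separate despite the condition on the smallest prime factor of $d_2$, and the divisor-type multiplicities created by merging $r$ with $m$ must stay bounded by $N^{\xi}$ rather than by a power of $R$. This is exactly where I would follow the Buchstab-type decomposition in \cite{ramare}, checking that nothing in it uses more about $\p$ than that it is a set of primes.
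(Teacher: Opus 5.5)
\textbf{There is a genuine gap in your type II step.} Your single-parameter decomposition cannot produce the exponents in \eqref{ramare_exp_gen}, so this is more than a detail to be filled in from \cite{ramare}.

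By your own construction, $d_1$ is only bounded by $U\,p^{+}(d_1)\le U\sqrt N$. Yet you evaluate the type II bound only for $M\in[U,N/U]$. Whenever $U>N^{1/4}$, the range $M\in(N/U,U\sqrt N]$ is nonempty. It comes from those $d$ whose greedy block $d_1$ ends with a prime of $\p$ larger than $N/U^2$. In that range your Cauchy--Schwarz bound is $RN\sqrt{M/N}$. If you instead merge $r$ with the short variable, it is $RN\bigl(\sqrt{M/(RN)}+M^{-1/2}\bigr)$.

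Optimising $U$ against the true range $(U,U\sqrt N]$ gives at $R=1$ only a saving of $\max(U^{-1/2},U^{1/2}N^{-1/4})\ge N^{-1/8}$. The lemma requires $N^{-1/6}$.

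Your optimisation arithmetic is also wrong. With $U\asymp R^{-2/3}N^{1/3}$ one gets $1/\sqrt{RU}=R^{-1/6}N^{-1/6}$ and $1/\sqrt U=R^{1/3}N^{-1/6}$. Neither is a term of \eqref{ramare_exp_gen}. Moreover, the term $R^{1/3}N^{-1/3}$ has no source anywhere in your argument.

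\textbf{The missing idea is a second parameter $z$ that separates the large sifting primes.} This is what the sieve identity \cite[Theorem 1.4]{ramare} provides in the paper.
\begin{itemize}
\item You Möbius-expand only over $d\mid \p(z)$, i.e.\ over primes of $\p$ below $z$.
\item The part $d\le M$ is type I and gives $(RN/q+RM+q)N^{\xi}$. Your treatment of this part is fine.
\item For $d>M$ the greedy factorisation now gives $M<\ell\le Mz$. The resulting bilinear form contributes $RN\bigl(\ldots+\sqrt{Mz/N}+1/\sqrt{RM}\bigr)N^{\xi}$.
\item The primes $p\in\p\cap[z,\sqrt N]$ are pulled out as a separate bilinear sum $\sum_{mp\approx N}\rho(m)e(mpr\alpha)$ with a prime variable. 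The variant of Harman's lemma \cite[Lemma 10.3]{ramare} bounds it by $RN\bigl(\ldots+N^{-1/4}+1/\sqrt{Rz}\bigr)N^{\xi}$.
\item The identity carries an error $O(RN/z)$.
\end{itemize}

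Now choose $M=z=(N/R)^{1/3}$; note the exponent of $R$ is $-1/3$, not $-2/3$. Then $1/\sqrt{RM}$, $\sqrt{Mz/N}$ and $1/\sqrt{Rz}$ all equal $R^{-1/3}N^{-1/6}$. The term $R^{1/3}N^{-1/3}$ is exactly the $1/z$ error. The $N^{-1/4}$ term is dominated by one of these two, since their geometric mean is $N^{-1/4}$. The trivial bound $RN\le R^2$ handles $R>N$.

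Deferring to ``the Buchstab-type decomposition in \cite{ramare}'' at the end does not rescue the proposal. The decomposition you actually wrote down lacks precisely this large-prime separation.
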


\begin{proof}
The proof of Lemma \ref{sum_expsum} is a straightforward generalization of \cite[Theorem 1.8]{ramare}, where \eqref{ramare_exp_gen} was shown in the case $\mathcal{P} = \{p \in \mathbb{P}: p \equiv 3 \pmod 4\}$. For completeness, we provide a proof for the general case, which follows the argument of \cite{ramare} verbatim.
Let us define the notation of $n \approx N$ meaning $n \in (N/2,N]$.
    By a sieve identity \cite[Theorem 1.4]{ramare}, we obtain for $n \approx N, r \approx R$, and any values $2 \leq z \leq \sqrt{N}$ and
    $M \geq 2$ optimized later that
    
    \[\begin{split}\sum_{r \approx R}\left\lvert \sum_{\substack{n \in \cS_N\\n \approx N}}e(rn\alpha)\right\rvert
    &\leq \sum_{r \approx R}\left\lvert\sum_{\substack{d \leq M\\d \mid \p(z)}}\mu(d)\sum_{\substack{n \equiv 0 \pmod d\\n \approx N}} e(rn\alpha)\right\rvert
    + \sum_{r \approx R}\left\lvert\sum_{\substack{mp \approx N\\z \leq p \leq \sqrt{N}\\p \in \p}}\rho(m)e(mpr\alpha) \right\rvert\\&+
    \sum_{r \approx R}\int_{0}^1 \left\lvert \sum_{\substack{k\ell \approx N\\M \leq \ell \leq Mz}}\alpha_{\ell}(t)\beta_k(t)e(k\ell r\alpha)\right\rvert \,\mathrm{d}t
     + O(RN/z),
     \end{split}
    \]
    where $\rho(m) \in [0,1]$, $|\alpha_{\ell}(t)| \leq 1, |\beta_k(t)| \leq N^{o(1)}$.\\

    An application of \cite[Lemma 10.3]{ramare}, which is a variant of Harman's \cite[Lemma 4]{harm_primes}, shows
    (after localizing dyadically and summing over dyadic ranges) that for any $\xi > 0$,

    \[\sum_{r \approx R}\Big\lvert \sum_{\substack{mp \approx N\\z \leq p \leq \sqrt{N}\\p \in \p}}\rho(m)e(mpr\alpha) \Big\rvert
    \ll_{\xi} NR\left(\frac{1}{\sqrt{q}} + \frac{\sqrt{q}}{\sqrt{RN}} + \frac{1}{N^{1/4}} + \frac{1}{\sqrt{Rz}}\right)N^{\xi}.
    \]
    Employing the same routine again, we get

    \[\sum_{r \approx R}\int_{0}^1 \Big\lvert\sum_{\substack{k\ell \approx N\\M \leq \ell \leq Mz}}\alpha_{\ell}(t)\beta_k(t)e(k\ell r\alpha)\Big\rvert \,\mathrm{d}t
    \ll_{\xi} NR\left(\frac{1}{\sqrt{q}} + \frac{\sqrt{q}}{\sqrt{RN}} + \frac{\sqrt{Mz}}{\sqrt{N}} + \frac{1}{\sqrt{RM}}\right)N^{\xi}.
    \]
    Applying \cite[Lemma 3]{vaughan_primes} (see also \cite[Lemma 10.1]{ramare}), we get after combining $(r,d)$ into one variable $u = rd$ that
    \[\Big\vert\sum_{r \approx R}\sum_{\substack{d \leq M\\d \mid \p(z)}}\mu(d)\sum_{\substack{n \equiv 0 \pmod d\\n \approx N}} e(rn\alpha)\Big\vert \ll_{\xi} \left(\frac{NR}{q} + RM + q\right)N^{\xi}.
    \]
We combine the above estimates and choose $M = z = (N/R)^{1/3}$, which gives for $q \leq NR$ the simplified estimate

\[\sum_{r \approx R}\left\lvert \sum_{\substack{n \in \cS_N\\n \approx N}}e(rn\alpha)\right\rvert
\ll_{\xi} RN\left(\frac{1}{\sqrt{q}} + \frac{\sqrt{q}}{\sqrt{RN}} + \frac{1}{R^{1/3}N^{1/6}} + \frac{R^{1/3}}{N^{1/3}}\right)N^{\xi},
\]
provided $N > R$. If $R > N$, then a trivial estimate gives 
\[\sum_{r \leq R}\left\lvert \sum_{\substack{n \in \cS_N}}e(rn\alpha)\right\rvert \leq R^2,\]
which concludes the proof.
\end{proof}

By a standard routine, we can now deduce (III) in all four cases:

\begin{cor}[Discrepancy estimates]\label{discr}
Let $\A = \mathbb{P}, \bS_2',\mathbb{L}',\mathbb{I'}$. 
Then for any $\alpha \in BAD$, we have (III) with $\varepsilon = \frac{1}{6} - \xi$ for any $\xi > 0$.
\end{cor}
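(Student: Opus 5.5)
We derive (III) from Lemma \ref{sum_expsum} through the Erd\H{o}s--Tur\'an inequality. For $\A = \bS_2',\mathbb{L}',\mathbb{I}'$, Lemma \ref{lem_densities}(i) identifies $\A\cap[0,x]$ with a set of the form $\cS_x$ from Lemma \ref{sum_expsum}, intersected with a fixed residue class modulo $4$, $3$ or $12$. For $\A=\mathbb{P}$ we take $\p=\mathbb{P}$, so that $\cS_x$ consists of $1$ together with the primes in $(\sqrt{x},x]$; the primes up to $\sqrt x$ contribute only $O(\sqrt x)$ to the discrepancy.

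We remove the congruence condition $n\equiv a\pmod b$ with additive characters:
\[\mathds{1}_{n\equiv a \,(b)}=\frac1b\sum_{s=0}^{b-1}e\!\left(\frac{s(n-a)}{b}\right).\]
This replaces $e(rn\alpha)$ by $e\bigl(n(r\alpha+s/b)\bigr)$. Since $b\le 12$ is fixed, we write $r\alpha+s/b=(rb\alpha+s)/b$ and simply run the argument of Lemma \ref{sum_expsum} with $\alpha$ replaced by $\alpha+s/(rb)$. Alternatively we note that the proof of Lemma \ref{sum_expsum} only uses a rational approximation $a/q$ to the frequency; $\alpha'=b\alpha$ is still badly approximable, and shifting by $s/b$ changes the approximating denominators by at most a factor $b$. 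This bookkeeping is routine, and we expect it to be the only mildly delicate point.

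By Erd\H{o}s--Tur\'an, for every $R\ge1$,
\[\sup_{a<b}\Bigl|\#\{n\in\A\cap[0,x]:\{n\alpha\}\in[a,b]\}-(b-a)\#(\A\cap[0,x])\Bigr|\ll \frac{x}{R}+\sum_{r\le R}\frac1r\Bigl|\sum_{n\in\A\cap[0,x]}e(rn\alpha)\Bigr|.\]
We split the $r$-sum dyadically into ranges $r\approx R'$ with $R'\le R$. On each range, $1/r\asymp 1/R'$, and we bound the sum by Lemma \ref{sum_expsum} with $(R',N=x)$. This step needs a good approximation $a/q$ with $|\alpha-a/q|\le q^{-2}$, $q\le R'x$, and we choose it as follows. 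Because $\alpha\in BAD$, consecutive convergent denominators satisfy $q_{n+1}\ll_\alpha q_n$, so a convergent exists at every scale. We pick $q\asymp\sqrt{R'x}$, which balances the terms $1/\sqrt q$ and $\sqrt q/\sqrt{R'x}$ in \eqref{ramare_exp_gen}. Each dyadic range then contributes
\[\ll_\xi \frac{1}{R'}\Bigl(R'^2+R'x\bigl((R'x)^{-1/4}+R'^{-1/3}x^{-1/6}+R'^{1/3}x^{-1/3}\bigr)x^{\xi}\Bigr).\]

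We now take $R=x^{1/6}$. Then $R'^2/R'\le x^{1/6}$, and $x\,(R'x)^{-1/4}\le x^{3/4}$. For the remaining two terms, $x R'^{-1/3}x^{-1/6}\le x^{5/6}$ and $x R'^{1/3}x^{-1/3}\le x^{2/3+1/18}\le x^{5/6}$. There are $O(\log x)$ dyadic ranges and $x/R=x^{5/6}$, so in total the discrepancy is $\ll_{\alpha,\xi} x^{5/6+2\xi}$. Renaming $\xi$ gives (III) with $\varepsilon=\frac16-\xi$.
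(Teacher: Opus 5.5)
This is essentially the paper's argument: the Erd\H{o}s--Tur\'an inequality combined with Lemma \ref{sum_expsum}, with $\alpha\in BAD$ used to pick a convergent denominator $q\asymp\sqrt{RN}$. Your dyadic splitting up to $R=x^{1/6}$, in place of the paper's partial summation up to $T=N^{1/4}$, is cosmetic, and your arithmetic giving $x^{5/6+2\xi}$ is correct.

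Your explicit removal of the residue condition $n\equiv a\pmod b$ for $\mathbb{S}_2',\mathbb{L}',\mathbb{I}'$ is a worthwhile addition, since the paper passes over it silently. However, ``replace $\alpha$ by $\alpha+s/(rb)$'' is not legitimate inside the lemma's sum over $r$. The clean fix is to apply the lemma with $R=1$ separately to each frequency $\beta_r=r\alpha+s/b$. Since $\lVert q\beta_r\rVert\ge\lVert qbr\alpha\rVert/b\gg_\alpha 1/(qrb^2)$, Dirichlet's theorem with $Q=x^{1/2}$ yields a denominator $q\le x^{1/2}$ with $\lvert\beta_r-a/q\rvert\le q^{-2}$ and $q\gg_\alpha x^{1/3}$ for $r\le x^{1/6}$. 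This gives $\ll x^{5/6+\xi}$ per $r$, hence $\ll x^{5/6+\xi}\log x$ after weighting by $1/r$.
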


\begin{proof}
We start with the case where $\A = \mathbb{P}$, which would already follow from the estimates obtained by Vaughan \cite{vaughan_primes}. Note that  
for $\mathcal{P} = \mathbb{P}$, we get
$\cS_N = \mathbb{P}\cap [\sqrt{N},N]$, and thus
$\#((\mathbb{P}\cap [0,N])\setminus \cS_N) = O(\sqrt{N})$.
Consequently, 
\[\begin{split}&\sup_{0 < a < b < 1} \left\vert\#\{n \in \mathbb{P} \cap [0,N]: \{n\alpha\} \in [a,b]\} - (b-a)\cdot \#\{n \in \mathbb{P} \cap [0,N]\}\right\rvert
\leq \\&\sup_{0 < a < b < 1} \left\vert\#\{n \in \cS_N: \{n\alpha\} \in [a,b]\} - (b-a)\cdot \#\{n \in \cS_N\}\right\rvert + O(\sqrt{N}).
\end{split}
\]
By the Erd\H{o}s--Turan inequality, we get for any $T \in \mathbb{N}$,
\[ \begin{split}\sup_{0 < a < b < 1} &\left\vert\#\{n \in \mathbb{P} \cap [0,N]: \{n\alpha\} \in [a,b]\} - (b-a) \cdot \#\{n \in \mathbb{P} \cap [0,N]\}\right\rvert
\\\ll &\sqrt{N} + \frac{N}{T} + \sum_{1\leq r \leq T}\frac{1}{r}\left\lvert \sum_{\substack{n \in \cS_N}}e(rn\alpha)\right\rvert.
\end{split}
\]
Since $\alpha \in BAD$, we find $a,q$ with $|\alpha - \frac{a}{q}| \leq \frac{1}{q^2}$ at any scale $q$.
Thus we may choose $q$ in Lemma \ref{sum_expsum} freely, giving for $q \asymp \sqrt{RN}$ and $R \leq N^{1/4}$ that
\begin{equation}\label{ram_exp}\sum_{r \leq R}\left\lvert \sum_{\substack{n \in \cS_N}}e(rn\alpha)\right\rvert \ll_{\xi}  N^{\xi}\left(
(RN)^{{3/4}} + N^{5/6} R^{2/3}\right).\end{equation}
Summation by parts and an application of \eqref{ram_exp} proves that for $T \leq N^{1/4}$, we get
\[ \sum_{1\leq r \leq T}\frac{1}{r}\left\lvert \sum_{\substack{n \in \cS_N}}e(rn\alpha)\right\rvert \ll_{\xi}  N^{5/6 + \xi}.
\]
We choose $T = N^{1/4}$, proving 

\[ \sup_{0 < a < b < 1} \left\vert\#\{p \leq N: \{p\alpha\} \in [a,b]\} - \#\{p \leq N\}\right\rvert
\ll_{\xi} N^{5/6 + \xi}.
\]

For $\A = \bS_2',\mathbb{L}',\mathbb{I}'$, we argue similarly. By making use of \eqref{sieve_s2} - \eqref{sieve_i}, we define
\[\mathcal{P} = \{p \equiv 3 \pmod 4\}, \quad \mathcal{P} = \{p \equiv 2 \pmod 3\}, \quad 
 \mathcal{P} = \{p \equiv 5,7,11 \pmod {12}\}\] respectively. This allows for another application of Lemma \ref{sum_expsum}, and we can conclude by the same arguments.
\end{proof}

\begin{proof}[Proof of Corollaries \ref{prime_cor} -- \ref{intersec_cor}]
    We start by proving Corollary \ref{prime_cor}. For this, we check that $\A = \mathbb{P}$ satisfies conditions (I),(II), and (III)
where $\p = \mathbb{P}$, $\rho = 1/2, \delta = 1$, and by Corollary \ref{discr}, $\varepsilon = \frac{1}{6}- \xi, \xi > 0$.
An application of Theorem \ref{thm_prime_like} thus shows
 \begin{equation*}
        BAD \subseteq \bigcap_{\psi \in \mathcal{M}_{\mathbb{P}}}\mathcal{K}_{\A}(\psi) \subseteq BAD_{\mathbb{P}}.
    \end{equation*}
    Since $\prod_{\substack{p \leq n\\p \in \mathbb{P}}}\left(1 + \frac{1}{p}\right) \asymp \log n$, Corollary \ref{prime_cor} follows.\\

    We next prove Corollary \ref{s2_cor}. For this, we check that $\bS_2'$ satisfies conditions (I),(II) and (III) with $\p = \{p \equiv 3 \pmod 4\}$.    
    Indeed, (I) follows from \eqref{sieve_s2} with $\rho = \tfrac{1}{2}$, (II) by Lemma \ref{lem_densities} with $\delta = \frac{1}{2}$
    and (III) by Corollary \ref{discr} with $\varepsilon = \frac{1}{6} - \xi$, $\xi > 0$.
    Thus applying Theorem \ref{thm_prime_like} proves
    $BAD \subseteq \bigcap_{\psi \in \mathcal{M}_{\bS_2'}}K_{\bS_2'}(\psi)$. By Lemma \ref{lem_densities}, the set $\bS_2'$ has positive relative density within $\bS_2$, thus Proposition \ref{pos_rel_dens_suff} implies
    \[BAD \subseteq \bigcap_{\psi \in \mathcal{M}_{\bS_2}}K_{\bS_2}(\psi).\]
    In order to show the converse for
    $\alpha$ satisfying $\liminf_{n \to \infty} n \lVert n \alpha\rVert \sqrt{\log n} = 0$, we use once more Lemma \ref{lem_densities}, and Proposition \ref{counterex_non_bad} with $f(x) = \sqrt{\log x}$.
    The proofs for Corollaries \ref{loesch_cor} and \ref{intersec_cor} work exactly in the same way, by only replacing $\p$ and $f$: For $\mathbb{L}'$, we use $\p = \{p \equiv 2 \pmod 3\}$ and $f(x) = \sqrt{\log x}$; for $\mathbb{I}'$, we use $\p = \{p \equiv 5,7,11 \pmod {12}\}$ and $f(x) = (\log x)^{3/4}$.
\end{proof}

\section{Proof of Theorem \ref{thm_pos_dens}}

Since $\A$ is a set of positive lower density, an application of Proposition \ref{counterex_non_bad} with $f(x) \equiv 1$, combined with the fact that $\mathcal{M}_{\A} = \mathcal{M}_{\N}$, proves that $\bigcap_{\psi \in \cM_{\A}}\mathcal{K}_{\A}(\psi) \subseteq BAD$, and it remains to show $BAD \subseteq \bigcap_{\psi \in \cM_{\A}}\mathcal{K}_{\A}(\psi)$.\\

To show the latter, let $\A$ be a set of positive lower density $\delta > 0$.
In view of Proposition \ref{wlog_psi}, we may assume \eqref{wlog_iii}. Further, we will assume that we have
\begin{equation}\label{dens_dyadic}d_k = \frac{\#\{2^{k} \leq n < 2^{k+1}: n \in \A\}}{2^k} \geq \frac{\delta}{2}\end{equation} for all $k$ sufficiently large. 
If this is not the case, we replace the basis $2$ by the basis $b := \max\{\lceil \frac{8}{\delta}\rceil,4\}$, and obtain that 
\[\frac{\#\{b^{k} \leq n < b^{k+1}: n \in \A\}}{b^{k+1}-b^{k}}
\geq \frac{\#\{n < b^{k+1}: n \in \A\}}{b^{k+1}} - \frac{1}{b-1} \geq \frac{\delta}{2} - \frac{2}{b} \geq \frac{\delta}{4}.
\]
By renaming $\delta' = \delta/4$ and replacing the basis $2$ in the definitions of $D_k,d_k,\mu_k,\psi_k$ with basis $b$, we can follow all estimates in exactly the same fashion. Thus for simplicity, we now assume for the remainder of this proof \eqref{dens_dyadic}. We aim to apply Lemma \ref{bdv_applied}, i.e., by making use of \eqref{wlog_pos}, the proof of Theorem \ref{thm_pos_dens} reduces to showing
\eqref{qia} and \eqref{equi}.\\

We start with proving \eqref{qia}:
By simply dropping the condition $i,j \in \A$, we have
\[\begin{split} \sum_{X \leq k,\ell \leq Y}\sum_{i \in D_k}\sum_{j \in D_{\ell}}\lambda(A_i \cap A_j)
&\leq 2\sum_{X \leq k \leq \ell \leq Y}\sum_{i \approx 2^k}\sum_{j \approx 2^{\ell}}\lambda(A_i \cap A_j)
\\&\ll \sum_{X \leq k \leq \ell \leq Y} \psi_{\ell} \sum_{i \approx 2^k}\#\{j \approx 2^{\ell}: \lVert (j -i)\alpha\rVert \leq 2\psi_k\},
\end{split}
\]
where we used the elementary estimate
\[\lambda(A_i \cap A_j) \ll \psi_{\ell}\mathds{1}_{[\lVert (j-i)\alpha\rVert \leq 2 \psi_k]}.\]
Since $\alpha \in BAD$, we have for any $t > 0$ that (recall \eqref{cardinality_bohr})

\[\#\{n \leq N: \lVert n \alpha \rVert \leq t\} \ll_{\alpha} Nt,\]
thus for $i \leq 2^{\ell}$, we get
\[\#\{j \approx 2^{\ell}: \lVert (j -i)\alpha\rVert \leq 2\psi_k\}
\ll_{\alpha} 2^{\ell}\psi_k.
\]
This implies 
\[\psi_{\ell} \sum_{i \approx 2^k}\#\{j \approx 2^{\ell}: \lVert (j -i)\alpha\rVert \leq 2\psi_k\}
\ll \psi_{\ell}\psi_k2^{\ell}2^{k} \ll_{\delta} \psi_{\ell}\psi_k\mu_{\ell}\mu_k,
\]
where we used \eqref{dens_dyadic}. This immediately shows \eqref{qia}, and we are left to prove \eqref{equi}.
We claim that this follows immediately from \eqref{dens_dyadic} and the assumption that $(n\alpha)_{n \in \A}$ is uniformly distributed: Indeed, let $\hat{\mu}_k := \#\{n < 2^k: n \in \A\}$. By assumption \eqref{dens_dyadic}, we get immediately
$\hat{\mu}_k \asymp_{\delta} \mu_k$.
We now observe that for any interval $I$, we have
\[\begin{split}\lambda(I) + o(1) &= \frac{\#\{n < 2^{k+1}: n \in \A, \{n\alpha\} \in I\}}{\mu_k + \hat{\mu}_k}
\\&=  \frac{\#\{n < 2^{k}: n \in \A, \{n\alpha\} \in I\}}{\mu_k+ \hat{\mu}_k} + \frac{\#\{2^k \leq n < 2^{k+1}: n \in \A, \{n\alpha\} \in I\}}{\mu_k+ \hat{\mu}_k}
\\&=  \frac{\lambda(I)\hat{\mu}_k + o(\hat{\mu}_k)}{\mu_k+ \hat{\mu}_k} + \frac{\#\{2^k \leq n < 2^{k+1}: n \in \A, \{n\alpha\} \in I\}}{\mu_k+ \hat{\mu}_k}, \quad k \to \infty.
\end{split}
\]
Rearranging and using $o(\hat{\mu}_k) = o_{\delta}(\mu_k)$, we get
\[\#\{2^k \leq n < 2^{k+1}: n \in \A, \{n\alpha\} \in I\} = \mu_k\lambda(I) + o_{\delta}(\mu_k),\] proving \eqref{equi}.
This proves Theorem \ref{thm_pos_dens} by an application of Lemma \ref{bdv_applied}.

\subsection{Proof of Corollary \ref{cor_pos_dens_theor}}
    We aim to show that 
    $(n\alpha)_{n \in \A_{a,f}}$ is uniformly distributed, which we show with Weyl's criterion, i.e., for every integer
    $h \geq 1$, we have for $\alpha \in BAD$ that
    \begin{equation}\label{exponential_sum}\sum_{\substack{n \leq N\\n \in \A_{a,f}}}e(hn\alpha) = o_h\left((\#[1,N] \cap \A_{a,f})\right) = o(N),\end{equation}
    where we used that $\mathcal{A}_{a,f}$ has positive lower density.
    Note that if $\alpha \in BAD$, then so is $h\alpha$, thus it suffices to show \eqref{exponential_sum} with $h = 1$.\\

Let us first consider the case of $a \neq 0$.
Since for $a \in \mu_m$ \[\sum_{j = 0}^{m-1} a^j = \begin{cases}
    m &\text{ if } a = 1,\\
    0 &\text{ otherwise},
\end{cases}\]
    we may write for $a \in \mu_m$
    \[\begin{split}\sum_{\substack{n \leq N\\n \in \A_{a,f}}}e(n\alpha)
    &=  \sum_{\substack{n \leq N}}e(n\alpha)\frac{1}{m}\sum_{j = 0}^{m-1}(f(n)\cdot a^{-1})^{j}
     \\&=  \frac{1}{m}\sum_{j = 0}^{m-1}a^{-j} \sum_{\substack{n \leq N}}e(n\alpha)f(n)^j.
     \end{split}
     \]  
    Note that $f_j(n) := f(n)^j$ is a multiplicative function satisfying $|f_j| \leq 1$.
    Thus we may appeal to a result of 
    Daboussi \cite{Daboussi} (see also the work of Montgomery--Vaughan \cite{mont_vau}) that in particular implies
    \[ \sum_{\substack{n \leq N}}e(n\alpha)f(n)^j = o(N), \quad 0 \leq j \leq m-1,\]
which immediately proves \eqref{exponential_sum}. This allows for an application of Theorem \ref{thm_pos_dens}, proving \eqref{full_kw}.
The case $a = 0$ is now immediate by
\[\sum_{\substack{n \leq N\\f(n) = 0}}e(n\alpha)
= \sum_{\substack{n \leq N}}e(n\alpha) - \sum_{a \in \mu_m}\sum_{\substack{n \leq N\\f(n) = a}}e(n\alpha) = o(N),
\]
by the preceding computation, and the trivial fact of $ \sum_{\substack{n \leq N}}e(n\alpha) = o(N)$.\\

We are left to prove that the situation applies to (a) - (d):
First, we prove that (a) - (d) are all of the form $\mathcal{A}_{a,f}$ for some $a,f$, which should be immediate to see: (a) follows from $f = \mu^2$ and $a = 1$, (b) and (c) from $f(n) := \xi_m^{\Omega(n)}$ respectively $f(n) = \xi_m^{\omega(n)}$, and (d) with $f(n) = \xi_m^n$ where $\xi_m = e(1/m)$. Thus in order to apply Theorem \ref{thm_pos_dens}, we need to show positive (lower) density for the respective set $\A_{a,f}$.
    It is a well-known fact that the set of square-free numbers has positive density, and the same is trivially true for (d). Further, it is probably standard for the sets in (b) and (c), but the author couldn't find a reference for this, so we provide a short proof.\\

    Using orthogonality relations as before, we have for $\hat{a} := e(a/m)$ that  \[\sum_{\substack{n \leq N\\\omega(n) \equiv a \pmod m}} 1 =  \frac{1}{m}\sum_{j = 0}^{m-1} \sum_{n \leq N} e\left(\frac{j(\omega(n) - a)}{m}\right)
    = \frac{N}{m} + O\left(\sum_{j = 1}^{m-1}\left\lvert  e\left(\frac{-j a}{m}\right)\right\rvert  \left\lvert \sum_{n \leq N} e\left(\frac{j\omega(n)}{m}\right)\right\rvert\right).
    \]
    Thus it suffices to show that for $j \not\equiv 0 \pmod m$,
    $ \sum_{n \leq N}  e\left(\frac{j\omega(n)}{m}\right) = o(N)$.
    Note that by the Prime Number Theorem, 
    \[\sum_{p \leq x}e\left(\frac{j\omega(p)}{m}\right)\log p = \xi_m^{j}\cdot x + O_A(x/(\log x)^A).\] 
We can then apply the Lagrange--Selberg--Delange method (see e.g. \cite[Theorem 13.2]{kouk_book}) to deduce
\[\sum_{n \leq N} e\left(\frac{j\omega(n)}{m}\right) \ll \frac{N}{(\log N)^{1 - Re(\xi_m^j)}}.\]
Since $\xi_m^j \neq 1$ for $j \not\equiv 0 \pmod m$, 
this shows in particular

\[\sum_{\substack{n \leq N\\\omega(n) \equiv a \pmod m}} 1 \sim \frac{N}{m},\quad  N \to \infty.
\]
Clearly, the same argument also holds for $\Omega(n)$ in place of $\omega(n)$.

\bibliographystyle{abbrv}
\bibliography{bibliography.bib}

\end{document}